
\documentclass[10pt]{amsart}
\usepackage{color}
\usepackage{amsthm}
\usepackage{enumerate}
\usepackage{dcpic,pictex} 
\usepackage{graphicx}

\newtheorem{teo}{Theorem}[section]
\newtheorem{theorem}{Theorem}[section]
\newtheorem{lemma}[teo]{Lemma}
\newtheorem{prop}[teo]{Proposition}
\newtheorem{cor}[teo]{Corollary}

\theoremstyle{definition}
\newtheorem{defi}[teo]{Definition}
\newtheorem{definition}[teo]{Definition}
\newtheorem{example}[teo]{Example}
\newtheorem{remark}[teo]{Remark}

\newcommand{\R}{\mathbb R}

\newcommand{\C}{\mathbb C}

\newcommand{\SF}{\mathbb S}

\DeclareMathOperator{\ext}{ext}
\DeclareMathOperator{\IIm}{Im}
\DeclareMathOperator{\RRe}{Re}
\DeclareMathOperator{\ddeg}{\widetilde{deg}}

\newcommand{\N}{\mathbb N}

\newcommand{\HH}{\mathbb H}
\newcommand{\HP}{\mathbb H\mathbb{P}^1}

\theoremstyle{remark} 

\numberwithin{equation}{section}

\setlength{\textheight}{20cm} \textwidth16cm \hoffset=-2truecm

\makeatletter 
\def\@tocline#1#2#3#4#5#6#7{\relax
  \ifnum #1>\c@tocdepth 
  \else
    \par \addpenalty\@secpenalty\addvspace{#2}%
    \begingroup \hyphenpenalty\@M
    \@ifempty{#4}{%
      \@tempdima\csname r@tocindent\number#1\endcsname\relax
    }{%
      \@tempdima#4\relax
    }%
    \parindent\z@ \leftskip#3\relax \advance\leftskip\@tempdima\relax
    \rightskip\@pnumwidth plus4em \parfillskip-\@pnumwidth
    #5\leavevmode\hskip-\@tempdima
      \ifcase #1
       \or\or \hskip 1em \or \hskip 2em \else \hskip 3em \fi%
      #6\nobreak\relax
    \dotfill\hbox to\@pnumwidth{\@tocpagenum{#7}}\par
    \nobreak
    \endgroup
  \fi}
\makeatother

\begin{document}

\title[Slice-Polynomial Functions and ruled surfaces]
{Slice-Polynomial Functions and Twistor Geometry of Ruled Surfaces in $\mathbb{CP}^3$}

\author[A. Altavilla]{A. Altavilla${}^{\dagger,\ddagger}$}\address{Altavilla Amedeo: Dipartimento Di Matematica, Universit\`a di Roma "Tor Vergata", Via Della Ricerca Scientifica 1, 00133, Roma, Italy} \email{altavilla@mat.uniroma2.it}

\author[G. Sarfatti]{G. Sarfatti ${}^{\dagger,\star}$}\address{Giulia Sarfatti: Dipartimento di Matematica e Informatica ``U. Dini'', Universit\`a di Firenze, Viale Morgagni 67/A, 50134 Firenze, Italy} \email{giulia.sarfatti@unifi.it }
\thanks{${}^{\dagger}$Partially supported by GNSAGA of INdAM, FIRB 2012 {\sl Geometria differenziale e teoria geometrica delle funzioni} and by SIR 2014 {\sl AnHyC - Analytic aspects in complex and hypercomplex geometry} n. RBSI14DYEB.   ${}^{\ddagger}$Partially supported by SIR grant {\sl NEWHOLITE - New methods in holomorphic iteration} n. RBSI14CFME. ${}^{\star}$The second author wishes to thank the Institut Montpelli\'erain Alexander Grothendieck where part of this project was carried out.}

\date{\today }

\subjclass[2010]{Primary 53C28, 30G35; secondary  32A30, 14J26}
\keywords{Twistor spaces, Slice regular functions, Functions of hypercomplex variables, Rational and ruled surfaces, Slice-polynomial functions}

\begin{abstract} 
In the present paper we introduce the class of slice-polynomial functions: slice regular functions {defined over the quaternions, outside the real axis,} whose restriction to
any complex half-plane is a polynomial. These functions naturally emerge in the twistor interpretation of
slice regularity introduced in~\cite{gensalsto} and developed in~\cite{AAtwistor}. 
To any slice-polynomial function $P$ we associate its {\em companion} $P^\vee$ and its {\em extension} to the real axis $P_\R$, that are
quaternionic functions naturally related to $P$. Then, using the theory 
of twistor spaces, we are able to show that for any quaternion $q$ the {cardinality of simultaneous} pre-images of $q$ via $P$, $P^\vee$
and $P_\R$ is generically constant, giving a notion of degree. 
With the brand new tool of slice-polynomial functions, we {compute} the twistor discriminant locus of a cubic scroll $\mathcal{C}$ in $\mathbb{CP}^3$ 
and we conclude by giving some qualitative results on the complex structures induced by $\mathcal{C}$ via the twistor projection.

\end{abstract}
\maketitle
\tableofcontents

\section{Introduction and motivation}

%

{A fascinating aspect of the recent theory of slice regular functions over the quaternions is that it proved to have a fruitful interaction with the theory of twistor geometry. As first stated in~\cite{gensalsto} and then developed in~\cite{AAtwistor}, one can describe the theory of slice regularity using the language
	of complex geometry and thus use this description to associate with any injective slice regular function an orthogonal complex structure (OCS in the sequel) defined on its image. 
	The relevance of this construction relies on the fact that the classification of orthogonal complex structures on general open domains in $\mathbb{\R}^4$ is an open problem, \cite{salamonviac}.
	In the present work we use this twistor interpretation to analyse a special family of slice regular functions defined outside the real axis, that we named
	\textit{slice-polynomial functions}. The interest in this class of functions is motivated by the fact that they furnish a new tool to describe the twistor geometry of a wide family of ruled surfaces in $\mathbb{CP}^3$. 
	Let us begin by introducing the general contexts where we will work in.}


\subsection{Orthogonal complex structures and twistor theory of $\mathbb{S}^{4}$}
An OCS on a Riemannian manifold $(M,g)$ is an (integrable) complex structure which is Hermitian, i.e., compatible with the metric $g$.
{For our purposes we will restrict our attention to orientation preserving OCS's.}
Two global OCS's $J_{1}$, $J_{2}$ are said to be \textit{independent}
if there exists some $x\in M$ such that $J_{1}(x)\neq\pm J_{2}(x)$. Moreover they are said to be 
\textit{strongly independent} if $J_{1}(x)\neq\pm J_{2}(x)$ for any $x\in M$ (see~\cite{apostolov, pontecorvo2, pontecorvo, salamonbrno}).
It is therefore a relevant question to understand if, given a Riemannian manifold $(M,g)$, it admits any OCS and, if this is the case, to describe the
set of all OCS's defined on it.
The appropriate tool to deal with this problem is the so called \textit{twistor space} $Z$ of $M$, which is defined as the total space of the bundle containing all the OCS's defined on $M$. This space contains informations useful not only to characterise the existence of such structures (see, for instance, \cite{burstall, salamonviac}), but also to study other interesting geometric problems as the existence of minimal
surfaces (see e.g,~\cite{bryant}).

Notice that, given any OCS $J$ on a Riemannian manifold $(M,g)$, if we replace the metric $g$ by a conformally equivalent one $e^fg$, the orthogonality of $J$ is not affected. Thus, the problem of 
finding an OCS on $(M,g)$ is more related to the conformal class of $g$ than to $g$ itself. 
From this perspective we are interested in conformal transformations of $Z$ i.e. automorphisms of $Z$ induced by conformal transformations of the base space $M$.

\begin{example}
For $n>1$  the sphere $\mathbb{S}^{2n}$ has no OCS $J$ relative to its standard metric,~\cite{lebrun}. However, let $x\in \mathbb{S}^{2n}$,
then, the induced metric on $\mathbb{S}^{2n}\setminus\{x\}$ is conformally equivalent to the flat metric on $\mathbb{R}^{2n}$ and the latter 
admits infinitely many OCS's  including the constant ones parameterised by the homogeneous space $O(2n)/U(n)$. In particular, $\SF^{4}\setminus \{\infty\}$ admits only constant OCS's, parameterised by $O(4)/U(2)$.
\end{example}

For the remainder of the paper, we restrict our attention to the case of the 4-sphere $\mathbb{S}^4$ which can be identified with the quaternionic
projective line $\mathbb{HP}^1$. 
The twistor space in this case is $\mathbb{CP}^3$ and the associated bundle structure is given by $\pi:\mathbb{CP}^{3}\rightarrow \mathbb{HP}^{1}$ with fibre $\mathbb{CP}^{1}$ (see Section~\ref{lift} for further details).
It is known (see, e.g., \cite[Section 2.6]{salamonviac}), that any complex surface  in $\mathbb{CP}^{3}$
transverse to the fibres of $\pi$
{induces} an OCS on subdomains of $\mathbb{R}^{4}$ whenever such a surface is a single valued 
graph (with respect to the twistor projection). Vice versa, any OCS 
on a domain $\Omega\subset\mathbb{S}^{4}$ corresponds to a holomorphic surface in $\mathbb{CP}^{3}$.
Since it is not possible to define any
OCS on the whole $\mathbb{S}^{4}$, then no such surface in 
$\mathbb{CP}^{3}$ can intersect every fibre of the twistor fibration in exactly one point.

In this context, conformal transformations of $\mathbb{CP}^3$  are described in, e.g.,~\cite[Section 2]{armstrong}. 
In a series of papers~\cite{armstrong, APS, AS, gensalsto, salamonviac}, the authors have studied algebraic surfaces in $\mathbb{CP}^{3}$ from this point of view. 
In particular, in~\cite{salamonviac} non-singular quadric surfaces are fully classified under conformal transformations, while in~\cite{armstrong, APS, AS} qualitative and quantitative results on the
twistor geometry of non-singular cubics are given. In the paper~\cite{gensalsto} the authors study a (singular) quartic scroll by means of slice regularity. All the results mentioned in the previous papers have inferred
the existence (and in some case the explicit construction), of OCS's over explicit open dense domains of $\mathbb{S}^{4}$.

\subsection{Relation between twistor theory and slice regularity}
{The definition of slice regular functions, due to G. Gentili and D. C. Struppa, is the
most recent attempt to generalise the notion of holomorphicity to the quaternionic setting
in order to include polynomials of the form
$\sum_{n=0}^{N}q^{n}a_{n}$,
 with quaternionic coefficients, see \cite{genstru}.
The main idea is the following:
if $\mathbb{S}$ is the two dimensional sphere of quaternionic imaginary units
$\mathbb{S}:=\{q\in\mathbb{H}\,|\, q^{2}=-1\}$,
then any $q\in\mathbb{H}$ can be written as $q=\alpha+I\beta$ for $\alpha,\beta\in\mathbb{R}$ and $I\in\mathbb{S}$. 
In this way we
obtain the \textit{slice} decomposition 
$
\mathbb{H}=\bigcup_{I\in\mathbb{S}}\mathbb{C}_{I}$, where $\mathbb{C}_{I}:=Span_{\mathbb{R}}(1,I)$.
Sets of the form $\C_{I}$ are called \textit{slices} while the half-planes $\C_{I}^{+}$, identified by positive complex
imaginary part, are called \textit{semi-slices}.
For any quaternion $q=\alpha+I\beta$, its real part is the real number $\RRe(q)=\alpha$, its imaginary part is the purely imaginary number $\IIm(q)=I\beta$ and its conjugate is $\bar q =\alpha -I\beta$, so that its modulus can be computed as $|q|^2=q\bar q$.

\begin{defi}
Let $\Omega\subset\mathbb{H}$ be a domain. A differentiable function $f:\Omega\to\mathbb{H}$ is said to be 
\textit{slice regular} if for all $I\in\SF$ the restriction $f|_{\mathbb{C}_{I}}:\C_{I}\to\HH$ is a holomorphic
function with respect to the complex structure defined by quaternionic left multiplication by $I$.
\end{defi}
%
%
Several results hold for this class of functions, in analogy with the complex case: just as some examples we can mention a Cauchy integral representation formula, a Maximum and Minimum Modulus Principle, an Open Mapping Theorem. On the contrary other aspects are very different, as the nature of zeros and poles that can be either
isolated points or isolated 2-dimensional spheres. See \cite{genstostru} and references therein for an extensive account to the theory and to \cite{AAproperties, AA} for some generalizations.

Many steps forward in the theory are due to the work of R. Ghiloni and A. Perotti and their idea of using \textit{stem functions}, \cite{ghiloniperotti}. The formalism of stem functions allows, in particular, to enlarge the class of domains on which the functions can be defined (including domains that do not intersect the real axis) and to extend the theory to the more general setting of alternative algebras.

}

Let us now describe the interplay between slice regular functions and OCS's. Consider the manifold $X=\mathbb{H}\setminus\mathbb{R}$, endowed with the Euclidean metric.
Using the language of quaternions, it is possible to define a non-constant OCS on X:  
consider a point $p=\alpha+I_p\beta\in X$, with $\beta>0$, and identify $T_pX\simeq \mathbb{H}$. Then we can define the 
 OCS $\mathbb{J}$ over $X$ as
 \begin{equation*}
  \mathbb{J}_{p}:T_pX \to T_pX, \quad 
   v \mapsto \frac{\IIm(p)}{|\IIm(p)|} v = I_p v,
 \end{equation*}
where $v$ is a tangent vector to $X$ in $p$ and $I_{p}v$
denotes the quaternionic multiplication between $I_{p}$ and $v$.
It is proven in~\cite{salamonviac} that $\pm\mathbb{J}$ are the only non-constant OCS's, up to conformal
transformation of $\mathbb{S}^{4}$, that can be defined on $\mathbb{H}\setminus\mathbb{R}$.

Slice regular functions enter in the picture as follows: 
first of all $(\HH\setminus\R, \mathbb{J})$ is biholomorphic to a suitable open subset $\mathcal{Q}^+$  of the \textit{Segre Quadric} $\mathcal{Q}\subset\mathbb{CP}^{3}$.
 Let now $f:\Omega\rightarrow\mathbb{H}$  be
 a slice regular function on a circular domain (i.e. a domain which is symmetric with respect to the real axis). Then $f$ admits a {\em twistor lift}  to $\pi^{-1}(\Omega\setminus\mathbb{R})\cap \mathcal{Q}^+$,
i.e.: there exists a holomorphic
 function $\tilde{f}:\pi^{-1}(\Omega\setminus\mathbb{R})\cap \mathcal{Q}^+\rightarrow \mathbb{CP}^3$, such that $\pi\circ \tilde{f}=f\circ\pi$  (see Section~\ref{lift} for the details).  
Suppose now that the function $f$ is also injective and let $p=\alpha+I_p\beta\in\Omega \setminus \R$. Then it is possible to define an OCS $\mathbb{J}^f$ on the image of $f$ as 
\begin{equation*}
\mathbb{J}^f_{f(p)}v=\frac{\IIm(p)}{|\IIm(p)|} v = I_p v
\end{equation*}
where $\mathbb{J}^f:=(df)\mathbb{J}(df)^{-1}$ denotes the push-forward of $\mathbb{J}$ via $f$.
See~\cite{AAtwistor, gensalsto}.
In~\cite{AAtwistor} it is given a first account on the family of algebraic surfaces in $\mathbb{CP}^{3}$
that can be parameterised by the twistor lift of a slice regular function. This family is composed by
surfaces \textit{ruled by lines} called also \textit{scrolls}. In particular, this family contains
all hyperplanes and all non-singular quadrics. Moreover, up to projective transformations, for any quadric surface and cubic scroll
 $\mathcal{S}\subset\mathbb{CP}^3$ 
there exists a slice regular function $f$ such that its 
 twistor lift $\tilde{f}$ has image in $\mathcal{S}$.

An issue of this construction consists in
the impossibility, in general, of extending the twistor lift of a slice regular function to the whole quadric $\mathcal{Q}$. See for instance ~\cite[Remark 17]{AAtwistor} for an explicit simple example.
This issue is solved in the present paper in which we start the study
of what we call \textit{slice-polynomial functions}.  
\begin{definition}
	A slice regular function $P:\HH\setminus \R \to \HH$ is a slice-polynomial function if, for any $I\in\SF$, the restriction of $P$ to the semi-slice $\C_I^+$ is a polynomial. 
\end{definition}
\noindent
(See Definition~\ref{slpol} and Proposition~\ref{eachslicepol} for the details). For this class of functions we ask ourselves and answer a natural basic question: \textit{does a Fundamental Theorem of Algebra hold
for slice-polynomial functions?} The answer is positive but not trivial. In fact, we need to
consider, together with a slice-polynomial function $P$, its \textit{companion} $P^\vee$, defined as a slice-polynomial function which is a sort of dual of $P$ (see Definition~\ref{defcompanion}), and its \textit{extension} to the real axis $P_\R$ (see Subsection~\ref{extensionsection}). 
Given any quaternion $q \in \HH$, using twistor theory, we are able to study the simultaneous pre-images of $q$ via a slice-polynomial function, its companion and its extension to the reals, see Corollary \ref{mainresult}.
In particular we get the following result.  
\begin{teo}\label{mainresultintro}
	Let $P$ be a slice-polynomial function, not slice-constant and let $q$ be any quaternion. Then, generically,
	\[\# \{P^{-1}(q)\}+\# \{{P^\vee}^{-1}(q)\}+\# \{P_\R^{-1}(q)\}=d	\] 
	where $d$ is the {\em twistor degree} of $P$.
\end{teo}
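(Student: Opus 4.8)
The plan is to read the three quantities $\#\{P^{-1}(q)\}$, $\#\{{P^\vee}^{-1}(q)\}$ and $\#\{P_\R^{-1}(q)\}$ as the three contributions to the number of points in which a single algebraic surface in $\mathbb{CP}^3$ meets a generic twistor fibre. First I would form the twistor lift $\tilde P$ of $P$ (Section~\ref{lift}), defined on $\mathcal Q^+$ and satisfying $\pi\circ\tilde P=P\circ\pi$, and let $\mathcal S\subset\mathbb{CP}^3$ be the Zariski closure of its image; since $P$ restricts to a polynomial on each semi-slice, $\mathcal S$ is an algebraic scroll. The \emph{twistor degree} $d$ is the number of points of $\mathcal S\cap\pi^{-1}(q)$ for generic $q$: because the twistor fibres form an algebraic family of projective lines sweeping out $\mathbb{CP}^3$, this intersection number is constant off a proper subvariety of $\mathbb{HP}^1$, and that subvariety is exactly the locus of non-generic $q$.

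The core step is to convert intersection points into pre-images. Recall that $\pi$ restricts to a biholomorphism $\mathcal Q^+\cong\HH\setminus\R$, so each fibre $\pi^{-1}(p)$, $p\in\HH\setminus\R$, meets $\mathcal Q^+$ in a single point $w_p$. Given $p\in P^{-1}(q)$ the point $\tilde P(w_p)$ lies in $\mathcal S\cap\pi^{-1}(q)$ because $\pi(\tilde P(w_p))=P(\pi(w_p))=P(p)=q$; conversely every intersection point of the form $\tilde P(w)$ with $w\in\mathcal Q^+$ projects to a pre-image of $q$. This yields a bijection between $P^{-1}(q)$ and the intersection points carried by the $\mathcal Q^+$-chart. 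Running the same argument for the companion $P^\vee$ (Definition~\ref{defcompanion}), whose lift parameterises the conjugate region $\mathcal Q^-$ attached to the opposite structure $-\mathbb J$, accounts for the points in $\mathcal Q^-$, while the extension $P_\R$ (Subsection~\ref{extensionsection}) accounts for the remaining intersection points, namely those sitting over the real axis, where the lift of $P$ itself cannot be prolonged (cf.~\cite[Remark~17]{AAtwistor}).

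The count then closes immediately: for $q$ outside the bad locus, $\pi^{-1}(q)$ is transverse to $\mathcal S$ and meets it in exactly $d$ distinct smooth points, each arising from precisely one of the three charts --- $\mathcal Q^+$ through $\tilde P$, $\mathcal Q^-$ through the lift of $P^\vee$, or the real locus through $P_\R$. Adding the three bijections gives
\[
\#\{P^{-1}(q)\}+\#\{{P^\vee}^{-1}(q)\}+\#\{P_\R^{-1}(q)\}=\#\bigl(\mathcal S\cap\pi^{-1}(q)\bigr)=d,
\]
and the hypothesis that $P$ is not slice-constant ensures $\mathcal S$ is a genuine surface so that this intersection is finite. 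This is the statement to be recorded more precisely as Corollary~\ref{mainresult}.

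I expect the real obstacle to lie in making the three-fold correspondence exact: proving that the lifts of $P$, $P^\vee$ and $P_\R$ together account for \emph{all} $d$ intersection points, with no double counting and each with multiplicity one. The delicate zone is the fibre locus over $\R$, precisely where the lift of $P$ degenerates; one must show that $P_\R$ recaptures exactly the intersection points that slip out of the $\mathcal Q^\pm$-charts there. In tandem, identifying the genuine generic set --- the complement of the twistor discriminant --- requires ruling out tangencies, collisions with the singular locus of $\mathcal S$, and the absorption of intersection points into the $2$-spheres of non-real zeros that are intrinsic to the quaternionic theory.
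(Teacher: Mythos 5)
Your skeleton --- split the count into the three charts $\mathcal{Q}^+$, $\mathcal{Q}_\R$, $\mathcal{Q}^-$, attributing them to $P$, $P_\R$ and $P^\vee$ respectively --- matches the paper's strategy (it is exactly the content of the three commuting diagrams in Section 4). But there is a genuine gap at the quantitative heart of your argument: you define the twistor degree $d$ as the generic cardinality of $\mathcal{S}\cap\pi^{-1}(q)$, i.e.\ as $\deg \mathcal{S}$, and you count pre-images by claiming a bijection with these intersection points. In the paper, $d=\ddeg(P)$ is instead the degree of the determinant $D_q(v)$ of \eqref{determinantA}, obtained by substituting the parameterisation of the extended lift into the fibre equations \eqref{eqfiber} to get the $2\times 2$ linear system \eqref{system} in $[s,u]$. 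These two numbers agree only when the extended lift $\mathcal{P}$ is generically injective: by Corollary~\ref{surf} the image surface has degree $d/k$, where $k$ is the generic degree of $\mathcal{P}$ onto its image. When $k>1$ your bijection fails, because several parameter solutions $([s,u],v)$ collapse onto a single point of $\mathcal{S}$, and the chart in which the \emph{image} point sits is unrelated to the half-plane in which the \emph{parameter} $v$ sits. A concrete counterexample: take $P(q)=q^2$ restricted to $\HH\setminus\R$ (a slice-polynomial function by Remark~\ref{obvdec}). Then $P^\vee=P$, the twistor degree is $2n=4$ (Remark~\ref{doubledegree}), and the extended lift $[s,u,sv^2,uv^2]$ has image the Segre quadric $\mathcal{Q}$ itself, of degree $2$: a generic fibre meets $\mathcal{S}$ in $2$ points, yet
\[
\# \{P^{-1}(q)\}+\# \{{P^\vee}^{-1}(q)\}+\# \{P_\R^{-1}(q)\}=2+2+0=4,
\]
since the two square roots of a generic $q$ are counted once for $P$ and once for $P^\vee$. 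So with your definition of $d$ the asserted identity is false, and the step ``$\sum=\#\bigl(\mathcal{S}\cap\pi^{-1}(q)\bigr)$'' cannot be repaired without introducing the factor $k$.

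The correct count must be carried out \emph{upstairs} on the parameter quadric, never passing to the image surface. Plugging $\mathcal{P}[s,u,sv,uv]=[s,u,sg(v)-u\hat{h}(v),sh(v)+u\hat{g}(v)]$ into \eqref{eqfiber} yields the linear system \eqref{system} with matrix $A_q(v)$; solvability in $[s,u]\in\mathbb{CP}^1$ forces $D_q(v)=0$, and for $q$ in a real Zariski-open set (where $\deg D_q=d$, $D_q\not\equiv 0$, and $A_q(v_k)\neq 0$ at every root) one gets exactly $d$ roots $v_1,\dots,v_d$, each carrying a \emph{unique} $[s_k,u_k]$ (Theorem~\ref{systempre-images}). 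Each root lies in exactly one of $\C^+$, $\R$, $\C^-$, hence --- via the biholomorphisms $\pi|_{\mathcal{Q}^\pm}$ and the extension of $\pi$ over $\mathcal{Q}_\R$ --- contributes exactly one pre-image to exactly one of $P$, $P_\R$, $P^\vee$ (Corollary~\ref{mainresult}). The mutual exclusivity and multiplicity-one issues you flag as ``the real obstacle'' are thus resolved for free by the trichotomy on $\IIm v$ and the uniqueness of $[s_k,u_k]$, not by transversality of the fibre to $\mathcal{S}$; likewise your worries about degenerate spheres and wings correspond precisely to the excluded cases $A_q(v_0)=0$ and $D_q\equiv 0$. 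Your argument never produces the resultant-style polynomial $D_q$, and without it neither the correct $d$ nor the exact count is accessible.
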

\noindent
In the previous statement {\em generically} means outside a real Zariski closed subset of $\HH$.
The twistor degree of a slice-polynomial function is a suitable notion of degree in this context (see Definition~\ref{twdeg}) and furthermore it is strictly related to the degree of the algebraic surface where the twistor lift of the slice-polynomial function lies (see Corollary \ref{surf}).
As a consequence of Corollary \ref{mainresult}, we prove the following version of the Fundamental Theorem of Algebra.
	\begin{teo}\label{unionintro}
	Let $P$ be a slice-polynomial function, not slice-constant. Then
	\[P(\HH\setminus \R)\cup P^\vee(\HH\setminus \R)\cup P_\R(\R)=\HH.\]
\end{teo}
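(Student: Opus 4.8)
The plan is to recast the assertion as a surjectivity statement for the twistor projection restricted to the algebraic surface carrying the lift of $P$, and to settle it by a single intersection-theoretic observation that treats generic and exceptional quaternions uniformly. Since $P$ is not slice-constant, its twistor lift $\tilde P$ is nondegenerate and its image spans a projective surface $S\subset\mathbb{CP}^3$ of positive degree $d$; this is the content of Corollary~\ref{surf}, and it gives $d\ge 1$. Recall that all three functions in the statement are read off from the same lift: writing $q=\alpha+I\beta$, the semi-slices with $\beta>0$ produce $P$, those with $\beta<0$ produce the companion $P^\vee$, and the real axis $\beta=0$ produces the extension $P_\R$, each as $\pi\circ\tilde P$ on the corresponding stratum of the Segre quadric $\mathcal{Q}$. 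Thus the three images together are exactly the set of finite quaternions hit by $\pi$ from $S$, and the theorem amounts to
\[
\HH\subseteq\pi(S).
\]

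The core step is then purely projective. For an arbitrary $q\in\HH$ the twistor fibre $\pi^{-1}(q)$ is a projective line $L_q\subset\mathbb{CP}^3$. Because $S$ is a surface of positive degree, the dimension theorem in $\mathbb{CP}^3$ forces $L_q\cap S\neq\emptyset$ (a curve and a surface meet, as $1+2\ge 3$); indeed $L_q$ meets $S$ in $d$ points counted with multiplicity, unless $L_q\subset S$. Picking any $z\in L_q\cap S$ we have $\pi(z)=q$ and $z\in S$, so $z$ records a preimage of $q$ under one of $P$, $P^\vee$, $P_\R$, according to whether $z$ lies over the stratum $\beta>0$, $\beta<0$ or $\beta=0$. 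Hence every $q\in\HH$, generic or not, lies in the union. On the dense set where Theorem~\ref{mainresultintro} applies this is consistent with, and refined by, the exact count $\#\{P^{-1}(q)\}+\#\{{P^\vee}^{-1}(q)\}+\#\{P_\R^{-1}(q)\}=d$, which already yields $q$ in the union for all $q$ outside a real Zariski-closed proper subset; the intersection argument is what extends this conclusion to the exceptional locus.

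The main obstacle I expect is the bookkeeping at the exceptional intersection points, namely checking that each $z\in L_q\cap S$ over a finite $q$ is an \emph{honest} preimage of one of the three maps rather than a spurious point of $S$. Three degenerations must be controlled: base points of the parametrization, intersection points lying over $\infty\in\mathbb{HP}^1$ (which contribute to $\pi(S)$ but not to $\HH$), and fibres with $L_q\subset S$, where $q$ acquires infinitely many preimages and one only needs nonemptiness. The delicate case is the limiting behaviour as $q$ approaches the Zariski-closed exceptional set: a preimage branch could a priori run off to the boundary stratum of $\mathcal{Q}$ or to the fibre over $\infty$. It is precisely to absorb these limiting branches that the companion $P^\vee$ and the real extension $P_\R$ were introduced, so that the trichotomy $\beta>0$, $\beta<0$, $\beta=0$ remains exhaustive on $L_q\cap S$ for every finite $q$; verifying this exhaustiveness is the crux, and it is what makes the simultaneous use of all three functions unavoidable.
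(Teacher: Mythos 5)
Your reduction already goes wrong at the translation step, and the place where it goes wrong is exactly the ``crux'' you flag at the end and then leave open. The theorem is \emph{not} equivalent to $\HH\subseteq\pi(S)$: it is equivalent to $\HH\subseteq\pi\bigl(\mathcal{P}(\mathcal{Q}\setminus(\mathbb{CP}^1\times\{\infty\}))\bigr)$, where $\mathcal{P}$ is the extended twistor lift, and the difference between $S$ and the actual image of $\mathcal{P}$ is not negligible. The homogenised parameterisation is in general only a rational map with base points, so $\mathcal{P}(\mathcal{Q})$ is merely a constructible subset of $S$; a point $z\in L_q\cap S$ produced by your Bezout argument may lie in $S\setminus\mathcal{P}(\mathcal{Q})$ and then records no preimage at all. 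In coordinates this is the phenomenon of degree drop: the paper's count runs through the roots of the determinant $D_q$ of \eqref{determinantA}, and when $\deg_v D_q$ falls below the twistor degree $d$ for special $q$, the missing roots escape to $v=\infty$, i.e.\ to the circle $\mathbb{CP}^1\times\{\infty\}\subset\mathcal{Q}_\R$, which $\pi$ sends to $\infty\in\SF^4$; such solutions of System~\eqref{system} give no preimage under any of $P$, $P^\vee$, $P_\R$. (Note also that your third listed degeneration is vacuous: for finite $q$, \emph{every} point of $L_q$ projects to $q$ by definition of the fibre, so no point of $L_q\cap S$ can ``lie over $\infty$''; the escape happens in the parameter space $\mathcal{Q}$, not on the fibre.) By contrast, the paper's proof of Corollary~\ref{union} is not intersection-theoretic at all: it is a two-line algebraic argument that feeds an arbitrary $q\in\HH$ into Theorem~\ref{systempre-images} and Corollary~\ref{mainresult}, resting on the single claim that non-slice-constancy of $P$ forces some of $g,\hat g,h,\hat h$ to be non-constant and hence $D_q$ to have at least one root $v\in\C$ for every $q$, each root yielding a preimage under $P$, $P^\vee$ or $P_\R$ according to whether $v\in\C^+$, $v\in\C^-$ or $v\in\R$.

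That your unverified exhaustiveness claim is a genuine obstruction, and not bookkeeping, is shown by $P(q)=q*\ell_+$, i.e.\ $g(v)=v$, $\hat g=h=\hat h\equiv 0$. Here $\mathcal{P}([s,u],[t,v])=[st,ut,sv,0]$, $S$ is the hyperplane $\{X_3=0\}$, $d=1$, and $D_q(v)=v\bar q_1-|q_1|^2-|q_2|^2$ as in Example~\ref{esempicompleti}. For $q=j$ one gets $D_q\equiv -1$, a root-free nonzero constant (the leading coefficient $\bar q_1$ of $D_q$ vanishes on the real $3$-plane $\{q_1=0\}$), while $A_q(v)$ never vanishes; the unique point of $L_j\cap S$ is $[0,1,-1,0]$, and it does \emph{not} lie in $\mathcal{P}(\mathcal{Q})$ --- it is reached only through the base point $([0,1],[0,1])$, whose resolution maps onto the line $\{X_0=X_3=0\}$ with parameters over $v=\infty$. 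A direct check on the explicit formulas $P=\ell_+v$, $P^\vee=\ell_-\bar v$, $P_\R(\alpha_I)=\alpha\frac{1-Ii}{2}$ confirms that $j$ is not attained by any of the three maps. So the trichotomy on $L_q\cap S$ that you call the crux actually \emph{fails} on the exceptional locus, and on the generic locus your argument reproves what Corollary~\ref{mainresult} (Theorem~\ref{mainresultintro}) already gives; the proposal therefore contributes nothing precisely where work is needed. The same computation shows that the paper's inference ``some splitting polynomial non-constant $\Rightarrow$ $D_q$ has a root for \emph{all} $q$'' itself requires the top coefficient of $D_q$ not to depend on $q$ (it is automatic, e.g., for genuine quaternionic polynomials, where the leading term of $D_q$ is $q$-independent by Remark~\ref{doubledegree}, but not for general slice-polynomial functions); a complete argument along your lines would have to control this degeneracy locus explicitly rather than defer it.
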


\noindent

The introduction of slice-polynomial functions provides us with a useful tool to study the twistor geometry of a certain class of surfaces in $\mathbb{CP}^3$. 
Given an algebraic surface $\mathcal{S}\subset\mathbb{CP}^{3}$ of degree $d$
its {\em twistor discriminant locus} $Disc(\mathcal{S})$ is defined as the following subset of the $4$-sphere
$$
Disc(\mathcal{S}):=\{p\in\mathbb{S}^{4}\,|\,\#(\pi^{-1}(p)\cap\mathcal{S})\neq d\}.
$$
The importance of the discriminant locus lies in the fact that its topology is a conformal invariant of the surface $\mathcal{S}$ ({see e.g.~\cite{APS,salamonviac}}). 
In particular, the number of twistor fibres contained in $\mathcal{S}$
is a conformal invariant. 
Twistor fibres can be effectively computed by means of slice regular functions:
in fact, if $f$ is a slice regular function which is constant on a $2$-sphere of the form $\alpha+\mathbb{S}\beta$, then $\pi^{-1}(f(\mathbb{S}_{\alpha+I\beta}))$ is a twistor fibre.

In this paper, as an application, we study the particular case of a cubic scroll $\mathcal{C}$ in $\mathbb{CP}^{3}$, showing some of its beautiful
topological properties. With the newly introduced theory of slice-polynomial functions we are able to compute the discriminant
locus of $\mathcal{C}$ in a couple of pages giving therefore a new method to analyse something
that is in general hard to compute (see~\cite[Section 2]{AS}).
\begin{teo}\label{discriminantintro}
	The discriminant locus of the cubic scroll 
	\[\mathcal{C}=\{[X_0,X_1,X_2,X_3] \in \mathbb{CP}^3 : X_0X_3^2+X_1^2X_2=0\}\]
	 consists in a $2$-sphere with $6$ handles, pinched at one pole and at a third root of $-1$ lying on an equator:
	\[
	Disc(\mathcal{C})=\Sigma=\hat{\C}_i\bigcup_{\substack{P=0,\infty\\ z_m^3=-1, z_m\in\C_i}}\Sigma_{P,z_m},
	\]
where $\Sigma_{P,z_m}$ denotes a handle pinched to $\hat{\C}_i$ at $P$ and $z_m$. 
\end{teo}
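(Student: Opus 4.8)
The plan is to realize $\mathcal{C}$ as the image of the twistor lift of an explicit slice-polynomial function $P$ and then to read off the discriminant locus from the pre-image count supplied by Theorem~\ref{mainresultintro}. Since $\mathcal{C}$ is a cubic scroll, the correspondence between scrolls and slice-polynomial functions, together with Corollary~\ref{surf}, guarantees a slice-polynomial function $P$ of twistor degree $d=3$ whose lift $\tilde P$ parametrizes $\mathcal{C}$ away from the finitely many fibres escaping the chart $\mathcal{Q}^+$. Concretely, I would first solve the defining equation $X_0X_3^2+X_1^2X_2=0$ against the standard twistor parametrization of $\mathcal{Q}^+$ to produce $P$, and then compute its companion $P^\vee$ and its extension $P_\R$ from Definition~\ref{defcompanion} and Subsection~\ref{extensionsection}.

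With $P$, $P^\vee$ and $P_\R$ in hand, the identity $\#(\pi^{-1}(q)\cap\mathcal{C})=\#\{P^{-1}(q)\}+\#\{{P^\vee}^{-1}(q)\}+\#\{P_\R^{-1}(q)\}$ holds for every $q$ whose fibre meets $\mathcal{C}$ transversally, and Theorem~\ref{mainresultintro} says this sum equals $3$ generically. Hence $Disc(\mathcal{C})$ is exactly the locus where the sum fails to be $3$, and this splits into two mechanisms: \emph{collapse}, where two pre-images coincide — that is, the restriction $P|_{\C_I^+}$ or $P^\vee|_{\C_I^+}$ acquires a multiple root, or a pre-image crosses the real axis and is absorbed by $P_\R$ — dropping the count below $3$; and \emph{explosion}, where $P$, $P^\vee$, or $P_\R$ is constant on a sphere $\alpha+\SF\beta$, so that by the twistor-fibre criterion recalled above an entire twistor fibre lies in $\mathcal{C}$ and the count becomes infinite. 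I would organize the computation by solving the vanishing of the discriminant of the cubic $P|_{\C_I^+}$ as $I$ ranges over $\SF$; this real-algebraic equation is what cuts out $\Sigma$.

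The explicit computation should reveal that the collapse locus has a distinguished component along the slice $\C_i$, where the parametrizing polynomial degenerates, so that the compactification $\hat{\C}_i\subset\SF^4$ appears inside $Disc(\mathcal{C})$ as the base sphere. The remaining components come from the spheres on which one of $P$, $P^\vee$, $P_\R$ is spherically constant; following these as $\beta$ varies produces the tubes $\Sigma_{P,z_m}$. The constraint $z_m^3=-1$ arises from matching the cubic defining equation of $\mathcal{C}$ against the zero set of $P$ along $\C_i$, whereas the two poles $P=0,\infty$ are the limit points of the $\beta\to 0$ and $\beta\to\infty$ degenerations on the slice; together they account for the $2\times 3=6$ attachment pairs indexing the handles.

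The main obstacle is the topological bookkeeping at the special points, namely upgrading the set-theoretic description of $\Sigma$ to the claimed sphere-with-six-handles pinched at the poles and at the cube roots of $-1$. Near each attachment site the discriminant equation is singular, and one must carry out a careful local analysis — an explicit local chart or a blow-up — to verify that each $\Sigma_{P,z_m}$ is genuinely a handle, contributing one to the genus, and that it meets $\hat{\C}_i$ in exactly the two pinch points $P$ and $z_m$ with no spurious extra intersections. Controlling precisely how the collapse locus and the explosion spheres glue along $\hat{\C}_i$ — rather than creating isolated points or higher-order tangencies — is where the real difficulty lies; by contrast, the generic count itself, being governed directly by Theorem~\ref{mainresultintro}, is comparatively routine.
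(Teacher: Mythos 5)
Your overall strategy --- parametrize $\mathcal{C}$ by an explicit slice-polynomial function and cut out $\Sigma$ by a real-algebraic discriminant equation --- is indeed the paper's (the function is $R(q)=-q^2*\ell_++q*\ell_-$ from Example~\ref{firstexample}, with splitting $g(v)=-v^2$, $\hat g(v)=v$, $h=\hat h\equiv 0$). But there is a genuine gap in your reduction: you claim that $Disc(\mathcal{C})$ is \emph{exactly} the locus where $\#\{P^{-1}(q)\}+\#\{{P^\vee}^{-1}(q)\}+\#\{P_\R^{-1}(q)\}\neq 3$, up to finitely many bad fibres. This fails along the entire base sphere $\hat{\C}_i$ of $\Sigma$: for $q=q_1\in\C_i$ one has $D_q(v)=(v-\bar q_1)(v^2+q_1)$, which generically has three \emph{distinct} roots, so the pre-image sum is $3$; nevertheless $\pi^{-1}(q)\cap\mathcal{C}$ consists of only two points, because $\mathcal{C}$ is a non-normal scroll singular along the line $m_{13}$, the lift is $2:1$ over $m_{13}\setminus\{A\}$, and $m_{01}$ is not in the image at all. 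Two distinct points of $\mathcal{Q}$ collide in the \emph{target}, not in the domain, so none of your mechanisms (multiple root of a restriction, absorption by $P_\R$, spherical constancy) detects it, and the failure locus is $2$-dimensional, not a finite set of fibres. This is precisely why the paper proves the birationality statement (Theorem~\ref{birat}) and analyzes $m_{01}\cup m_{13}$ separately (Remark~\ref{lines}): the component $\hat{\C}_i\subset Disc(\mathcal{C})$ comes from the singular directrix, not from a degeneration of the parametrizing polynomial as you assert.

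Your account of the handles is also wrong in mechanism. A sphere $\alpha+\SF\beta$ on which the function is spherically constant yields a twistor fibre contained in $\mathcal{C}$, i.e.\ a point of $G_0$; here these are exactly the five isolated points $\{0,-1,\frac{1}{2}\pm i\frac{\sqrt{3}}{2},\infty\}$ (Proposition~\ref{fibre}), so ``following these as $\beta$ varies'' cannot produce six $2$-dimensional handles. In the paper the handles are the double-point set $G_2$ with $q_2\neq 0$, cut out by the vanishing of the discriminant of the cubic $D_q(v)=v^3-\bar q_1v^2+q_1v-|q_1|^2-|q_2|^2$ --- note the relevant cubic is the determinant $D_q$ of System~\eqref{system}, not the restriction $P|_{\C_I^+}$ (which has degree $2$ here, and whose multiple roots would only see collisions inside one semi-slice, missing collisions among pre-images of $P$, $P^\vee$ and $P_\R$). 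The topological bookkeeping you defer to ``a local chart or blow-up'' is done in the paper by explicitly solving: writing $q_1=x+iy$, $z=|q_2|$, the imaginary part of $\Delta_{D_q}=0$ factors as $4y(3x^2-y^2)(1-(x^2+y^2)-z^2)=0$; the case $y=0$ gives a curve $p_1(x,z)=0$ supported in $x\le 0$ through $(0,0)$, $(-1,0)$ and $\infty$, the case $y^2=3x^2$ gives its two rotations by $2\pi/3$ and $4\pi/3$, and the third case contributes nothing new; since $z=|q_2|$, each curve sweeps out a pinched sphere in $\SF^4$, yielding the six handles, while a separate computation (Proposition~\ref{triple}) shows $G_1=\emptyset$. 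As written, your proposal would miss $\hat{\C}_i$ entirely and would not generate the handles, so the argument does not go through without the birationality analysis and the discriminant-of-$D_q$ computation.
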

\noindent As we mentioned at the beginning of this introduction, there is not a complete classification of OCS's in subdomains on $\SF^4$. 
Another interesting consequence of our study, it is the fact that we are able to show the existence of $3$ different OCS's defined outside the discriminant locus of $\mathcal C$.
\begin{cor}\label{3ocsintro}
	The manifold $\SF^{4}\setminus \Sigma$ admits 3 non-constant strongly independent OCS's. 
\end{cor}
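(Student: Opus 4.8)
The plan is to realise the three orthogonal complex structures as the ones induced by the three ``sheets'' of $\mathcal{C}$ over the complement of its discriminant locus, and then to verify that they are pairwise distinct and, more delicately, never antipodal on all of $\SF^4\setminus\Sigma$.

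First I would use that, by the very definition of $\Sigma=Disc(\mathcal{C})$ together with Theorem \ref{discriminantintro}, for every $p\in\SF^4\setminus\Sigma$ the twistor fibre $\pi^{-1}(p)$ meets $\mathcal{C}$ in exactly $d=3$ distinct points. Writing $\mathcal{C}$ as the image of the twistor lift of a slice-polynomial function $P$ of twistor degree $3$, the companion $P^\vee$ and the extension $P_\R$ are designed precisely so that, as $p$ ranges over $\SF^4\setminus\Sigma$, these three intersection points are selected \emph{single-valuedly}: by Corollary \ref{mainresult} and Theorem \ref{mainresultintro} the three families of preimages of $P$, $P^\vee$ and $P_\R$ together account for all three points of $\pi^{-1}(p)\cap\mathcal{C}$. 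This produces three globally defined, injective holomorphic sections $s_1,s_2,s_3\colon\SF^4\setminus\Sigma\to\mathbb{CP}^3$ of $\pi$, each transverse to the fibres with image in $\mathcal{C}$. By the twistor correspondence recalled in the introduction (following \cite{salamonviac}), each such single-valued graph induces an OCS $J_k$ on $\SF^4\setminus\Sigma$, namely $J_1=\mathbb{J}^{P}$, $J_2=\mathbb{J}^{P^\vee}$ and the structure associated with $P_\R$.

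Next I would check non-constancy and (weak) independence. Since $P$ is not slice-constant, each section $s_k$ is a genuinely varying section of $\pi$, so by the classification of OCS's on $\HH\setminus\R$ recalled above each $J_k$ coincides, up to a conformal transformation of $\SF^4$, with $\pm\mathbb{J}$, which is non-constant. For independence, note that at each $p\notin\Sigma$ the value $J_k(p)$ is determined by the point $s_k(p)\in\pi^{-1}(p)\cong\mathbb{CP}^1$, and $J_i(p)=J_j(p)$ if and only if $s_i(p)=s_j(p)$; since the three intersection points are distinct for every $p\notin\Sigma$, we get $J_i\neq J_j$, so the $J_k$ are pairwise independent.

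The crux is upgrading independence to \emph{strong} independence, i.e. ruling out $J_i(p)=-J_j(p)$ for $p\notin\Sigma$. Under the identification of a fibre $\pi^{-1}(p)\cong\mathbb{CP}^1$ with the two-sphere of orientation-compatible complex structures, the involution $J\mapsto -J$ is the fibrewise antipodal map, which is the restriction of the real (quaternionic) structure $\sigma$ of $\mathbb{CP}^3$; hence $J_i(p)=-J_j(p)$ if and only if $s_i(p)=\sigma(s_j(p))$, which forces $s_i(p)\in\mathcal{C}\cap\sigma(\mathcal{C})$. I would therefore compute $\sigma(\mathcal{C})=\{X_0^2X_3+X_1X_2^2=0\}$ and intersect it with $\mathcal{C}$; a direct elimination shows that $\mathcal{C}\cap\sigma(\mathcal{C})$ is a union of lines, the essential ones being the $\sigma$-invariant lines sitting over the third roots of $-1$ (entire twistor fibres contained in $\mathcal{C}$) together with the lines projecting to the poles. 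These are exactly the loci appearing in the description of $\Sigma$ in Theorem \ref{discriminantintro}, so $\pi(\mathcal{C}\cap\sigma(\mathcal{C}))\subseteq\Sigma$. Consequently no antipodal coincidence occurs over $\SF^4\setminus\Sigma$, and the three structures are strongly independent. The main obstacle is precisely this final step: verifying that the antipodal-coincidence locus $\mathcal{C}\cap\sigma(\mathcal{C})$ projects into the already-computed discriminant locus, which is where the explicit equation of $\mathcal{C}$ and the fine structure of $\Sigma$ (the pinch points at the cube roots of $-1$ and at the poles) are indispensable.
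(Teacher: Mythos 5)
Your proposal is correct and follows essentially the same route as the paper: the three OCS's come from the three points of $\pi^{-1}(p)\cap\mathcal{C}$ for $p\notin\Sigma$, and strong independence is obtained exactly as in Proposition~\ref{jC} and Corollary~\ref{3ocs} — your $\sigma$ is the paper's antiholomorphic involution $j$, your equation $\sigma(\mathcal{C})=\{X_0^2X_3+X_1X_2^2=0\}$ matches, and the paper likewise concludes from $j(\mathcal{C})\cap\mathcal{C}=m_{02}\cup m_{13}\cup\pi^{-1}(G_0)$ that this locus projects into $\hat{\C}_i\subseteq\Sigma$ (note your list of components omits the directrix lines $m_{02}$ and $m_{13}$, which project onto all of $\hat{\C}_i$ rather than just the poles and cube roots of $-1$, but this only reinforces the needed inclusion). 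One small inaccuracy: the claim that $P$, $P^\vee$ and $P_\R$ select the three fibre points \emph{single-valuedly} as three global sections is not what Corollary~\ref{mainresult} gives — the distribution of the $d=3$ preimages among the three functions varies with $q$, according to whether the corresponding root of $D_q$ lies in $\C^+$, $\C^-$ or $\R$ — but this is harmless, since both independence and strong independence are verified pointwise, exactly as in the paper.
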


The paper is structured as follows. 
In Section 2 we introduce slice-polynomial functions with their various representations. We describe how
to extend a slice-polynomial function to the real line and we introduce the notion of {companion}. 
In Section 3 we recall the main tools in twistor geometry of the $4$-sphere and
we generalise and adapt some construction of~\cite{AAtwistor, gensalsto}. 
Both Sections 2 and 3 are quite technical but introduces several tools that are used in what follows. We point out that most of the results contained in these sections are in fact stated for a more general class of slice regular functions defined outside the real axis. 
In Section 4 we prove Theorems \ref{mainresultintro} and \ref{unionintro} using the technology of twistor spaces,
In Section 5 we use the introduced tools to determine the discriminant locus of a cubic scroll, thus proving Theorems \ref{discriminantintro} and \ref{3ocsintro}. 


\section{Slice-polynomial functions}
Let us begin by recalling some preliminary facts about the approach to slice regularity using stem functions. See \cite{ghiloniperotti} for the details. 
Let $D\subseteq \C$ be an open domain of the complex plane, such that  $D=\bar D$. In the quaternionic setting, a {\em stem function}  is a function of the form $F=F_{1}+\imath F_{2}:D\subseteq\mathbb{C}\to\mathbb{H}\otimes\mathbb{C}$,
such that, for any  $z\in D$, $F(\bar z)=\overline{F(z)}$.
Any stem function $F$ induces a {\em slice function} $f=\mathcal{I}(F):\Omega_{D}\to\mathbb{H}$, defined over the quaternionic domain $\Omega_{D}:=\{\alpha+I\beta\,|\,\alpha+i\beta\in D, I\in\mathbb{S}\}$, as $f(\alpha+I\beta)=F_{1}(\alpha+i\beta)+IF_{2}(\alpha+i\beta)$.
Sets of the form $\Omega_{D}$ are called \textit{circular domains} (in the literature they are also called \textit{slice symmetric domains} or \textit{product domains}, depending weather they intersect or not the real axis). In this paper we will always consider circular domains as domains of definition of our slice regular functions. For this reason, we will sometimes drop out the subscript $D$ to simplify the notation.
If the stem function $F$ is holomorphic, then the induced $f=\mathcal I (F)$ is a slice regular function.

A general key result in slice regularity is the \textit{Representation Formula} which allows us to 
restore a slice regular function from its values on a complex slice. 
\begin{theorem}
Let $\Omega\subseteq \HH$ be a circular domain and let $f:\Omega\to\mathbb{H}$ be a slice regular function.
	Then, for any $J\in\mathbb{S}$, $f$ is uniquely determined by its values over $\mathbb{C}_{J}$ by the following formula:
	\begin{equation*}
	f(\alpha+I\beta)=\frac{1}{2}\left[f(\alpha +J\beta )+f(\alpha -J\beta )-IJ\left(f(\alpha +J\beta )-f(\alpha -J\beta )\right)\right].
	\end{equation*}
\end{theorem}
\noindent For any $I\in\SF$, let $D\subset\C_{I}$ be a domain such that $\bar D=D$. Then any holomorphic function $f:D\to\HH$ can be uniquely extended to a slice regular function $\ext(f):\Omega_{D}\to\HH$, called
\textit{regular extension}, by means of the Representation Formula.

Given now any slice regular function on a circular domain $f:\Omega\to\HH$ it is possible to define its \textit{spherical derivative}
$\partial_{s}f:\Omega\setminus\R\to\HH$ as
$$
\partial_{s}f(q):=\frac{1}{2}\IIm(q)^{-1}(f(q)-f(\bar q)).
$$
Since $\Omega$ is circular, given any $q=\alpha+I\beta\in\Omega\setminus\R$, the 2-sphere centred at $\alpha$ with radius $\beta$ is fully contained in $\Omega$:
$$
\SF_{q}:=\{\alpha+J\beta\,|\,J\in\SF\}\subset\Omega.
$$
If $q\in\Omega\setminus\R$ is such that $\partial_{s}f(q)=0$, then $f|_{\SF_{q}}\equiv f(q)$, and
$\SF_{q}$ is said to be a \textit{degenerate sphere} for $f$. Notice that if $f=\mathcal{I}(F_1+  \imath F_2)$, then $\partial_{s}f(\alpha+I\beta)=0$ if and only if $F_2(\alpha+ i \beta)=0$. The set of degenerate spheres of 
a non-constant slice regular function $f$ is closed, with empty interior and denoted by $\mathcal{D}_{f}$.\\
As shown in~\cite{AA}, a slice regular function $f$ defined on a domain without real points
can be constant also on another type of $2$-dimensional subsets called \textit{wings}\footnote{The name ``wing'' is due to R. Ghiloni and A. Perotti.}.
A wing for a slice regular function $f$ is a surface $\Sigma\subset\Omega_{D}$ biholomorphic to $D\cap\C^{+}$ such that $f|_\Sigma$ is constant. As proved in~\cite{AA}, the set of wings of a slice regular function $f$ is also
closed and with empty interior (or the full domain if $f$ is slice-constant, see Definition~\ref{sliceconstant}).
 We will denote it by $\mathcal{W}_{f}$.
A very important topological result for slice regular function is the following.
\begin{theorem}[Open Mapping Theorem]
	Let $\Omega \subseteq \HH$ be a circular domain and let $f:\Omega\to\HH$ be any slice regular function. Then the restriction $f|_{\Omega\setminus(\mathcal{D}_{f}\cup\mathcal{W}_{f})}$ is open.
\end{theorem}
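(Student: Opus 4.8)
The plan is to reduce the statement to a local one and to exploit the product structure of circular domains without real points. Writing $f=\mathcal I(F_1+\imath F_2)$, the assignment $q=\alpha+I\beta\mapsto(\alpha+i\beta,I)$ identifies $\Omega\setminus\R$ with $D^+\times\SF$, where $D^+=D\cap\C^+$, and under this identification $f$ becomes $\hat f(z,I)=F_1(z)+IF_2(z)$ with $F_1,F_2\colon D^+\to\HH$ holomorphic. Fix $q_0=\alpha_0+I_0\beta_0\notin\mathcal D_f\cup\mathcal W_f$ and set $z_0=\alpha_0+i\beta_0$, $b_0=F_2(z_0)$, $y_0=f(q_0)$. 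Since $q_0\notin\mathcal D_f$ we have $b_0\neq0$, and since $q_0\notin\mathcal W_f$ the holomorphic slice map $\psi(z):=\hat f(z,I_0)=F_1(z)+I_0F_2(z)$ is not constant near $z_0$ (otherwise the half-slice through $q_0$, or a graph transverse to the slices, would be a wing). I would prove that $\hat f$ is open at $q_0$, which is equivalent to the claim.

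First I would analyze the differential. The sphere directions $T_{I_0}\SF$ are sent by $d\hat f$ to $\{v\,b_0:v\in T_{I_0}\SF\}$; as right multiplication by $b_0\neq0$ is invertible, this is the $2$-plane tangent at $y_0$ to the round $2$-sphere $S_0=F_1(z_0)+\SF\,b_0=f(\SF_{q_0})$. The slice directions are sent to the tangent space at $y_0$ of the holomorphic curve $C=\psi(D^+)$, namely $\C_{I_0}\,\psi'(z_0)$ whenever $\psi'(z_0)\neq0$. Hence, if $\psi'(z_0)\neq0$ and the curve $C$ is transverse to $S_0$ at $y_0$, the differential $d\hat f$ is an isomorphism and $\hat f$ is a local diffeomorphism, in particular open, by the Inverse Function Theorem. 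This already settles openness on the dense open subset of $\Omega\setminus(\mathcal D_f\cup\mathcal W_f)$ where these two nondegeneracy conditions hold.

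The remaining points, where $\psi'(z_0)=0$ or where $C$ is tangent to $S_0$, form a lower-dimensional subset and require a topological degree argument, exactly as the complex open mapping theorem treats the critical points of $z\mapsto z^m$. Here I would compute the local Brouwer degree of $\hat f$ over $y_0$ on a small neighbourhood $U$ of $q_0$: the sphere factor contributes a diffeomorphism of degree $\pm1$ onto $S_0$, while the nonconstant holomorphic slice map $\psi$ contributes a positive local multiplicity. Splitting $\HH$ near $y_0$ into the directions tangent and normal to $S_0$, the normal component of $\psi(z)-y_0$ vanishes to a finite order, and a product/fibration structure of the degree then yields a nonzero value. Since a nonzero local degree is stable under small perturbations of the target, for every $y$ close to $y_0$ the equation $\hat f=y$ has a solution in $U$; thus $\hat f(U)$ is a neighbourhood of $y_0$ and $\hat f$ is open at $q_0$.

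The main obstacle is precisely this degree computation at the exceptional points. The delicate input is that the holomorphic slice-curve $C$ meets the foliation of the image by the spheres $S(z)=F_1(z)+\SF F_2(z)$ with a well-defined finite contact order, so that the normal component of $\psi$ vanishes to finite order and the degree stays nonzero; this finiteness is the analytic incarnation of $q_0\notin\mathcal W_f$, a wing being exactly a $2$-dimensional set along which $\hat f$ is constant, i.e.\ where this order becomes infinite and the degree collapses to zero. Granting that $\mathcal D_f\cup\mathcal W_f$ is closed with empty interior, as recorded above, the nondegenerate locus of the second paragraph is dense and the degree argument of the third paragraph covers what remains, which completes the proof.
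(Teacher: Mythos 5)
Your reduction to the product model $D^+\times\SF$, the identification of the non-degenerate locus, and the overall strategy (isolated fibre plus nonzero local degree implies openness at a point) are all sound, and this is a genuinely different route from the one the paper points to (the paper defers to an adaptation of the minimum-modulus-principle argument of \cite[Theorem 5.1]{AAproperties}). But there is a genuine gap, and it sits exactly at the point you yourself flag as delicate: the claim that the normal component of $\psi(z)-y_0$ with respect to the sphere foliation $S(z)=F_1(z)+\SF F_2(z)$ vanishes to \emph{finite} order, with the justification that infinite order ``is the analytic incarnation of $q_0\notin\mathcal{W}_f$''. This is not a proof, and it is not a formal consequence of the non-wing hypothesis. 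A wing only excludes $2$-dimensional level sets; in the merely real-analytic category a map $\R^2\to\R^2$ that is not identically zero can perfectly well vanish along a $1$-dimensional curve, and at an isolated zero its local degree can be $0$ (think of $z\mapsto|z|^2$ viewed as a map into $\C$: isolated zero, degree zero, not open). A priori your normal-component map could behave this way, in which case your degree collapses to zero; worse, if the fibre $\hat f^{-1}(y_0)$ contained a curve through $q_0$ — which ``no wing, no degenerate sphere'' does not by itself exclude — the local degree over $y_0$ would not even be defined. So both the definedness and the nonvanishing of your degree rest on an unproved dichotomy.

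What closes the gap is an algebraic input specific to slice regularity, namely the holomorphy of the relevant normal equation. The condition $y\in S(z)$ is equivalent to $h_y(z)=0$, where
\[
h_y(z):=\bigl(|y-F_1(z)|^2-|F_2(z)|^2\bigr)+2i\,\langle y-F_1(z),F_2(z)\rangle ,
\]
and $h_y$ is \emph{holomorphic} in $z$: it is the ($\C$-valued) stem of the symmetrization $(f-y)*(f-y)^c$, whose two components are real. Hence either $z_0$ is an isolated zero of $h_{y_0}$ of some finite multiplicity $m\geq 1$, or $h_{y_0}\equiv 0$ on the component, in which case the graph $z\mapsto\bigl(z,(y_0-F_1(z))F_2(z)^{-1}\bigr)$ is a $2$-dimensional set on which $f\equiv y_0$, i.e.\ precisely a wing through $q_0$ (or degenerate spheres where $F_2=0$). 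Granting this, no degree theory is needed: by Rouch\'e, for $y$ near $y_0$ the function $h_y$ has $m\geq1$ zeros near $z_0$, and each zero $z$ yields the unique preimage $\bigl(z,(y-F_1(z))F_2(z)^{-1}\bigr)$, proving openness at $q_0$. This is the same mechanism as the determinant $D_q(v)$ of Section~4 and as the symmetrization/minimum-modulus argument behind the proof the paper cites; without it, your third paragraph does not go through. Finally, note that your argument only treats $\Omega\setminus\R$, while the statement allows $\Omega\cap\R\neq\emptyset$; at real points the product identification degenerates, and you should invoke the classical open mapping theorem of \cite{genstostru} there (on such domains $(f-y)*(f-y)^c\equiv0$ forces $f\equiv y$, so wings cannot occur and only $\mathcal{D}_f$ matters).
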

	This statement of the Open Mapping Theorem generalises the one in~\cite{genstostru}, where it is assumed
	that $\Omega\cap\R\neq\emptyset$ and the one in~\cite{AAproperties} where $\mathcal{W}_{f}$ is 
	supposed to be only composed by semi-slices of the form $\Omega\cap\C_{I}^{+}$ for suitable $I\in\SF$.
	The proof in this case can be performed suitably adapting that of~\cite[Theorem 5.1]{AAproperties}.
	Another version of the Open Mapping Theorem was recently given in~\cite{ghilperstodiv}.

Another fundamental result is the \textit{Splitting Lemma} which states that, for any $I\in\mathbb{S}$, the restriction of a slice regular function $f$ to a complex slice $\mathbb{C}_{I}$ can be written as a sum
$f|_{\mathbb{C}_{I}}=G+HJ$, where $J\in\mathbb{S}$ is orthogonal to $I$ and $G,H$ are $\mathbb{C}_{I}$-valued holomorphic functions. 

The point-wise product between two slice regular functions is not, in general, slice regular.
In the theory of slice regularity is more natural to use the $*$-product, which does preserve regularity, defined as follows. Let $f=\mathcal{I}(F)$ and 
$g=\mathcal{I}(G)$ be two slice regular functions. Then their $*$-product is defined to be the function induced by the 
point-wise product of their stem functions, i.e.:
$f*g=\mathcal{I}(FG)$.
A slice regular function $f:\Omega\to\HH$ is said to be $\C_{I}$-preserving if, for some $I\in\SF$, it holds
$f(\Omega\cap\mathbb{C}_{I})\subseteq \mathbb{C}_{I}$,
while it is called \textit{slice preserving} if the previous inclusion holds for any $I\in\SF$.
	If $f$ and $g$ are both $\C_{I}$-preserving slice regular functions then $f*g=g*f$, while if 
	$f$ is slice preserving and $g$ is any slice regular function then $f*g=g*f=fg$.

A special class of functions, introduced and studied in~\cite{AAproperties,AAtwistor,AA}, which is peculiarly quaternionic, is the class of the so called slice-constant functions.
\begin{defi}\label{sliceconstant}
A slice function $f=\mathcal{I}(F):\Omega\to\HH$ is said to be \textit{slice-constant} if its stem function $F$ is 
locally constant.
\end{defi}
If a slice-constant function is defined on a connected domain intersecting the real axis then it is constant.
As proven in~\cite[Theorem 3.4]{AAproperties}, if $f$ is a slice-constant function, then it is regular and its
{\em Cullen derivative} is identically zero (see~\cite[Definition 1.7]{genstostru}).

A fundamental example of slice-constant functions is the following.
\begin{defi}
Fix any imaginary unit $J\in\mathbb{S}$. Let $L_{+}^{J},L_{-}^{J}:\mathbb{C}\setminus \mathbb{R}\rightarrow \mathbb{H}_{\mathbb{C}}$ be the stem functions defined as
\begin{equation*}
L_{+}^{J}(z):=\begin{cases}
\frac{1- \imath J}{2},\mbox{ if }z\in\mathbb{C}^{+}\\
\frac{1+ \imath J}{2},\mbox{ if }z\in\mathbb{C}^{-}
\end{cases},\quad L_{-}^{J}(z):=\begin{cases}
\frac{1+ \imath J}{2},\mbox{ if }z\in\mathbb{C}^{+}\\
\frac{1- \imath J}{2},\mbox{ if }z\in\mathbb{C}^{-}.
\end{cases}
\end{equation*}
The previous stem functions induce the slice-constant functions,
\begin{equation*}
\ell_{+}^{J}(\alpha+I\beta)=\mathcal{I}(L_{+}^{J})(\alpha+I\beta)=\frac{1-IJ}{2},\quad \ell_{-}^{J}(\alpha+I\beta)=\mathcal{I}(L_{-}^{J})(\alpha+I\beta)=\frac{1+IJ}{2}.
\end{equation*}
\end{defi}
\noindent For $J=i$ we will use the simpler notation 
\begin{equation*}
\ell_{+}:=\ell_{+}^{i},\quad \ell_{-}:=\ell_{-}^{i}.
\end{equation*}

\begin{remark}
The family of functions just defined is of particular interest since any of its element $\ell_{+}^{J}$ is idempotent. 
More precisely, from the definition and straightforward computations, we have the following equalities:
$$
(\ell_{+}^{J})^c=\ell_{-}^{J},\qquad \ell_{+}^{J}*\ell_{+}^{J}=\ell_{+}^{J},\qquad (\ell_{+}^{J})^s=\ell_{+}^{J}*\ell_{-}^{J}\equiv 0,\qquad \ell_{+}^{J}+\ell_{-}^{J}\equiv 1.
$$
Moreover for any $J\in\SF$, the two functions $\ell_{+}^{J}$ and $\ell_{-}^{J}$ are $\C_{J}$-preserving.
\end{remark}

\begin{remark}\label{changebasis}
Given any quaternion $q\in\HH$, for any fixed $J\in\mathbb{S}$, we denote by $q^\top$ and $q^\bot$ the orthogonal projections of $q$
on $\C_J$ and $\C_J^\bot$, respectively. Therefore, since $J$ commutes with $q^\top$ and anti-commutes with $q^\bot$, for any couple $a,b\in\HH$ we have, by direct computation, that,
$$
(a^\top+a^\bot)*\ell_{+}^{J}+(b^\top+b^\bot)*\ell_{-}^{J}=\ell_{+}^{J}(a^\top+b^\bot)+\ell_{-}^{J}(a^\bot+b^\top).
$$
This family of idempotent functions generate the space of slice-constant functions in the following sense: a slice function $g$ is slice-constant if and only if for any  $J\in\mathbb{S}$, $g$ is a linear combination of the two functions $\ell_{+}^{J}$ and $\ell_{-}^{J}$, i.e. there exist $a,b\in\HH$ such that
$g=a*\ell_{+}^{J}+b*\ell_{-}^{J}=\ell_{+}^{J}(a^\top+b^\bot)+\ell_{-}^{J}(a^\bot+b^\top).
$
See~\cite[Proposition 15]{AAtwistor}.
In particular, since $\ell_+^J$ and $\ell_-^J$ are slice-constant, for any other 
$K\in\SF$, we have (in accordance with the Representation Formula)
\begin{equation}\label{changeunits}
\ell_+^J=\ell_+^K\left(\frac{1-KJ}{2}\right)+\ell_-^K\left(\frac{1+KJ}{2}\right)\quad \ell_-^J=\ell_+^K\left(\frac{1+KJ}{2}\right)+\ell_-^K\left(\frac{1-KJ}{2}\right).
\end{equation}
\end{remark}

Let $f: \Omega \to \HH$, with $\Omega \cap \R=\emptyset$, be any slice regular function. 
Thanks to the Splitting Lemma, for any $J\in\SF$,
the restriction of $f$ to $\Omega \cap \C_J$
can be written as $f_J(v)=G(v)+H(v)K$, where $K\in \SF, K \perp J$ and
$G,H: \Omega \cap \C_J \rightarrow \mathbb{C}_J$ are holomorphic functions 
of the form
\begin{equation}\label{split23}
G(v):=\begin{cases}
g(v) & v\in\Omega \cap \C_J^{+}\\
\overline{\hat{g}(\bar v)} & v\in \Omega \cap \C_J^{-}
\end{cases},\quad H(v):=\begin{cases}
h(v) & v\in \Omega \cap \C_J^{+}\\
\overline{\hat{h}(\bar v)} & v\in \Omega \cap \C_J^{-}
\end{cases},
\end{equation}
where $g,\hat{g},h,\hat{h}$ are holomorphic functions defined on $\Omega \cap \C_J^{+}$.
The expression of $G$ and $H$ in Formula~\eqref{split23} is chosen to be convenient for what follows.
Note that, since $\hat{g}$ and $\hat{h}$ are holomorphic functions, then the two functions
$v\mapsto \overline{\hat{g}(\bar v)}$, $v\mapsto\overline{\hat{h}(\bar v)}$ are holomorphic as well
and if $f_{J}$ is the restriction of a slice regular function defined over the whole algebra $\HH$
then $\overline{\hat{g}(\bar v)}=g(v)$ and $\overline{\hat{h}(\bar v)}=h( v)$.

Viceversa, given $G, H$ holomorphic functions on $\Omega\cap \C_{J}$ defined as above, we
can define a slice regular function $f:\Omega \rightarrow \mathbb{H}$ which splits over $\C_J$ as 
\begin{equation}\label{split2}
f(v)=G(v)+H(v)K=\begin{cases}
g(v)+h(v)K & v\in\Omega \cap \C_J^{+}\\
\overline{\hat{g}(\bar v)}+\overline{\hat{h}(\bar v)}K & v\in \Omega \cap \C_J^{-}
\end{cases},
\end{equation}
by means of the Representation Formula:
for any  $\alpha+I\beta\in\Omega$, with $\beta>0$,
\begin{equation}\label{splitting}
\begin{split}
f(\alpha+I\beta)  &=  \frac{1-IJ}{2}f(\alpha+J\beta)+\frac{1+IJ}{2}f(\alpha-J\beta)\\
&= \frac{1-IJ}{2}(g(\alpha+J\beta)+h(\alpha+J\beta)K)+\frac{1+IJ}{2}(\overline{\hat{g}(\alpha+J\beta)}+\overline{\hat{h}(\alpha+J\beta)}K).
\end{split}
\end{equation}
After having identified $g,h, \hat{g}$ and $\hat{h}$ with their extensions on $\C_J \setminus \R$ by Schwarz reflection (i.e. $g(\bar v)=\overline{g(v)}$), we can consider their regular extensions $\ext(g), \ext(h), \ext(\hat g), \ext(\hat h)$. Then we have that
\begin{align*}
&(\ext(g)+(\ext(\hat h))^cK)* \ell_+^J+((\ext(\hat g))^c+\ext(h)K)* \ell_-^J\\
&=\ell_+^J*(\ext(g)+\ext(h)K)+ \ell_-^J*((\ext(\hat g))^c+(\ext(\hat h))^cK).
\end{align*} 
The previous expression restricted to $\Omega \cap \C_J^+$ coincides with $g+hK$, while restricted to $\Omega \cap \C_J^-$ coincides with $\hat g+\hat hK$, therefore, thanks to Identity Principle~\cite[Theorem 3.6]{AAproperties}, we get that 
\[f=f_+*\ell^J_++f_-*\ell^J_-,\]
where 
\begin{equation}\label{effe+}
f_+=\ext(g)+(\ext(\hat h))^cK \quad \text{and} \quad f_-=(\ext(\hat g))^c+\ext(h)K.
\end{equation}
Summarising we have proved the following result.
\begin{teo}
	Let $f: \Omega \to \HH$ be a slice regular function on a circular domain $\Omega$ such that $\Omega \cap \R=\emptyset$. Then, for any $J\in \SF$, there exist and are unique $f_+,f_-:\Omega \to \HH$ slice regular functions, such that $f=f_+*\ell_+^J+f_-*\ell_-^J$.
	
\end{teo}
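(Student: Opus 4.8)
The plan is to establish existence and uniqueness separately, where existence amounts to verifying that the explicit formula for $f_+$ and $f_-$ given in the paragraph preceding the statement actually reproduces $f$, and uniqueness is a short algebraic argument using the idempotent identities. First I would fix $J\in\SF$ and appeal to the Splitting Lemma to write the restriction $f_J = G + HK$ with $K\perp J$ and $G,H:\Omega\cap\C_J\to\C_J$ holomorphic, recording them in the form~\eqref{split23} via the four holomorphic functions $g,\hat g, h,\hat h$ on $\Omega\cap\C_J^+$. The decomposition into $\C_J^+$ and $\C_J^-$ branches is the natural bookkeeping device that makes the two semi-slices independent, which is exactly what lets us choose $f_+$ and $f_-$ independently.

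For the existence part, I would take $f_+ = \ext(g) + (\ext(\hat h))^cK$ and $f_- = (\ext(\hat g))^c + \ext(h)K$ as in~\eqref{effe+}, and verify that $g := f_+*\ell_+^J + f_-*\ell_-^J$ agrees with $f$. The cleanest route is the one already sketched: using the commutation rule from Remark~\ref{changebasis} to move the idempotents to the left, I would compute the restriction of $f_+*\ell_+^J + f_-*\ell_-^J$ to $\Omega\cap\C_J^+$ and to $\Omega\cap\C_J^-$ separately. On $\C_J^+$ the idempotent $\ell_+^J$ acts as the identity and $\ell_-^J$ annihilates, so the expression collapses to $g + hK$, which is $f$ on that semi-slice; symmetrically on $\C_J^-$ it collapses to $\hat g + \hat h K$, again matching $f$. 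Since two slice regular functions agreeing on $\Omega\cap\C_J$ coincide by the Identity Principle~\cite[Theorem 3.6]{AAproperties}, this yields $f = f_+*\ell_+^J + f_-*\ell_-^J$.

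For uniqueness, suppose $f_+*\ell_+^J + f_-*\ell_-^J = \tilde f_+*\ell_+^J + \tilde f_-*\ell_-^J$. I would $*$-multiply both sides on the right by $\ell_+^J$ and use the relations $\ell_+^J*\ell_+^J = \ell_+^J$ and $\ell_+^J*\ell_-^J \equiv 0$ recorded in the Remark after the definition of $\ell_\pm^J$; this isolates $(f_+ - \tilde f_+)*\ell_+^J \equiv 0$. Because $\ell_+^J$ is nowhere a left zero-divisor on its range of validity — equivalently, reading the identity on the semi-slice $\C_J^+$ where $\ell_+^J$ is the constant $\tfrac{1-IJ}{2}$ acting invertibly — one concludes $f_+ = \tilde f_+$, and symmetrically $f_- = \tilde f_-$.

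The main obstacle I anticipate is the careful handling of the $*$-product together with the $\ell_\pm^J$ idempotents, since the $*$-product is noncommutative and the various conjugations $(\cdot)^c$ and the splitting components do not obviously commute past the idempotents. The key technical device defusing this is precisely the commutation identity in Remark~\ref{changebasis}, namely $(a^\top+a^\bot)*\ell_+^J + (b^\top+b^\bot)*\ell_-^J = \ell_+^J(a^\top+b^\bot) + \ell_-^J(a^\bot+b^\top)$, which converts the $*$-products into honest pointwise multiplications on each semi-slice. Once this is in hand the verification is routine; without it, tracking how $K$ and the reflected functions interact with the idempotents would be the delicate point.
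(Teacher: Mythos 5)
Your existence argument coincides with the paper's own proof: the theorem is stated in the paper precisely as a summary of the computation that precedes it, namely the Splitting Lemma in the form~\eqref{split23}, the definition of $f_\pm$ in~\eqref{effe+}, the commutation rule of Remark~\ref{changebasis} used to push the idempotents to the left, restriction to the two semi-slices $\Omega\cap\C_J^{\pm}$, and the Identity Principle of~\cite[Theorem 3.6]{AAproperties}. That half of your proposal is essentially a transcription of the paper's argument and is fine.

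The uniqueness step, however, contains a genuine error. From $(f_+-\tilde f_+)*\ell_+^J\equiv 0$ you cannot cancel $\ell_+^J$: on a circular domain with $\Omega\cap\R=\emptyset$ the idempotent $\ell_+^J$ \emph{is} a zero divisor in the ring of slice regular functions, since $\ell_-^J*\ell_+^J=\ell_+^J*\ell_-^J=(\ell_+^J)^s\equiv 0$ while $\ell_-^J\not\equiv 0$. Consequently the kernel of right $*$-multiplication by $\ell_+^J$ contains every function of the form $a*\ell_-^J$; for instance $(1+\ell_-^J)*\ell_+^J=\ell_+^J$, so the pairs $(1,0)$ and $(1+\ell_-^J,0)$ both represent $f=\ell_+^J$. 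Your proposed justification --- that on $\C_J^+$ the factor $\ell_+^J$ equals the constant $1$ and therefore ``acts invertibly'' --- fails because the $*$-product does not restrict to the pointwise product on a slice unless the left factor is $\C_J$-preserving: e.g.\ $j*\ell_+=\ell_-\,j$, whose restriction to $\C_i^+$ is $0$, not $j$. What your right-multiplication by $\ell_\pm^J$ actually proves is uniqueness of the Peirce components $f_+*\ell_+^J$ and $f_-*\ell_-^J$; the representatives $f_\pm$ themselves are unique only modulo $\{a*\ell_-^J\}$, resp.\ $\{b*\ell_+^J\}$, equivalently once one normalizes $f_\pm$ to the form~\eqref{effe+}, in which $f_+$ carries exactly the data $(g,\hat h)$ and $f_-$ exactly $(\hat g,h)$, both recoverable from the restrictions of $f_+*\ell_+^J$ and $f_-*\ell_-^J$ to the two semi-slices. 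That is the reading under which the uniqueness assertion holds. Note that the paper supplies no separate uniqueness argument (the theorem follows the word ``Summarising''), so the burden you took on is real, but discharging it requires identifying the kernel just described rather than performing a cancellation that the idempotent does not permit.
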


\begin{remark}
The previous theorem is the restatement, in the context of slice regularity, of the well-known \textit{Peirce decomposition} (see~\cite[Chapter 7, \S 21]{lam}). 
\end{remark}

\begin{remark}\label{obvdec}
If $\Omega\cap\R\neq\emptyset$, then, given any slice regular function $f:\Omega\to\R$, its restriction
$f|_{\Omega\setminus\R}$ can be written as
$$
f|_{\Omega\setminus\R}=f*\ell_{+}^{J}+f*\ell_{-}^{J}=f*(\ell_{+}^{J}+\ell_{-}^{J}),
$$
for any $J\in\SF$.
\end{remark}

The class of functions that, as we will see, furnishes a new tool in twistor geometry is the following. 

\begin{defi}[slice-polynomial functions]\label{slpol}
A slice regular function $P:\HH\setminus\R\to\HH$ is said to be a \textit{slice-polynomial function} if there exist $J\in\SF$ and two 
quaternionic polynomials $P_+,P_-:\HH\to\HH$ such that
$$
P=P_+*\ell_+^J+P_-*\ell_-^J.
$$
\end{defi}

\noindent The next proposition shows that Definition~\ref{slpol} is well posed.

\begin{prop}\label{slicepolprop}
Let $P:\HH\setminus\R\to\HH$ be a slice-polynomial function, then for any $K\in\SF$, there exist two 
quaternionic polynomials $Q_+,Q_-:\HH\to\HH$ such that
$$
P=Q_+*\ell_+^K+Q_-*\ell_-^K.
$$
\end{prop}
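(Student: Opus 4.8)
The plan is to build the decomposition in the $K$-frame out of a single elementary observation: since $\ell_+^K+\ell_-^K\equiv 1$, we have $P=P*1=P*\ell_+^K+P*\ell_-^K$. It therefore suffices to produce two quaternionic polynomials $Q_+,Q_-$ with $P*\ell_+^K=Q_+*\ell_+^K$ and $P*\ell_-^K=Q_-*\ell_-^K$; adding these two identities yields the claim. I would treat only the $\ell_+^K$ case in full, the other being verbatim the same with the roles of $+$ and $-$ interchanged. Starting from the defining expression $P=P_+*\ell_+^J+P_-*\ell_-^J$ and using associativity and distributivity of the $*$-product, one gets $P*\ell_+^K=P_+*(\ell_+^J*\ell_+^K)+P_-*(\ell_-^J*\ell_+^K)$, so everything reduces to understanding the two ``mixed'' products of idempotents $\ell_+^J*\ell_+^K$ and $\ell_-^J*\ell_+^K$.

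The key step I would prove is that each such mixed product is \emph{concentrated on $\ell_+^K$}, i.e. there exist constants $a,b\in\HH$ with $\ell_+^J*\ell_+^K=a*\ell_+^K$ and $\ell_-^J*\ell_+^K=b*\ell_+^K$. To see this, note first that a $*$-product of slice-constant functions is slice-constant (the product of locally constant stem functions is locally constant), so by Remark~\ref{changebasis} I may write $\ell_+^J*\ell_+^K=a*\ell_+^K+a'*\ell_-^K$ for suitable constants. Right-multiplying this identity (in the $*$-product) by $\ell_-^K$ and using associativity together with the idempotent relations $\ell_+^K*\ell_-^K=0$ and $\ell_-^K*\ell_-^K=\ell_-^K$, the left-hand side becomes $\ell_+^J*(\ell_+^K*\ell_-^K)=0$ while the right-hand side collapses to $a'*\ell_-^K$; hence $a'*\ell_-^K=0$ and $\ell_+^J*\ell_+^K=a*\ell_+^K$. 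The same computation applied to $\ell_-^J*\ell_+^K$ gives the constant $b$.

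With the concentration claim in hand, I would finish by moving the constants into the polynomial factors: $P*\ell_+^K=P_+*(a*\ell_+^K)+P_-*(b*\ell_+^K)=(P_+*a)*\ell_+^K+(P_-*b)*\ell_+^K$, again by associativity. Since $a$ is a constant quaternion, the $*$-product of $P_+$ with the constant function $a$ coincides with ordinary right multiplication of the coefficients of $P_+$ by $a$, so $P_+*a$ is again a quaternionic polynomial (and likewise $P_-*b$). Setting $Q_+:=P_+*a+P_-*b$, a sum of polynomials and hence a polynomial, gives $P*\ell_+^K=Q_+*\ell_+^K$, as required. The main obstacle, and the only genuinely delicate point, is noncommutativity: one must keep $\ell_+^K$ on the right throughout and resist commuting the constants $a,b$ past the polynomials or the idempotents. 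Indeed, a more direct route substituting the change-of-basis formula~\eqref{changeunits} would immediately express $P$ through $\ell_\pm^K$ but with the transition factors $\tfrac{1\mp KJ}{2}$ appearing as pointwise \emph{right} constants; the concentration claim above is precisely the device that converts those right-hand constants into honest left $*$-polynomial factors while preserving polynomiality.
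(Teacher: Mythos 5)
Your proof is correct and takes essentially the same route as the paper: both arguments reduce to Remark~\ref{changebasis} (every slice-constant function expands as $a*\ell_+^K+b*\ell_-^K$) combined with associativity and distributivity of the $*$-product to absorb the resulting constant factors into the polynomial coefficients, so that $Q_\pm$ are again quaternionic polynomials. Your ``concentration'' lemma $\ell_\pm^J*\ell_+^K=a*\ell_+^K$ is precisely what the paper's substitution of the change-of-basis expansion gives after $*$-multiplication by $\ell_+^K$, so the two proofs differ only in bookkeeping (the paper additionally records the symmetry $a=d$, $b=c$ from~\eqref{changeunits}, which your four-constant version does not need).
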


\begin{proof}
Thanks to Remark~\ref{changebasis}, for any $K\in\SF$, 
there exist $a,b,c,d\in\HH$ such that
$$
\ell_+^J=a*\ell_{+}^{K}+b*\ell_{-}^{K},\quad\ell_-^J=c*\ell_{+}^{K}+d*\ell_{-}^{K}.
$$
Thanks to Equation~\eqref{changeunits}, by standard computations, $a=d$ and $b=c$, therefore 
\begin{align*}
P&=P_+*\ell_+^J+P_-*\ell_-^J\\
&= P_+*(a*\ell_{+}^{K}+b*\ell_{-}^{K})+P_-*(b*\ell_{+}^{K}+a*\ell_{-}^{K})\\
&=(P_+a+P_-b)*\ell_+^K+
(P_+b+P_-a)*\ell_-^K.
\end{align*}

\end{proof}

\noindent As a consequence, we get that the restriction of a slice-polynomial function to any semi-slice is a polynomial and moreover we have the following characterization.

\begin{prop}\label{eachslicepol}
Let $P:\HH\setminus\R\to\HH$ be a slice regular function. $P$ is a slice
polynomial if and only if for any $K\in\SF$ the restriction 
$P|_{\C_K^+}$ is a polynomial in the variable $\alpha+K\beta\in\C_K^+$.
\end{prop}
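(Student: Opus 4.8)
The plan is to prove both implications, the engine for each being an explicit formula for the restriction of a $*$-product with $\ell_\pm^K$ to a semi-slice. The key computation I would isolate first is this: for a quaternionic polynomial $R(q)=\sum_n q^n r_n$ and any $K\in\SF$, I claim that on $\C_K^+$ one has $(R*\ell_+^K)|_{\C_K^+}=(R|_{\C_K^+})^\top$ and $(R*\ell_-^K)|_{\C_K^+}=(R|_{\C_K^+})^\bot$, while on $\C_K^-$ the two projections are swapped. To see this I would write $R*\ell_+^K=\sum_n q^n*(r_n*\ell_+^K)$ using associativity of $*$, rewrite each constant factor by Remark~\ref{changebasis} (with $b=0$) as $r_n*\ell_+^K=\ell_+^K r_n^\top+\ell_-^K r_n^\bot$, and then use that the monomials $q\mapsto q^n$ are slice-preserving, so the outer $*$ becomes a pointwise product. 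Evaluating at a point of $\C_K^+$, where $\ell_+^K\equiv1$ and $\ell_-^K\equiv0$, collapses the sum to $\sum_n v^n r_n^\top=(R(v))^\top$ (using $v^n\in\C_K$ and $\C_K\cdot\C_K\subseteq\C_K$, $\C_K\cdot\C_K^\bot\subseteq\C_K^\bot$), and symmetrically for the other three cases.

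For the \emph{only if} direction I would first invoke Proposition~\ref{slicepolprop} to rewrite $P=Q_+*\ell_+^K+Q_-*\ell_-^K$ with $Q_\pm$ polynomials adapted to the chosen $K$. The formula above then gives, for $w=\alpha+K\beta\in\C_K^+$,
\[
P(w)=(Q_+(w))^\top+(Q_-(w))^\bot=\sum_n w^n\bigl(a_n^\top+b_n^\bot\bigr),
\]
where $Q_+=\sum q^na_n$, $Q_-=\sum q^nb_n$; this is manifestly a polynomial in the variable $w=\alpha+K\beta$, as required.

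For the \emph{if} direction I would fix any $J\in\SF$ and construct the polynomials directly rather than going through the $\ext$/Schwarz formulas. By hypothesis $P|_{\C_J^+}(v)=\sum_n v^nc_n$ is a polynomial, and since $\C_J^-=\C_{-J}^+$, the hypothesis applied to $-J$ gives that $P|_{\C_J^-}(u)=\sum_n u^ne_n$ is also a polynomial. Matching these against the target identities on the two semi-slices, namely $P=(P_+)^\top+(P_-)^\bot$ on $\C_J^+$ and $P=(P_+)^\bot+(P_-)^\top$ on $\C_J^-$, forces the coefficient relations $c_n^\top=\alpha_n^\top$, $c_n^\bot=\gamma_n^\bot$, $e_n^\bot=\alpha_n^\bot$, $e_n^\top=\gamma_n^\top$, where $P_+=\sum q^n\alpha_n$, $P_-=\sum q^n\gamma_n$. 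These are solved by the \emph{global} polynomials $P_+(q):=\sum_n q^n(c_n^\top+e_n^\bot)$ and $P_-(q):=\sum_n q^n(e_n^\top+c_n^\bot)$. Re-running the semi-slice formula shows $P_+*\ell_+^J+P_-*\ell_-^J$ agrees with $P$ on all of $\C_J\setminus\R$, so by the Representation Formula (equivalently the Identity Principle) the two slice regular functions coincide on $\HH\setminus\R$; hence $P$ is slice-polynomial by Definition~\ref{slpol}.

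The main obstacle, and the point deserving the most care, is the semi-slice bookkeeping in the \emph{if} direction. Because the domain $\HH\setminus\R$ is disconnected by the removal of the real axis, data on a single semi-slice cannot determine $P_\pm$: the $\C_J$-component of $P_+$ is visible only on $\C_J^+$, whereas its $\C_J^\bot$-component is visible only on $\C_J^-$. The idempotents $\ell_\pm^J$ are tailored to exactly this splitting (they are $1$ and $0$ on $\C_J^+$ and swap roles on $\C_J^-$), so the $\top/\bot$ decomposition lets the two semi-slices $\C_J^+$ and $\C_J^-=\C_{-J}^+$ supply complementary halves of the coefficients and thereby assemble the global polynomials $P_\pm$. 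I would also take care to justify the pointwise-product reduction of the $*$-product (via slice-preservation of monomials) rather than asserting it, since the naive ``restrict and multiply'' rule fails for $\C_K$-preserving factors.
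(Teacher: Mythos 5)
Your proposal is correct and follows essentially the same route as the paper: the same reduction of the $*$-product with $\ell_\pm^K$ to pointwise formulas via Remark~\ref{changebasis} and slice-preservation of monomials, the same use of Proposition~\ref{slicepolprop} together with $\ell_+^K\equiv 1$, $\ell_-^K\equiv 0$ on $\C_K^+$ for the forward direction, and the same coefficient assembly $P_+=\sum q^n(c_n^\top+e_n^\bot)$, $P_-=\sum q^n(e_n^\top+c_n^\bot)$ for the converse. The only difference is that you spell out explicitly (including the appeal to the Representation Formula) what the paper compresses into ``reading the previous chain of equalities in the opposite direction,'' which is a faithful unwinding rather than a new argument.
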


\begin{proof}

First of all notice that, if $q=\alpha+I\beta\in\HH$ and $v=\alpha+i\beta\in\C_{i}$, then, by direct computation
$$
q*\ell_{+}=\ell_{+}v,\qquad q*\ell_{-}=\ell_{-}\bar v.
$$
Therefore, for any $k,h\in\N$ and any $a,b\in\HH$ we have that 
\begin{eqnarray*}
(q^{k}a)*\ell_{+}+(q^{h}b)*\ell_{-} & = & (q^{k}a^{\top}+q^{k}a^{\bot})*\ell_{+}+(q^{h}b^{\top}+q^{h}b^{\bot})*\ell_{-}\\
&=& \ell_{+}(v^{k}a^{\top}+v^{h}b^{\bot})+\ell_{-}(\bar v^{k}a^{\bot}+\bar v^{h}b^{\top}).
\end{eqnarray*}
Using the last computation, if $P$ is a slice-polynomial function, we obtain the following equality
\begin{eqnarray*}
P(q) & = &\left(\sum_{k=0}^{N}q^{k}a_{k}\right)*\ell_{+}+\left(\sum_{h=0}^{M}q^{h}b_{h}\right)*\ell_{-}\\
& = & \left(\sum_{k=0}^{N}q^{k}a_{k}^{\top}+q^{k}a_{k}^{\bot}\right)*\ell_{+}+\left(\sum_{h=0}^{M}q^{h}b_{h}^{\top}+q^{h}b_{h}^{\bot}\right)*\ell_{-}\\
& = & \ell_{+}\left(\sum_{k=0}^{N}v^{k}a_{k}^{\top}+\sum_{h=0}^{M}v^{h}b_{h}^{\bot}\right)+\ell_{-}\left(\sum_{k=0}^{N}\bar v^{k}a_{k}^{\bot}+\sum_{h=0}^{M}\bar v^{h}b_{h}^{\top}\right).
\end{eqnarray*}

\noindent Therefore, thanks to Proposition~\ref{slicepolprop}   and the fact that, for any $K\in\SF$,
$\ell_+^K\equiv 1$ and $\ell_-^K\equiv 0$ on $\C_K^+$, 
we immediately have the thesis. The opposite implication follows by reading the previous chain of equalities in the opposite direction.
\end{proof}

Thanks to Remark~\ref{obvdec}, any quaternionic polynomial $P:\mathbb{H}\to\HH$ is a slice-polynomial function if restricted to $\HH\setminus\R$. In fact, it holds
$$
P|_{\HH\setminus \R}=P*\ell_{+}+P*\ell_{-}.
$$

\begin{example}\label{firstexample}
It is easy to generate examples of slice-polynomial functions. However we recall here some 
slice-polynomial functions naively introduced in past researches by the first author~\cite{AAproperties, AA, AAtwistor}.
In~\cite{AAproperties} the function $P(q)=q*\ell_+$ was introduced and analysed for its topological properties,
showing in particular that it admits the whole semi-slice $\C_i^-$ as zero set. Then in~\cite{AA}
slice regular functions defined over circular domains non intersecting the real axis were studied from a differential point of view
and it was shown that the function $Q(q)=(q+j)*\ell_{+}$ admits two non-compact surfaces, both
biholomorphic to the half-plane $\C^+$, where it is constant. Finally in~\cite{AAtwistor}
the twistor geometry induced by $P(q)$ was studied and the function $R(q)=-q^{2}*\ell_{+}+q*\ell_{-}$ 
was proposed as an example of slice regular function whose twistor lift parameterises a cubic scroll.
All the functions listed in this example are slice-polynomial functions.
\end{example}

Thanks to Proposition~\ref{slicepolprop}, from now on, if not differently specified, we will always write slice-polynomial functions with respect to the imaginary unit $i$:
\begin{equation}\label{slicepol}
P=P_+*\ell_++P_-*\ell_-.
\end{equation}
We want now to express slice-polynomial functions in terms of the Splitting Lemma.

If $P$ is a slice-polynomial function which splits on $\C_i$ as 
\begin{equation}\label{standardsplitting}
P(v):=\begin{cases}
g(v)+h(v)j & v\in\C_{i}^{+}\\
\overline{\hat{g}(\bar v)}+\overline{\hat{h}(\bar v)}j & v\in \C_{i}^{-},
\end{cases}
\end{equation}
we have that $g,\hat{g},h,\hat{h}$ are polynomials with coefficients in $\C_i$.
\noindent Moreover, from the computations in the proof of Proposition~\ref{eachslicepol} and Equation~\eqref{splitting} we have that 
\begin{equation*}
\begin{split}
P(\alpha+I\beta)&=\frac{1-Ii}{2}(g(\alpha+i\beta)+h(\alpha+i\beta)j)+\frac{1+Ii}{2}(\overline{\hat{g}((\alpha+i\beta)}+\overline{\hat{h}((\alpha+i\beta)}j)\\
&= ((g+\hat{h}^cj)*\ell_++(\hat{g}^c+hj)*\ell_-)(\alpha+I\beta)
\end{split}
\end{equation*}
where in the last equality, with a slight abuse of notation, we are identifying $g,\hat{g},h,\hat{h}$ with their regular extensions.
Comparing the last representation with Formula~\eqref{slicepol}, we obtain that $P_+=g+\hat{h}^cj$
and $P_-=\hat{g}^c+hj$.

\begin{example}
Let us consider the polynomial $P(q)=q^{2}+qi$, then $h=\hat h\equiv 0$ and $g(v)=\overline{\hat{g}(\bar v)}=v^{2}+vi$. Therefore $\overline{\hat{g}(v)}=\bar v^{2}+\bar v i$
$$
P(q)=q^{2}+qi=\ell_{+}(v^{2}+vi)+\ell_{-}(\bar v^{2}+\bar v i).
$$
The three slice-polynomial functions introduced in Example~\ref{firstexample} can be written as follows
$$
P(q)=\ell_+ v,\qquad Q(q)=\ell_+ v+\ell_- j,\qquad R(q)=\ell_+(-v^2)+\ell_-\bar v.
$$
\end{example}

\subsection{Extension to the real line}\label{extensionsection}

Given a slice-polynomial function $P=\mathcal{I}(F_{1}+\imath F_{2})$ we want to analyse its behaviour approaching the real line. Of course, since for any $I\in\mathbb{S}$ the function $P|_{\mathbb{C}_{I}^{+}}$ is a polynomial, then it can be extended to $\mathbb{R}$ but the values that it reaches strongly depend on $I$.
The right set to consider is therefore $\mathbb{S}\times \mathbb{R}$ and not merely $\mathbb{R}$.
An element $(I,\alpha)\in\mathbb{S}\times\mathbb{R}$ will also be denoted by $\alpha_{I}$.
We define then the following function
\begin{equation*}
P_{\mathbb{R}}:\mathbb{S}\times\mathbb{R}\to\mathbb{H}
\end{equation*}
\begin{equation}\label{pr}
 P_{\mathbb{R}}(\alpha_{I}) =P_{\R}(I,\alpha):=\lim_{\beta\to 0^+}P(\alpha+I\beta) =\lim_{\beta\to 0^+}F_{1}(\alpha+i\beta)+IF_{2}(\alpha+i\beta)=F_{1}(\alpha)+IF_{2}(\alpha)
\end{equation}
%
%
where the limit always exists, since $P|_{\mathbb{C}_{I}^{+}}$ is a polynomial on $\mathbb{C}_{I}^{+}$.
We define, then, the following sets
\begin{equation*}
P_{\mathbb{R}}(\alpha):=\bigcup_{I\in\mathbb{S}}P_{\mathbb{R}}(\alpha_{I}),\quad
P_{\mathbb{R}}(\mathbb{R}):=P_{\mathbb{R}}(\mathbb{S}\times\mathbb{R})
\end{equation*}

Notice that, if $P=\mathcal{I}(F_{1}+\imath F_{2})$ is a slice-polynomial function and there exists $\alpha\in\R$ such that $P(\alpha)=\{q\}$ for some $q\in\HH$, then, for any $I,J\in\SF$,
\[F_1(\alpha)+IF_2(\alpha)=q=F_1(\alpha)+JF_2(\alpha),\]
which entails that $F_2(\alpha)=0$.
For this reason the slice-polynomial function $P$ can be extended to $\alpha\in\R$ as $P(\alpha)=q$.

If $P$ is a quaternionic polynomial, then, thanks to the odd character of $F_{2}$ with respect to $\beta$,
for any $\alpha\in\mathbb{R}$, $F_{2}(\alpha)=0$
and so $P_{\mathbb{R}}(\alpha)=\{P(\alpha)\}$ and $P_{\mathbb{R}}(\mathbb{R})=P(\mathbb{R})$.
If $P$ is not a quaternionic polynomial, then, $P_{\R}(\alpha)$
can be of dimension $2$. 

\begin{example}
Consider the slice-polynomial function $P(q)=q*\ell_+$, or, explicitly, $P(\alpha+I\beta)=(\alpha+I\beta)*\ell_+=(\alpha+I\beta)\frac{1-Ii}{2}$. 
Therefore $P(\alpha+I\beta)=\frac{1}{2}(\alpha+I\beta-\alpha Ii+\beta i)$ and for any $I\in\SF$ and any $\alpha\in\R$
$$
P_{\R}(\alpha_{I})=\lim_{\beta\to0^+}P(\alpha+I\beta)=\alpha\frac{(1-Ii)}{2}.
$$
If $I=Ai+Bj+Ck$, then  $P|_{\R}(\alpha_{I})$ belongs to the real hyperplane $\{q_0+q_1i+q_2j+q_3k\in\HH\,|\,q_1=0\}$.
Hence, for any $I\in\SF\setminus\{-i\}$, $P|_{\mathbb{C}_{I}^{+}}(\mathbb{R})$ is a real line and for any $\alpha\neq 0$,
$P_\R(\alpha)$ is a 2-sphere. It is not difficult to see that $P_\R(\R)$ covers the whole hyperplane $\{q_0+q_1i+q_2j+q_3k\in\HH\,|\,q_1=0\}$ 
(see~\cite[Theorem 34]{AAtwistor}).

With similar computations, if $Q(q)=(q+j)*\ell_{+}$, $R(q)=-q^{2}*\ell_{+}+q*\ell_{-}$ and $q=\alpha+I\beta$, then 
$$
Q|_{\R}(\alpha_{I})=\lim_{\beta\to0^+}Q(\alpha+I\beta)=\alpha\frac{(1-Ii)}{2}+\frac{(1+Ii)}{2}j.
$$
and
$$
R|_{\R}(\alpha_{I})=\lim_{\beta\to0^+}R(\alpha+I\beta)=-\alpha^2\frac{(1-Ii)}{2}+\alpha\frac{(1+Ii)}{2}.
$$
For any fixed $I\in\SF\setminus\{-i\}$ the set $Q_\R(\alpha_I)$ parameterises an affine line in $\HH$ passing through the point $\frac{(1+Ii)}{2}j$,
therefore the resulting set $Q_\R(\R)$ is a ruled $3$-dimensional manifold. Finally, for any $I\in\SF$, $R_\R(\alpha_I)$ parameterises a conic
in $\HH$. In particular $R_{\R}(\alpha_{i})$ is a parabola while 
$R_{\R}(\alpha_{-i})$ is a straight line, both  passing through the origin.
\end{example}

\begin{remark}\label{parterealedegenere}
Given a slice-polynomial function $P=\mathcal{I}(F_{1}+\imath F_{2})$,  the sphere $\SF_{v}$ is degenerate for $P$ if and only if $\partial_{s}P(v)=0$. Writing the quantity $P(v)-P(\bar v)$ in terms of the splitting~\eqref{standardsplitting} we get, 
$$
\overline{\hat{g}(v)}+\overline{\hat{h}(v)}j-g(v)-h(v)j=0,
$$
that is equivalent to the system
\begin{equation}\label{systemf2}
\begin{cases}
\overline{\hat{g}(v)}-g(v)=0\\
\overline{\hat{h}(v)}-h(v)=0.
\end{cases}
\end{equation}
Since $g,h,\hat g$ and $\hat h$ are polynomials, the last notion can be extended to $\mathbb{R}$
(and so to $P_{\mathbb{R}}$), saying that a real point $\alpha$ is degenerate for $P$ if and only if
$P_{\mathbb{R}}(\alpha)=\{q\}$ i.e. $P$ can be defined at $\alpha$ as $P(\alpha)=q$.
In particular, with this notion, any quaternionic polynomial $P$ defined on $\mathbb{H}$ is such that
the real line is degenerate.
\end{remark}
\begin{remark}
We point out that all the results proved in this subsection can be restored for any 
slice regular function suitable defined, i.e. for any slice regular function  $f=\mathcal{I}(F_{1}+\imath F_{2})$ defined on a circular domain $\Omega \subseteq \HH\setminus\R$ not intersecting the real axis, such that $F_{1}$ and $F_{2}$ extend continuously to $\R$.
\end{remark}

\subsection{Companion of a slice regular function} 
Let us introduce the notion of \textit{companion} of a slice regular function. Given a slice regular function $f$, its companion is in some sense dual to $f$ and, besides its algebraic definition, we will see that it naturally arises in the geometric construction described in Section \ref{lift}. 

\begin{definition}\label{defcompanion}
Let $f:\Omega\to\HH$ be a slice regular function and suppose that $f|_{\Omega \setminus \R}=f_+*\ell_++f_-*\ell_-$. We define the \textit{companion} of $f$ as the slice regular function $f^\vee:\Omega \setminus \R\to\HH$, defined by
$f^\vee=f_-*\ell_++f_+*\ell_-$.
\end{definition}
\begin{remark}
The previous definition is well posed. Suppose in fact that 
$f=f_+*\ell_++f_-*\ell_-=g_{+}*\ell_{+}^{J}+g_{-}*\ell_{-}^{J}$, for some $J\in\SF$, $J \neq i$. Then,
if $\ell_{+}=a*\ell_{+}^{J}+b*\ell_{-}^{J}$ (and so $\ell_{-}=b*\ell_{+}^{J}+a*\ell_{-}^{J}$), for suitable
$a,b\in\HH$, we have that
\begin{align*}
f&=f_+*\ell_++f_-*\ell_-=f_+*(a*\ell_{+}^{J}+b*\ell_{-}^{J})+f_-*(b*\ell_{+}^{J}+a*\ell_{-}^{J})\\ &=[f_{-}b+f_{+}a]*\ell_{+}^{J}+[f_{-}a+f_{+}b]*\ell_{-}^{J}=g_{+}*\ell_{+}^{J}+g_{-}*\ell_{-}^{J}
\end{align*}
and hence that 
$$
f^{\vee}=[f_{-}a+f_{+}b] *\ell_{+}^{J}+[f_{-}b+f_{+}a]*\ell_{-}^{J}=g_{-}*\ell_{+}^{J}+g_{+}*\ell_{-}^{J}.
$$
\end{remark}
Thanks to Remark~\ref{obvdec}, if $\Omega\cap\R\neq \emptyset$, then the companion $f^{\vee}$ of $f:\Omega\to\HH$ is
$f$ itself, i.e.: $f^{\vee}=f$.

%

\begin{remark}\label{checkJ}
Let $f:\Omega\to\HH$ be any slice regular function that splits on $\C_{J}$ as
$f|_{\C_{J}}=G(v)+H(v)K$, as in Formula~\eqref{split2}. Then, thanks to Equations \eqref{splitting} and \eqref{effe+}, we get that
its companion $f^\vee:\Omega\to\HH$ splits on $\mathbb{C}_J$  as
\begin{equation*}
f^\vee(v):=\begin{cases}
\overline{\hat{g}(\bar v)}+\overline{\hat{h}(\bar v)}K & v\in\C_{J}^{+}\\
g(v)+h(v)K  & v\in \C_{J}^{-}.
\end{cases}
\end{equation*}
In particular  if $P$ is a slice-polynomial function, then its companion $P^{\vee}$ is a slice-polynomial function, and 
if $P$ can be written at $\alpha+I\beta$ as $P(\alpha+I\beta)=\ell_+^Jp_+(\alpha+J\beta)+\ell_-^Jp_-(\alpha-J\beta)$, then
$P^\vee=\ell_+^Jp_-(\alpha+J\beta)+\ell_-^Jp_+(\alpha-J\beta)$.
\end{remark}

\begin{prop}\label{degeneratecoincide}
Let $f$ be any slice regular function. Then the degenerate sets of $f$ and $f^{\vee}$ coincide.
\end{prop}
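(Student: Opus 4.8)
The plan is to reduce the statement to the vanishing of the spherical derivative and then to compare $f$ and $f^\vee$ through their splittings over a fixed slice. First I would dispose of the trivial case: if $\Omega\cap\R\neq\emptyset$, then by the observation following Definition~\ref{defcompanion} one has $f^\vee=f$, so there is nothing to prove. Hence I assume $\Omega\cap\R=\emptyset$. Recall that a sphere $\SF_v$ is degenerate for a slice regular function exactly when its spherical derivative vanishes at $v$, and since $\IIm(v)$ is invertible on $\Omega\setminus\R$, this is equivalent to $f(v)=f(\bar v)$. Because this condition is constant along each sphere $\SF_v$, it suffices to test it for $v=\alpha+J\beta\in\C_J^+$ for a single fixed $J\in\SF$.

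Next I would fix such a $J$ and write $f$ through its splitting~\eqref{split2}, so that on $\C_J^+$ we have $f(v)=g(v)+h(v)K$ while on $\C_J^-$ we have $f(w)=\overline{\hat g(\bar w)}+\overline{\hat h(\bar w)}K$, with $g,h,\hat g,\hat h$ holomorphic on $\C_J^+$ and extended to $\C_J^-$ by Schwarz reflection. By Remark~\ref{checkJ} the companion $f^\vee$ splits over $\C_J$ with the two branches interchanged. Abbreviating $a=g(v)$, $b=h(v)$, $\hat a=\hat g(v)$, $\hat b=\hat h(v)\in\C_J$, and using $\hat g(\bar v)=\overline{\hat g(v)}$ and $g(\bar v)=\overline{g(v)}$ (and similarly for $h,\hat h$), a direct evaluation gives
\[
f(v)-f(\bar v)=(a-\overline{\hat a})+(b-\overline{\hat b})K,\qquad f^\vee(v)-f^\vee(\bar v)=(\hat a-\overline a)+(\hat b-\overline b)K.
\]

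Since $K$ is orthogonal to $J$, the $\C_J$-component and the $\C_J K$-component vanish separately, so $\SF_v$ is degenerate for $f$ if and only if $a=\overline{\hat a}$ and $b=\overline{\hat b}$, and degenerate for $f^\vee$ if and only if $\hat a=\overline a$ and $\hat b=\overline b$. Conjugating the first pair of equations turns them precisely into the second pair, so the two conditions are equivalent for every $v$. This yields $\mathcal{D}_f=\mathcal{D}_{f^\vee}$, which is the claim.

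I expect the only delicate point to be the bookkeeping of the Schwarz-reflected extensions, namely keeping track of which branch ($\C_J^+$ or $\C_J^-$) each of $g,h,\hat g,\hat h$ is evaluated on when forming $f(\bar v)$, $f^\vee(v)$ and $f^\vee(\bar v)$, together with the use of $K\perp J$ to decouple the two scalar conditions. Once these are handled correctly, the equivalence of the two degeneracy conditions is nothing more than complex conjugation in $\C_J$.
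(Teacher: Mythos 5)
Your proof is correct and follows essentially the same route as the paper's: both reduce degeneracy of $\SF_v$ to the vanishing of the spherical derivative, i.e.\ to the condition $f(v)=f(\bar v)$ expressed through System~\eqref{systemf2}, and both use the swapped splitting of $f^\vee$ from Remark~\ref{checkJ} to see that the degeneracy condition for $f^\vee$ is just the complex conjugate of that for $f$. Your write-up is merely a bit more explicit than the paper's --- decoupling the two scalar equations via $K\perp J$ and stating the equivalence in both directions, where the paper proves one inclusion and leaves the symmetric one as immediate.
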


\begin{proof}
Consider the splitting of $f$ defined as in Equation~\eqref{split2}. Let $\mathbb{S}_{q}$ be a sphere in the degenerate set of $f$ and let $\{v,\bar v\}=\SF_{q}\cap \C_{J}$. Then System~\eqref{systemf2} holds for $v$ and $\bar v$. Since 
\begin{equation*}
f^\vee(v):=\begin{cases}
\overline{\hat{g}(\bar v)}+\overline{\hat{h}(\bar v)}K & v\in\C_{J}^{+}\\
g(v)+h(v)K  & v\in \C_{J}^{-},
\end{cases}
\end{equation*}
it is immediate to check that $f^{\vee}(v)=f^{\vee}(\bar v)$ and hence that $\mathbb{S}_{q}$ is a sphere in the degenerate set of $f^{\vee}$.
\end{proof}

\begin{prop}
For any slice-polynomial function $P$ we have that $P_{\R}(\R)=P_{\R}^\vee(\R)$.
\end{prop}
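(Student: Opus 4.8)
The plan is to reduce the equality of the two sets to a single sign symmetry. For $\alpha\in\R$ write the boundary value of the stem function of $P=\mathcal I(F_1+\imath F_2)$ as $\lim_{\beta\to0^+}\big(F_1(\alpha+i\beta)+\imath F_2(\alpha+i\beta)\big)=F_1(\alpha)+\imath F_2(\alpha)$, so that by \eqref{pr} one has
\[
P_\R(\alpha)=\{F_1(\alpha)+IF_2(\alpha)\,:\,I\in\SF\},
\]
and, in terms of the analogous boundary data $F_1^\vee,F_2^\vee$ of the companion, $P^\vee_\R(\alpha)=\{F_1^\vee(\alpha)+IF_2^\vee(\alpha)\,:\,I\in\SF\}$. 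Since $\SF$ is invariant under $I\mapsto-I$, each of these sets is unchanged when $F_2$ (respectively $F_2^\vee$) is replaced by its opposite. Hence it suffices to prove, for every $\alpha\in\R$, that $F_1^\vee(\alpha)=F_1(\alpha)$ and $F_2^\vee(\alpha)=-F_2(\alpha)$: this already yields the stronger pointwise identity $P_\R(\alpha)=P^\vee_\R(\alpha)$, and taking the union over $\alpha\in\R$ gives $P_\R(\R)=P^\vee_\R(\R)$.

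First I would compute the boundary data of $P$. Writing $P=P_+*\ell_++P_-*\ell_-$, the stem function of $P$ is the pointwise product $\mathcal P_+L_+^i+\mathcal P_-L_-^i$, where $\mathcal P_\pm$ are the entire stem functions of the polynomials $P_\pm\colon\HH\to\HH$, and $L_\pm^i$ take the constant values $\tfrac{1\mp\imath i}{2}$ on $\C^+$. Evaluating the limit as $\beta\to0^+$ (i.e.\ from $\C^+$) at a real point $\alpha$, where $\mathcal P_\pm(\alpha)=P_\pm(\alpha)\in\HH$, a direct expansion gives
\[
F_1(\alpha)=\frac{P_+(\alpha)+P_-(\alpha)}{2},\qquad F_2(\alpha)=\frac{\big(P_-(\alpha)-P_+(\alpha)\big)i}{2}.
\]

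Next I would repeat the computation for the companion. By Definition~\ref{defcompanion} we have $P^\vee=P_-*\ell_++P_+*\ell_-$, so its boundary data are obtained from the formulas above simply by interchanging $P_+$ and $P_-$ (equivalently, one reads them off from the splitting of $P^\vee$ recorded in Remark~\ref{checkJ}). This interchange leaves $\tfrac{P_++P_-}{2}$ fixed and reverses the sign of $\tfrac{(P_--P_+)i}{2}$, so indeed $F_1^\vee(\alpha)=F_1(\alpha)$ and $F_2^\vee(\alpha)=-F_2(\alpha)$, completing the argument. I do not expect a genuine obstacle here; the only point demanding care is the bookkeeping of the complexification unit $\imath$ against the quaternionic unit $i$ (they commute, and conjugation in $\HH_{\C}$ fixes the quaternionic factor while sending $\imath\mapsto-\imath$), together with the fact that at real points the stem functions of the polynomials $P_\pm$ take genuinely quaternionic values, so that no spurious $\imath$-component survives in the limit.
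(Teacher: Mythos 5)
Your proof is correct and rests on the same symmetry as the paper's: the paper proves the pointwise identity $P_{\R}(\alpha_{J})=P^{\vee}_{\R}(\alpha_{-J})$ via Remark~\ref{checkJ}, while your boundary computation $F_1^{\vee}(\alpha)=F_1(\alpha)$, $F_2^{\vee}(\alpha)=-F_2(\alpha)$ combined with the invariance of $\SF$ under $I\mapsto -I$ is exactly that identity, derived directly from the stem-function product rather than quoted from the remark. Your version even records the slightly stronger set equality $P_{\R}(\alpha)=P^{\vee}_{\R}(\alpha)$ for each fixed $\alpha\in\R$, but the underlying argument is the same.
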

\begin{proof}
Fix any $J\in\SF$, then, thanks to Remark~\ref{checkJ}, we have that 
$$
P_{\R}(\alpha_{J})=\lim_{\beta\to 0^+}P_+(\alpha+J\beta)=P_{\R}^\vee(\alpha_{-J}).
$$
\end{proof}

\begin{remark}
As before, the last proposition can be restored for any 
slice regular function suitable defined, i.e. for any  $f=\mathcal{I}(F_{1}+\imath F_{2}):\Omega \subseteq \HH\setminus\R\to\HH$
such that $F_{1}$ and $F_{2}$ extend continuously to $\R$.
\end{remark}

\section{Twistor fibration and blow up of the real line}\label{lift}

In this section we recall some fact concerning the twistor interpretation of slice regular functions and we extend some known results contained in ~\cite{AAtwistor,gensalsto}.

Consider the \textit{left} quaternionic projective space $\HP$ as the set of equivalence classes 
$[q_1,q_2]\sim[pq_1,pq_2]$, for any $p\in\mathbb{H}\setminus\{0\}$. The twistor fibration: $\pi:\mathbb{CP}^3\rightarrow\mathbb{HP}^1$ is defined as $\pi[X_0,X_1,X_2,X_3]=[X_0+X_1j,X_2+X_3j]$. We embed $\mathbb{H}$ in $\mathbb{HP}^1$ as $q\mapsto [1,q]$. So that if $q=q_1+q_2j\in\mathbb{H}$ and $q_1,q_2\in\mathbb{C}_i$, then
\begin{equation*}
 \pi[X_0,X_1,X_2,X_3]=[1,q]\,\Leftrightarrow\, [1,(X_0+X_1j)^{-1}(X_2+X_3j)]=[1,q_1+q_2j],
\end{equation*}
and the last equality is equivalent to the following system which, therefore, describes the fibers of $\pi$
\begin{equation}\label{eqfiber}
 \begin{cases}
  X_2=X_0q_1-X_1\bar q_2\\
  X_3=X_0q_2+X_1\bar q_1.
 \end{cases}
\end{equation}

Let $\mathcal{Q}\simeq(\mathbb{CP}^1\times\mathbb{CP}^1)\subset\mathbb{CP}^3$ be the non-singular {\em Segre quadric} defined by the equation $X_0X_3=X_1X_2$, and
define the {\em slice complex structure} over $\mathbb{H}\setminus \mathbb{R}$ as $\mathbb{J}_{\alpha+I\beta}(v)=Iv$, for $\beta>0$ and for any $v\in T_{\alpha+I\beta}(\HH\setminus\R)\simeq \HH$.
Then, as proven in \cite{gensalsto}, the quadric $\mathcal{Q}$ contains two open subset  $\mathcal{Q}^+$ and  $\mathcal{Q}^-$ both
 biholomorphic, via the map $\pi$, to $(\mathbb{H}\setminus\mathbb{R},\mathbb{J})$.
The set $\mathcal{Q}^+\simeq (\mathbb{CP}^1\times\C^+)$ consists of points for which at least one of the following holds
 \begin{itemize}
 \item $X_0\neq 0$ and $X_2/X_0 \in\mathbb{C}^+$;
 \item $X_1\neq 0$ and $X_3/X_1 \in\mathbb{C}^+$. 
 \end{itemize}
 Analogously $\mathcal{Q}^-\simeq (\mathbb{CP}^1\times\C^-)\subset \mathcal{Q}$ is defined as the set of points  for which at least one of the following holds
 \begin{itemize}
 \item $X_0\neq 0$ and $X_2/X_0 \in\mathbb{C}^-$;
 \item $X_1\neq 0$ and $X_3/X_1 \in\mathbb{C}^-$. 
 \end{itemize}
%
%
To find coordinates for the fiber $\pi^{-1}(\alpha+I\beta)$ intersected with $\mathcal{Q}$ we will suppose $X_0\neq 0$. Denote then $I=Ai+Bj+Ck$ and $v=\alpha+i\beta$, with $\beta>0$.
Let $q=\alpha+I\beta=q_1+q_2j$, with $q_1=\alpha+\beta Ai$ and $q_2=\beta B+\beta Ci$.
Then the system defining the intersection,
\begin{equation*}
 \begin{cases}
  X_0X_3=X_1X_2\\
  X_2=X_0q_1-X_1\bar q_2\\
  X_3=X_0q_2+X_1\bar q_1
 \end{cases}
\end{equation*}
becomes 
\begin{equation*}
 X_0^2q_2+X_0X_1[\bar q_1 -q_1]+X_1^2\bar q_2=0
\end{equation*}
and imposing $X_0=1$, we get
\begin{equation*}
 q_2+X_1[\bar q_1 -q_1]+X_1^2\bar q_2=0.
\end{equation*}
Solving the last equation we find $X_1\in\left\{u=-i\frac{B+iC}{1+A}, w=i\frac{B+iC}{1-A}\right\}$, and hence
$$
\pi^{-1}(\alpha+I\beta)\cap \mathcal{Q}^+=[1,u,v,uv],\qquad \pi^{-1}(\alpha+I\beta)\cap \mathcal{Q}^-=[1,w,\bar v,w\bar v].
$$
%
%
%
Analogously we get
$$
\pi^{-1}(\alpha-I\beta)\cap \mathcal{Q}^+=[1,w,v,wv],\qquad\pi^{-1}(\alpha-I\beta)\cap \mathcal{Q}^-=[1,u,\bar v,u\bar v].
$$
Notice that $w=-\bar u^{-1}$.

%
%

Consider now a slice regular function $f:\HH\setminus\R\to\HH$ which splits on $\C_i$ as  in Formula \eqref{split2}.
As proven in \cite{AAtwistor,gensalsto}, $f$ can be lifted on $\mathcal{Q}^+$ with the following non-homogeneous parameterisation:
\begin{equation}\label{sollevamento+}
[1,f(q)] = \pi[\tilde{f}[\pi^{-1}(q)]]=\pi[\tilde{f}[1,u,v,uv]]=\pi[1,u,g(v)-u\hat{h}(v),h(v)+u\hat{g}(v)].
\end{equation}

With similar computations as in~\cite{AAtwistor,gensalsto} it is possible to prove that any slice regular function $f:\Omega\to\HH$ defined
on a circular domain $\Omega\subseteq \HH \setminus\R$, can be lifted to a holomorphic map $\tilde{f}_-:\mathcal{Q}^-\cap\pi^{-1}(\Omega)\to\mathbb{CP}^3$. The aim of the 
following lemma is to exhibit an explicit parameterisation for $\tilde{f}_-$.
\begin{lemma}\label{lemmaqminus}
Let $\Omega \subseteq \HH\setminus \R$ and let $f:\Omega\to\HH$ be a slice regular function that splits on $\C_i$ as in Formula~\ref{split2}. Then, the parameterisation of the twistor lift
$\tilde{f}_-:(\mathcal{Q}^-\cap\pi^{-1}(\Omega))\to\mathbb{CP}^3$ has coordinates
$$
\tilde{f}_-[1,w,v,wv]=[1,w,\overline{\hat{g}(v)}-w\overline{h(v)},\overline{\hat{h}(v)}+w\overline{g(v)}].
$$
\end{lemma}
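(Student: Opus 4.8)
The plan is to produce $\tilde{f}_-$ from the lift $\tilde{f}$ on $\mathcal{Q}^+$ already recorded in Formula~\eqref{sollevamento+}, by transporting it across the two sheets of the Segre quadric with the real structure of the twistor fibration. Concretely, I would introduce the antiholomorphic involution
\[
\sigma:\mathbb{CP}^3\to\mathbb{CP}^3,\qquad \sigma[X_0,X_1,X_2,X_3]=[-\overline{X_1},\overline{X_0},-\overline{X_3},\overline{X_2}],
\]
and first record its three relevant features. It is fixed-point free; it satisfies $\pi\circ\sigma=\pi$, so it permutes the fibres of $\pi$ (the short check is that, writing $q_1=X_0+X_1j$ and $q_2=X_2+X_3j$, one has $\pi(\sigma[X])=[jq_1,jq_2]=[q_1,q_2]$ after left multiplication by $j^{-1}$); and it preserves $\mathcal{Q}$ while interchanging $\mathcal{Q}^+$ and $\mathcal{Q}^-$. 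On fibre coordinates $\sigma$ realises exactly the relation $w=-\overline{u}^{-1}$ noted just before the lemma, sending the $\mathcal{Q}^+$ point $[1,u,v,uv]$ to the $\mathcal{Q}^-$ point $[1,-\overline{u}^{-1},\overline{v},-\overline{u}^{-1}\overline{v}]$ over the same base quaternion.

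With $\sigma$ in hand I would set $\tilde{f}_-:=\sigma\circ\tilde{f}\circ\sigma$ on $\mathcal{Q}^-\cap\pi^{-1}(\Omega)$. This map is holomorphic, being the composition of the holomorphic $\tilde{f}$ with two antiholomorphic maps, and it is a lift of $f$: indeed $\pi\circ\tilde{f}_-=\pi\circ\tilde{f}\circ\sigma=(f\circ\pi)\circ\sigma=f\circ\pi$, using $\pi\circ\tilde{f}=f\circ\pi$ and $\pi\circ\sigma=\pi$. Since it is obtained from the canonical lift $\tilde{f}$ through the natural real structure, it coincides with the lift $\tilde{f}_-$ whose existence is asserted just above the statement, so it remains only to compute it in coordinates.

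The computation is a direct substitution. Starting from a point $[1,w,\overline{v},w\overline{v}]\in\mathcal{Q}^-$ lying over the slice value $v=\alpha+i\beta\in\C_i^+$, applying $\sigma$ lands in $\mathcal{Q}^+$ at $[1,u,v,uv]$ with $u=-\overline{w}^{-1}$; feeding this into Formula~\eqref{sollevamento+} and applying $\sigma$ once more, the two chart normalisations (dividing by $-\overline{w}$, then scaling back by $w$) produce
\[
\tilde{f}_-[1,w,v,wv]=[1,w,\overline{\hat{g}(v)}-w\overline{h(v)},\overline{\hat{h}(v)}+w\overline{g(v)}],
\]
which is the claimed expression: the conjugation bars on $g,h,\hat{g},\hat{h}$ are precisely those introduced by the two applications of $\sigma$. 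As a sanity check one takes $f=\mathrm{id}$, where $g=\hat{g}=\mathrm{id}$ and $h=\hat{h}=0$, and verifies that $\tilde{f}_-$ reduces to the inclusion $\mathcal{Q}^-\hookrightarrow\mathbb{CP}^3$, i.e. the correct lift of the identity.

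The only genuine obstacle is bookkeeping: one must track the complex conjugations and the projective normalisations without slipping, and in particular keep the polynomials evaluated at the correct argument (this is where the Schwarz-reflection convention that fixes $g,\hat{g},h,\hat{h}$ on $\C_i\setminus\R$ enters). A fully self-contained alternative, avoiding $\sigma$, is to \emph{define} $\tilde{f}_-$ by the stated formula, note that it is manifestly holomorphic in $(w,v)$, and verify $\pi\circ\tilde{f}_-=f\circ\pi$ by hand: compute $q=\pi[1,w,\overline{v},w\overline{v}]=(1+wj)^{-1}\overline{v}(1+wj)=\alpha+I\beta$, expand $f(\alpha+I\beta)$ through the Representation Formula and the splitting~\eqref{standardsplitting}, and match the two sides using the explicit dependence $w=i\frac{B+iC}{1-A}$ of the fibre coordinate on $I=Ai+Bj+Ck$. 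This second route is the ``similar computation'' alluded to before the lemma, and its only difficulty is again the quaternionic algebra.
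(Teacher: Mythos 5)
Your proof is correct, and your primary route is genuinely different from the paper's. The paper proves the lemma by a single direct quaternionic computation: writing $f=\mathcal{I}(F_1+\imath F_2)$ and $q=Q_w^{-1}\bar v Q_w$ with $Q_w=1+wj$, it expands $[1,f(q)]=[Q_w,Q_wF_1(\bar v)+iQ_wF_2(\bar v)]=[1+wj,f(\bar v)+wjf(v)]$ and then inserts the splitting~\eqref{split2} --- i.e.\ exactly the ``self-contained alternative'' you sketch at the end, carried out via the stem function rather than via the explicit dependence of $w$ on $I=Ai+Bj+Ck$. Your main argument instead transports the known $\mathcal{Q}^+$ lift~\eqref{sollevamento+} across the quadric by conjugating with the fixed-point-free antiholomorphic involution $\sigma$, a map the paper only introduces in Section 5 (there called $j$) and never uses at this point. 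Your supporting claims all check out: $\pi\circ\sigma=\pi$ holds because $\sigma$ covers left multiplication by $j$ in homogeneous quaternionic coordinates, which is trivial on the left projective line; $\sigma$ preserves $\mathcal{Q}$ and sends $[1,u,v,uv]\in\mathcal{Q}^+$ to $[1,-\bar u^{-1},\bar v,-\bar u^{-1}\bar v]\in\mathcal{Q}^-$, matching $w=-\bar u^{-1}$; hence $\sigma\circ\tilde f\circ\sigma$ is holomorphic and covers $f$, and the coordinate chase is right: $\sigma[1,u,g(v)-u\hat h(v),h(v)+u\hat g(v)]=[-\bar u,1,-\overline{h(v)}-\bar u\,\overline{\hat g(v)},\overline{g(v)}-\bar u\,\overline{\hat h(v)}]$, which with $\bar u=-w^{-1}$ and rescaling by $w$ gives precisely $[1,w,\overline{\hat g(v)}-w\overline{h(v)},\overline{\hat h(v)}+w\overline{g(v)}]$. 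One caution: the sentence ``it coincides with the lift $\tilde f_-$'' should not be read as an appeal to uniqueness --- lifts of $f$ through $\pi$ are not unique in general (for constant $f$, any holomorphic map of $\mathcal{Q}^-$ into the fibre covers $f$) --- but this is harmless here, since the content of the lemma, as in the paper's own proof, is exactly the exhibition of a holomorphic map on $\mathcal{Q}^-\cap\pi^{-1}(\Omega)$ with the stated coordinates satisfying $\pi\circ\tilde f_-=f\circ\pi$, and your construction delivers that. As for what each route buys: the paper's computation is self-contained and needs nothing beyond the stem-function formalism, while yours is conceptually cleaner --- it explains the conjugation bars as two applications of the real structure, makes the relation $w=-\bar u^{-1}$ appear for a reason rather than by solving a quadratic, and anticipates both the role of the companion (the obstruction to $\sigma$-symmetry of a lift defined only on one side) and the map $j$ used later for the OCS analysis of the cubic. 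You were also right to flag the Schwarz-reflection bookkeeping: the point of $\mathcal{Q}^-$ over $q=\alpha+I\beta$ is $[1,w,\bar v,w\bar v]$ with $v=\alpha+i\beta\in\C^+$, so the displayed left-hand side $[1,w,v,wv]$ in the statement involves the same mild abuse of notation that the paper itself commits.
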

\begin{proof}
Let $F_1+\imath F_2$ be the stem function inducing $f$, and let $q=\alpha+I\beta\in\Omega$, $v=\alpha+i\beta\in\Omega\cap\C_i$ and $Q_w=1+wj$
be so that $\alpha+I\beta=Q_w^{-1}\bar v Q_w$.
In the same spirit of~\cite[Theorem 5.3]{gensalsto} and of~\cite[Theorem 24]{AAtwistor}, the thesis is a consequence of the following
sequence of equalities.
\begin{equation*}
  \begin{array}{rcl}
   [1,f(q)] & = & [1,f(Q_w^{-1}(\alpha-i\beta)Q_w)]\\
   & = & [1,f(\alpha-Q_w^{-1}(i)Q_w\beta)]\\
   & = & [1,F_1(\alpha-i\beta)+Q_w^{-1}(i)Q_wF_2(\alpha-i\beta)]\\
   & = & [Q_w,Q_wF_1(\alpha-i\beta)+iQ_wF_2(\alpha-i\beta)]\\
   & = & [1+wj,(1+wj)F_1(\alpha-i\beta)+i(1+wj)F_2(\alpha-i\beta)]\\
   & = & [1+wj,f(\alpha-i\beta)+wjf(\alpha+i\beta)]\\
   & = & [1+wj,f(\bar v)+wjf(v)]\\
   & = & [1+wj,\overline{\hat{g}(v)}+\overline{\hat{h}(v)}j+wj(g(v)+h(v)j)]\\
   & = & \pi[1,w,\overline{\hat{g}(v)}-w\overline{h(v)},\overline{\hat{h}(v)}+w\overline{g(v)}].
  \end{array}
 \end{equation*}
\end{proof}

Both Theorems 5.3 in~\cite{gensalsto} (or its extension~\cite[Theorem 24]{AAtwistor}), and Lemma~\ref{lemmaqminus} give a result that in the case of functions defined outside the real axis
is only {half-satisfactory}, in the following sense. Consider a slice regular function $f:\HH\setminus\R\to\HH$ that splits as in Formula~\eqref{split2}
such that $\overline{\hat{g}(\bar v)}\neq g(v)$ or $\overline{\hat{h}(\bar v)}\neq h( v)$. Then the commutation of the following diagrams given in~\cite{AAtwistor, gensalsto}
does not extends to $\mathcal{Q}$ (see~\cite[Section 6]{AAtwistor}).

$$
\begindc{\commdiag}[50]
\obj(-10,10)[aa]{$\mathcal{Q}^+$}
\obj(10,10)[bb]{$\tilde{f_+}(\mathcal{Q}^+)$}
\obj(-10,0)[cc]{$\HH\setminus\R$}
\obj(10,0)[dd]{$f(\HH\setminus\R)$}
\mor{aa}{bb}{$\tilde{f_+}$}
\mor{aa}{cc}{$\pi$}[-1,0]
\mor{bb}{dd}{$\pi$}
\mor{cc}{dd}{$f$}
\obj(30,10)[aaa]{$\mathcal{Q}^-$}
\obj(50,10)[bbb]{$\tilde{f_-}(\mathcal{Q}^-)$}
\obj(30,0)[ccc]{$\HH\setminus\R$}
\obj(50,0)[ddd]{$f(\HH\setminus\R)$}
\mor{aaa}{bbb}{$\tilde{f_-}$}
\mor{aaa}{ccc}{$\pi$}[-1,0]
\mor{bbb}{ddd}{$\pi$}
\mor{ccc}{ddd}{$f$}
\enddc
$$

On the contrary, if one considers a slice regular function defined on the whole $\HH$, there is no such an issue (see~\cite[Section 7]{gensalsto}).
The aim of what follows is to overcome this problem and to unify the theory.

\begin{prop}
Let $f:\HH\setminus\R\to\HH$ be a slice regular function that splits on $\C_i$ as in Formula~\eqref{split2}. Suppose that the lift $\tilde{f}_+:\mathcal{Q}^+\to\mathbb{CP}^3$
given by the parameterisation in~\eqref{sollevamento+} can be extended to $\mathcal{Q}$ as $\mathcal{F}:\mathcal{Q}\to\mathbb{CP}^3$ by letting varying the variable $v\in\mathbb{CP}^1$. 
Then $\mathcal{F}|_{\mathcal{Q}^-}=\tilde{f_{-}^\vee}$, where $f^\vee:\HH\setminus\R\to\HH$ is the companion of $f$.
\end{prop}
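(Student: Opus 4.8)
The plan is to prove the identity by writing down explicit parameterizations of both maps $\mathcal{F}|_{\mathcal{Q}^-}$ and $\widetilde{f^\vee}_-$ over $\mathcal{Q}^-$ and checking that they agree pointwise. First I would describe $\mathcal{F}|_{\mathcal{Q}^-}$. By hypothesis $\mathcal{F}$ is given on all of $\mathcal{Q}$ by the very same formula~\eqref{sollevamento+} that defines $\tilde{f}_+$ on $\mathcal{Q}^+$, now allowing the complex coordinate to range over $\mathbb{CP}^1$. Since a generic point of $\mathcal{Q}^-$ is $[1,w,\bar v,w\bar v]$ with $\bar v\in\C^-$ (equivalently $v\in\C^+$), and the first projective coordinate of this point is $w$, the extension reads
\[
\mathcal{F}([1,w,\bar v,w\bar v])=[1,w,\,g(\bar v)-w\hat{h}(\bar v),\,h(\bar v)+w\hat{g}(\bar v)],
\]
where $g,h,\hat g,\hat h$ are now evaluated at $\bar v\in\C^-$ through their holomorphic (analytic) continuation; this is precisely what ``letting $v$ vary in $\mathbb{CP}^1$'' forces, and it is legitimate because in the slice-polynomial situation these functions are polynomials.

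Next I would compute $\widetilde{f^\vee}_-$. By Remark~\ref{checkJ} the companion $f^\vee$ splits on $\C_i$ with the two cases of~\eqref{split2} interchanged, so its splitting data are obtained from those of $f$ by swapping $g\leftrightarrow\hat g$, $h\leftrightarrow\hat h$ and passing to the conjugate-coefficient polynomials, i.e.\ $g^\vee=\hat g^{\,c}$, $h^\vee=\hat h^{\,c}$, $\hat g^\vee=g^{c}$, $\hat h^\vee=h^{c}$. Feeding these data into Lemma~\ref{lemmaqminus}, evaluated at the same point $[1,w,\bar v,w\bar v]$ with $v\in\C^+$, produces an explicit parameterization of $\widetilde{f^\vee}_-$.

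The final step is the comparison. The formula of Lemma~\ref{lemmaqminus} carries complex conjugations on $g,h,\hat g,\hat h$ (it is written in the Schwarz-reflection normalization), whereas the extended map $\mathcal{F}$ carries none (it is written by holomorphic continuation). The key elementary identity is that for a polynomial $p$ one has $\overline{p^{c}(v)}=p(\bar v)$, where $p^{c}$ denotes the conjugate-coefficient polynomial. Applying this to each of the four data of $f^\vee$ turns $\overline{\hat g^\vee(v)}=\overline{g^{c}(v)}$ into $g(\bar v)$, $\overline{h^\vee(v)}=\overline{\hat h^{\,c}(v)}$ into $\hat h(\bar v)$, and similarly for the remaining two entries; hence Lemma~\ref{lemmaqminus} gives
\[
\widetilde{f^\vee}_-([1,w,\bar v,w\bar v])=[1,w,\,g(\bar v)-w\hat h(\bar v),\,h(\bar v)+w\hat g(\bar v)],
\]
which is identical to $\mathcal{F}([1,w,\bar v,w\bar v])$. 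In other words, the swap-and-conjugate that defines $f^\vee$ exactly cancels the conjugation built into the $\mathcal{Q}^-$-lift, and the two parameterizations become literally equal, component by component, proving $\mathcal{F}|_{\mathcal{Q}^-}=\widetilde{f^\vee}_-$.

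I expect the only real obstacle to be the bookkeeping of conventions: one must keep rigorously separate the Schwarz-reflection convention (in which $g(\bar v)=\overline{g(v)}$), used to write the lifts $\tilde{f}_\pm$, from the genuine holomorphic continuation used to extend $\mathcal{F}$ across the equator $\{v\in\R\}$, and one must track the relabeling of the $\mathcal{Q}^-$-coordinate $\bar v$ against the $\C^+$-variable $v$ appearing in Lemma~\ref{lemmaqminus}. Once these conventions are pinned down the computation is routine; a sanity check on a simple model such as $f=(qi)\ast\ell_+$ (for which $\mathcal{F}$ produces the analytic continuation $g(\bar v)=i\bar v$ on $\C^-$) confirms that it matches the companion's lift, sign included, rather than the Schwarz reflection, which is the subtle point that makes the companion—and not $f$ itself—appear on $\mathcal{Q}^-$.
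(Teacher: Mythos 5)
Your proof is correct and takes essentially the same approach as the paper: both arguments rest on Lemma~\ref{lemmaqminus} together with the expression of the companion's splitting data (Remark~\ref{checkJ}), and reduce to matching the four components of the two parameterizations over $\mathcal{Q}^-$, with the conjugation bookkeeping $\overline{p^{c}(v)}=p(\bar v)$ doing exactly the cancellation you describe. The only (inessential) difference is direction: the paper introduces an unknown function $\varphi$, imposes $\tilde{\varphi}_-=\mathcal{F}|_{\mathcal{Q}^-}$ and solves the resulting four equations for the splitting data to recognize $\varphi=f^\vee$, whereas you verify directly that the lift of $f^\vee$ reproduces the extended parameterization.
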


\begin{proof}
Let $f:\HH\setminus\R\to\HH$ be a slice regular function as in the hypothesis and define $\mathcal{F}:\mathcal{Q}\to\mathbb{CP}^3$ to be the 
extension of its lift on $\mathcal{Q}^+$ parameterised as
$$
\mathcal{F}[1,u,v,uv]=[1,u,g(v)-u\hat{h}(v),h(v)+u\hat{g}(v)].
$$
The possibility to extend $\tilde{f}_+$, in particular implies that $g,\hat{g}, h$ and $\hat{h}$ can be extended to $\mathbb{C}$.
Consider now a slice regular function $\varphi$ that splits on $\C_i$ as 
$$
\varphi:=\begin{cases}
l(v)+m(v)j & v\in\C_{i}^{+}\\
\overline{\hat{l}(\bar v)}+\overline{\hat{m}(\bar v)}j & v\in \C_{i}^{-}.
\end{cases}
$$
Then, according to Lemma~\ref{lemmaqminus}, the lift of $\varphi$ on $\mathcal{Q}_-$ is given by
$$
\tilde{\varphi}_-[1,u,v,uv]=[1,u,\overline{\hat{l}(v)}-u\overline{m(v)},\overline{\hat{m}(v)}+u\overline{l(v)}].
$$
For $\tilde{\varphi}_-$ to agree with $\mathcal{F}|_{\mathcal{Q}^-}$ we then have that, for any $v\in\C^-$
$$
\overline{\hat{l}(v)}=g(v),\quad \overline{m(v)}=\hat{h}(v),\quad \overline{\hat{m}(v)}=h(v),\quad \overline{l(v)}=\hat{g}(v),
$$
therefore, the splitting of $\varphi$ is given by
\begin{equation*}
\varphi:=\begin{cases}
\overline{\hat{g}(\bar v)}+\overline{\hat{h}(\bar v)}j & v\in\C_{i}^{+}\\
g(v)+h(v)j  & v\in \C_{i}^{-},
\end{cases}
\end{equation*}
that is $\varphi=f^\vee$, and we have proved the proposition.
\end{proof}

\subsection{Blow-up of the real line}

We now want to give a better description of the role of the real line in this framework.
Since $\HH\setminus\R\simeq \SF\times \C^{+}$, we consider the following embedding
\begin{eqnarray*}
\HH\setminus\R & \hookrightarrow & \SF\times \overline{\C^{+}},\\
\alpha+I\beta & \mapsto & (I,\alpha+i\beta).
\end{eqnarray*}
As recalled at the beginning of this section, the complex manifold $(\HH\setminus\R,\mathbb{J})$ is biholomorphic
to the two open subsets $\mathcal{Q}^{+}$ and $\mathcal{Q}^{-}$ endowed with the standard complex structure.
Computing the pre-images of the real line by means of the projection $\pi$, one has that $\mathcal{Q}_\R:=
\pi^{-1}(\R)\cap\mathcal{Q}\simeq \mathbb{CP}^1\times \mathbb{S}^1$.
Clearly $\overline{\mathcal{Q}^{+}}=\mathcal{Q}^+\cup\mathcal{Q}_\R$ and $\overline{\mathcal{Q}^{-}}=\mathcal{Q}^-\cup\mathcal{Q}_\R$.
As explained in~\cite[Section 7.3]{salamonviac}, $\mathcal{Q}_\R$ is a 3-real-dimensional submanifold
of $\mathcal{Q}$, which disconnects $\mathcal{Q}$ into two components, namely $\mathcal{Q}^+$ and $\mathcal{Q}^-$.
Taking care of these observations, we now extend the twistor projection $\pi|_{\mathcal{Q}}$ to $ \SF\times \overline{\C^{+}}$ as follows. 
If $\alpha\in\R$ and $[s,u]\in\mathbb{CP}^{1}$ corresponds to the element $I\in\SF$, then
$\pi[s,u,s\alpha,u\alpha]=(I,\alpha)=\alpha_I$, where we use the same symbol $\pi$ since there will not be any confusion.

\begin{equation*}
\begin{array}{rccl}
\pi:&\overline{\mathcal{Q}^{+}}&\rightarrow&\SF\times \overline{\C^{+}}\\
  &[s,u,sv,uv]&\mapsto&\begin{cases}
\alpha+I\beta & \mbox{ if } v\in \C^{+}\\
\alpha_{I} & \mbox{ if } v\in\R
\end{cases}
\end{array}
\end{equation*}
The map $\pi$ just defined, obviously extends the twistor projection and is bijective everywhere. For this reason
we may exploit it for our purposes.

Given a slice regular function defined away from the reals, such that the functions defining its splitting can be extended to $\R$,
we want to give an explicit parameterisation of its lift to $\pi^{-1}(\R)\cap\mathcal{Q}$, by means of the just defined extension of $\pi$ to $\SF\times \overline{\C^{+}}$.

\begin{lemma}\label{lemmaqr}
Let $f:\HH\setminus\R\to\HH$ be a slice regular function. Assume that $f$ splits on $\C_i$ as in Formula~\ref{split2} and
that $g,\hat{g}, h, \hat{h}$ can be extended to $\R$. Then defining $f_\R:\SF\times\R\to\HH$ as in Equation~\ref{pr},
there exists a differentiable function $\tilde{f_\R}:\mathcal{Q}_\R\to\mathbb{CP}^3$ such that $f_\R=\pi\circ\tilde{f_\R}\circ\pi^{-1}$.
Moreover, for any $\alpha\in\R$ and for any $I\in\SF$ (corresponding to $[1,u]\in\mathbb{CP}^1$), the parameterisation of $\tilde{f_\R}$ is given by
$$
\tilde{f}_\R[1,u,\alpha,u\alpha]=[1,u,g(\alpha)-u\hat{h}(\alpha),h(\alpha)+u\hat{g}(\alpha)].
$$
\end{lemma}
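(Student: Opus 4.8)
The plan is to obtain the statement as a boundary-value (limit) version of the two lifts $\tilde f_+$ on $\mathcal Q^+$ and $\tilde f_-$ on $\mathcal Q^-$ that we already control, using the fact that $\mathcal Q_\R$ is exactly the common boundary separating $\mathcal Q^+$ from $\mathcal Q^-$. Concretely, I would take the parameterisation of $\tilde f_+$ from Formula~\eqref{sollevamento+},
\[
\tilde f_+[1,u,v,uv]=[1,u,g(v)-u\hat h(v),h(v)+u\hat g(v)],
\]
which is defined for $v\in\C^+$, and let $v\to\alpha\in\R$. Since by hypothesis $g,\hat g,h,\hat h$ extend to $\R$, the right-hand side converges to $[1,u,g(\alpha)-u\hat h(\alpha),h(\alpha)+u\hat g(\alpha)]$, and I would take this as the \emph{definition} of $\tilde f_\R[1,u,\alpha,u\alpha]$. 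Differentiability of $\tilde f_\R$ on $\mathcal Q_\R$ is then immediate from the differentiability of the extended polynomials $g,\hat g,h,\hat h$ on $\R$.

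The substance of the lemma is the identity $f_\R=\pi\circ\tilde f_\R\circ\pi^{-1}$. Here I would unwind both sides. On $\mathcal Q_\R$ the extended projection $\pi$ sends $[1,u,\alpha,u\alpha]$ to $\alpha_I$, where $[1,u]\leftrightarrow I\in\SF$; thus $\pi^{-1}(\alpha_I)=[1,u,\alpha,u\alpha]$. Applying $\tilde f_\R$ gives $[1,u,g(\alpha)-u\hat h(\alpha),h(\alpha)+u\hat g(\alpha)]$, and I must check that applying $\pi$ to this point returns precisely $P_\R(\alpha_I)=F_1(\alpha)+IF_2(\alpha)$ as in Equation~\eqref{pr}. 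The cleanest route is to pass to the limit in the commuting diagram for $\tilde f_+$ on $\mathcal Q^+$: for $v=\alpha+i\beta$ with $\beta>0$ one has $\pi\circ\tilde f_+[1,u,v,uv]=[1,f(\alpha+I\beta)]$ by~\eqref{sollevamento+}, and letting $\beta\to0^+$ the left side tends to $\pi\circ\tilde f_\R[1,u,\alpha,u\alpha]$ (by continuity of $\pi$ on $\overline{\mathcal Q^+}$, i.e. the extension described just before the lemma) while the right side tends to $[1,f_\R(\alpha_I)]$ by the very definition~\eqref{pr} of $f_\R$. This yields the claimed equality.

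I expect the main obstacle to be a bookkeeping issue rather than a conceptual one: verifying that the extended $\pi$ on $\mathcal Q_\R$ really is continuous up to the boundary and that the fibre coordinates $u$ behave correctly under the limit $\beta\to0^+$. Recall that for $q=\alpha+I\beta$ the value of $X_1$ solving the quadric-plus-fibre system was $u=-i\frac{B+iC}{1+A}$ with $I=Ai+Bj+Ck$, and this expression is independent of $\beta$; so the $\mathbb{CP}^1$-coordinate $[1,u]$ labelling the fibre direction survives the limit unchanged and continues to encode $I$. I would make this explicit to justify that $\pi^{-1}(\alpha_I)=[1,u,\alpha,u\alpha]$ with the same $u$, and that no degeneration of the projective point occurs as $\beta\to0^+$ (the first coordinate stays normalised to $1$). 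Once this continuity of the fibre coordinate is pinned down, the identity $f_\R=\pi\circ\tilde f_\R\circ\pi^{-1}$ follows from the limiting argument above, completing the proof.
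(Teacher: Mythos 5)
Your proof is correct, but it follows a genuinely different route from the paper's. The paper proves the lemma by a self-contained computation at the boundary: writing $f=\mathcal{I}(F_1+\imath F_2)$ and $Q_u=1+uj$ with $\alpha+I\beta=Q_u^{-1}vQ_u$, it uses the definitional limit~\eqref{pr} once to get $[1,f_\R(\alpha_I)]=[1,F_1(\alpha)+Q_u^{-1}iQ_uF_2(\alpha)]$, then multiplies through by $Q_u$ on the left (using the left projective equivalence in $\HP$), expands $f(\alpha_i)=g(\alpha)+h(\alpha)j$ and $f(\alpha_{-i})=\overline{\hat g(\alpha)}+\overline{\hat h(\alpha)}j$, and reads off the result as $\pi[1,u,g(\alpha)-u\hat h(\alpha),h(\alpha)+u\hat g(\alpha)]$ --- exactly mirroring the proof of Lemma~\ref{lemmaqminus}, so that the lifts to $\mathcal{Q}^+$, $\mathcal{Q}^-$ and $\mathcal{Q}_\R$ are all treated by the same chain of equalities. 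You never redo that algebra: you define $\tilde f_\R$ as the boundary value of the parameterisation~\eqref{sollevamento+} and obtain $f_\R=\pi\circ\tilde f_\R\circ\pi^{-1}$ by letting $\beta\to0^+$ in the interior identity $[1,f(\alpha+I\beta)]=\pi\,\tilde f_+[1,u,v,uv]$. This is legitimate and more economical, and you correctly isolate the two points where a limiting argument could fail: the fibre coordinate $u$ depends only on $I$ and not on $\beta$, and the projective limit does not degenerate because the first homogeneous coordinate stays normalised to $1$. One small clarification: the continuity you need on the image side is that of the global twistor projection $\pi:\mathbb{CP}^3\to\HP$ (automatic, and the right object to invoke, since $\tilde f_+[1,u,v,uv]$ need not lie in $\overline{\mathcal{Q}^+}$); the extended boundary projection enters only through $\pi^{-1}(\alpha_I)=[1,u,\alpha,u\alpha]$, which holds by its very definition. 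In terms of trade-offs, the paper's direct computation verifies the formula with no limit interchange beyond the definitional one in~\eqref{pr} and keeps the treatment parallel across the three pieces of $\mathcal{Q}$, while your argument makes transparent that the lemma is precisely the statement that the commuting diagram for $\tilde f_+$ extends continuously to $\overline{\mathcal{Q}^+}$, at the cost of the continuity bookkeeping you duly supply.
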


\begin{proof}
As in the proof of Lemma~\ref{lemmaqminus}, let $F_1+\imath F_2$ be the stem function inducing $f$, and let $q=\alpha+I\beta\in\Omega$, $v=\alpha+i\beta\in\Omega\cap\C_i$ and $Q_u=1+uj$ be such that $\alpha+I\beta=Q_u^{-1} v Q_u$. 
Then, taking into account the definitions of $f_\R$ and of the extension of $\pi$ to $\overline{\mathcal{Q}^{+}}$ and the fact that $\pi$ is bijective, the thesis is a consequence of the following
sequence of equalities. 
\begin{equation*}
  \begin{array}{rcl}
   [1,f(\alpha_{I})] & = & [1,\lim_{\beta\to0^{+}}f(Q_u^{-1}(\alpha+i\beta)Q_u)]\\
   & = & [1,F_1(\alpha)+Q_u^{-1}(i)Q_uF_2(\alpha)]\\
   & = & [Q_u,Q_uF_1(\alpha)+iQ_uF_2(\alpha)]\\
   & = & [1+uj,(1+uj)F_1(\alpha)+i(1+uj)F_2(\alpha)]\\
   & = & [1+uj,F_{1}(\alpha)+iF_{2}(\alpha)+uj(F_{1}(\alpha)-iF_{2}(\alpha))]\\
   & = & [1+uj,f(\alpha_{i})+uj(f(\alpha_{-i}))]\\
   & = & [1+uj,g(\alpha)+h(\alpha)j+uj(\overline{\hat{g}(\alpha)}+\overline{\hat{h}(\alpha)}j)]\\
   & = & \pi[1,u,g(\alpha)-u\hat{h}(\alpha),h(\alpha)+u\hat{g}(\alpha)].
  \end{array}
 \end{equation*}
\end{proof}

\begin{remark}
What we have obtained in the previous lemma can be synthesised by saying that, given a slice regular function $f:\HH\setminus\R\to\HH$ 
that splits as in Formula~\ref{split2} and such that the functions $g,\hat{g},h$ and $\hat{h}$ can be extended to $\R$, then the following diagram 
commute

$$
\begindc{\commdiag}[50]
\obj(-10,10)[aa]{$\overline{\mathcal{Q}^+}$}
\obj(10,10)[bb]{$\tilde{f_+}(\overline{\mathcal{Q}^+})$}
\obj(-10,0)[cc]{$\SF\times \overline{\C^{+}}$}
\obj(10,0)[dd]{$f(\SF\times \overline{\C^{+}})$}
\mor{aa}{bb}{$\tilde{f_+}$}
\mor{aa}{cc}{$\pi$}[-1,0]
\mor{bb}{dd}{$\pi$}
\mor{cc}{dd}{$f$}
\obj(30,10)[aaa]{$[1,u,v,uv]$}
\obj(60,10)[bbb]{$[1,u,g(v)-u\hat{h}(v),h(v)+u\hat{g}(v)]$}
\obj(30,0)[ccc]{$\alpha+I\beta$}
\obj(60,0)[ddd]{$f(\alpha+I\beta)$}
\mor{aaa}{bbb}{$\tilde{f_+}$}
\mor{aaa}{ccc}{$\pi$}[-1,0]
\mor{bbb}{ddd}{$\pi$}
\mor{ccc}{ddd}{$f$}
\enddc
$$
Analogous considerations hold for $\tilde f_{-}$.

\end{remark}

%

\section{Pre-images of a slice-polynomial function}

Let $P$ be a slice-polynomial function which splits on $\C_i$ as in Formula~\eqref{standardsplitting}.
We denote by $\tilde{P}_+:\mathcal{Q}^+\to \mathbb{CP}^3$ the lift of $P$ on $\mathcal{Q}^+$, by $\tilde{P_{-}^\vee}:\mathcal{Q}^-\to \mathbb{CP}^3$ the lift of $P^\vee$ on 
$\mathcal{Q}^-$ and by $\tilde{P_{\R}}:\mathcal{Q}_\R\to \mathbb{CP}^3$ the lift of $P_{\R}$ on 
$\mathcal{Q}_\R$. 
\begin{defi}
We define the {\em extended twistor lift} of $P$ as the function	$\mathcal{P}:\mathcal{Q}\rightarrow \mathbb{CP}^3$ given by
\begin{equation*}
 \mathcal{P}(X):=\begin{cases}
                \tilde{P}_+(X), &\mbox{ if }X\in\mathcal{Q}^+,\\
                 \tilde{P_{\R}}(X), &\mbox{ if }X\in\mathcal{Q}_\R ,\\
                \tilde{P_{-}^\vee}(X), &\mbox{ if }X\in\mathcal{Q}^-.
                \end{cases}
\end{equation*}
\end{defi}

\noindent Thanks to~\cite[Theorem 5.3]{gensalsto} and its extensions (the already cited~\cite[Theorem 24]{AAtwistor}
and Lemmas~\ref{lemmaqminus} and~\ref{lemmaqr}), we are able to give a parameterisation of $\mathcal{P}$
by means of the splitting of $P$: 
\begin{equation*}
\mathcal{P}[1,u,v,uv]=[1,u,g(v)-u\hat{h}(v),h(v)+u\hat{g}(v)],
\end{equation*}
where $u,v\in\C_i$. 
By its definition $u$ parameterises all the imaginary units of
$\SF$ except for $-i$. For this reason, recalling the \textit{Segre embedding} $(\mathbb{CP}^1\times\mathbb{CP}^1)\simeq\mathcal{Q}$ given by $([s,u],[t,v])\mapsto[st,ut,sv,uv]$,
it is useful to consider the parameterisation of $\mathcal P$ homogenised 
with respect to the variable $u$
$$
\mathcal{P}[s,u,sv,uv]=[s,u,sg(v)-u\hat{h}(v),sh(v)+u\hat{g}(v)].
$$

At this stage, the idea is to study simultaneously the pre-images of a point $q\in \HH$ via $P$, $P^{\vee}$ and $P_{\R}$ by means of their extended twistor lift $\mathcal{P}$. This is possible having in mind the commutative properties of the following three diagrams
$$
\begindc{\commdiag}[50]
\obj(-10,10)[aa]{$\mathcal{Q}^+$}
\obj(10,10)[bb]{$\tilde{P_+}(\mathcal{Q}^+)$}
\obj(-10,0)[cc]{$\HH\setminus\R$}
\obj(10,0)[dd]{$P(\HH\setminus\R)$}
\mor{aa}{bb}{$\tilde{P_+}$}
\mor{aa}{cc}{$\pi$}[-1,0]
\mor{bb}{dd}{$\pi$}
\mor{cc}{dd}{$P$}
\enddc\quad
\begindc{\commdiag}[50]
\obj(-10,10)[aa]{$\mathcal{Q}^-$}
\obj(10,10)[bb]{$\tilde{P^{\vee}_-}(\mathcal{Q}^-)$}
\obj(-10,0)[cc]{$\HH\setminus\R$}
\obj(10,0)[dd]{$P^{\vee}(\HH\setminus\R)$}
\mor{aa}{bb}{$\tilde{P^{\vee}_-}$}
\mor{aa}{cc}{$\pi$}[-1,0]
\mor{bb}{dd}{$\pi$}
\mor{cc}{dd}{$P^{\vee}$}
\enddc\quad
\begindc{\commdiag}[50]
\obj(-10,10)[aa]{$\mathcal{Q}_\R$}
\obj(10,10)[bb]{$\tilde{P_{\R}}(\mathcal{Q}_\R)$}
\obj(-10,0)[cc]{$\SF\times \R$}
\obj(10,0)[dd]{$P_{\R}(\SF\times \R)$}
\mor{aa}{bb}{$\tilde{P_{\R}}$}
\mor{aa}{cc}{$\pi$}[-1,0]
\mor{bb}{dd}{$\pi$}
\mor{cc}{dd}{$P_{\R}$}
\enddc
$$
and the fact that $\mathcal{Q}_\R$ disconnects $\mathcal{Q}$ into $\mathcal{Q}^{+}$ and $\mathcal{Q}^{-}$.
Let us denote by $\mathcal{S}=\mathcal{P}(\mathcal{Q})\subset\mathbb{CP}^3$  the surface parameterised by the extended lift of $P$. If a point $q\in\HH$ lifted to $\mathcal{S}=\tilde{P_+}(\mathcal{Q}^+)\cup\tilde{P^{\vee}_-}(\mathcal{Q}^-)\cup\tilde{P_{\R}}(\mathcal{Q}_\R)$ has a pre-image via $\mathcal{P}$
in $\mathcal{Q}^{+}$, $\mathcal{Q}^{-}$ or $\mathcal{Q}_\R$, then it will have 
a pre-image in $\mathbb{S}\times\overline{\mathbb{C}^+}$ with respect to $P$, $P^{\vee}$ or $P_{\R}$,
respectively.

 Since the fibres of $\pi$ are parameterised by System~\eqref{eqfiber}, then, for any $q=q_{1}+q_{2}j\in\mathbb{H}$, 
we have that the intersection $\pi^{-1}(q)\cap\mathcal{S}$ is parameterised by 

\begin{equation*}
\begin{cases}
sg(v)-u\hat{h}(v)=sq_{1}-u\bar q_{2}\\
sh(v)+u\hat{g}(v)=sq_{2}+u\bar q_{1}
\end{cases}
\end{equation*}
%
and, therefore, we are left to find solutions $([s,u],v)\in \mathbb{CP}^1\times \C$ of the system
\begin{equation}\label{system}
\begin{cases}
s(g(v)-q_1)-u(\hat h (v)-\bar q_2)=0\\
s(h(v)-q_2)-u(\bar q_1-\hat g (v))=0.
\end{cases}
\end{equation}
Let us denote by $A_q(v)$ the two by two complex matrix associate to the previous system, 
\begin{equation*}
	A_q(v)=
{\begin{pmatrix}
	 g(v)-q_1 & -\hat h (v)+\bar q_2\\
	h(v)-q_2 & -\bar q_1+\hat g (v)
\end{pmatrix},
}\end{equation*}
 by 
$D_q(v)$ the determinant of $A_q(v)$,
\begin{equation}\label{determinantA}
D_q(v)=-(g(v)-q_1)(\bar q_1-\hat g (v))+(h(v)-q_2)(\hat h (v)-\bar q_2),
\end{equation}
and by $d=\max_{q\in\HH}\{\deg D_q(v)\}$ its degree. This complex polynomial depends on $q$ but its degree is generically constant and
equal to $d$.
The following theorem describes the set of solutions of System~\eqref{system} for a fixed $q$.

\begin{theorem}\label{systempre-images}
Let $P$ be a slice-polynomial function, not slice-constant, which splits on $\C_i^+$ as
\begin{equation*}
P(v):=\begin{cases}
g(v)+h(v)j & v\in\C_{i}^{+}\\
\overline{\hat{g}(\bar v)}+\overline{\hat{h}(\bar v)}j & v\in \C_{i}^{-},
\end{cases}
\end{equation*}
let $q=q_1+q_2j \in \HH$ be any quaternion and let $A_q(v)$, $D_q(v)$ and $d$ be defined as above. Then
\begin{itemize}
\item if there exists $v_{0}\in\mathbb{C}_{i}$ such that $A_q(v_{0})=0$, then $([s,u],v_0)$ is a solution of System \eqref{system} for any $[s,u]\in \mathbb{CP}^1$;

		\item if  $D_q(v)\equiv 0$ then, for any $v$ such that $A_q(v)\neq 0$,  there exists and is unique $[s(v),u(v)]\in \mathbb{CP}^1$ such that $([s(v),u(v)],v)$ is a solution of System \eqref{system};

			\item if  $D_q(v)\not \equiv 0$ and $v_1,\ldots, v_d$ are the roots of $D_q(v)$  repeated according to their multiplicity, then, if $A_q(v_{k})\neq 0$, there exists and is unique $[s_{k},u_{k}]\in \mathbb{CP}^1$ such that $([s_{k},u_{k}],v_k)$ is a solution of System \eqref{system}.
%
%
%
%
%
%
%
\end{itemize}


\end{theorem}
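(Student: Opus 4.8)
The plan is to recognize System~\eqref{system} for what it is: a \emph{homogeneous} linear system in the projective unknown $[s,u]\in\mathbb{CP}^1$, whose coefficient matrix is exactly $A_q(v)$. Indeed the two equations assert precisely that the column vector $(s,u)^{\top}$ lies in the kernel of $A_q(v)$. Hence, once $v\in\C_i$ is fixed, determining the admissible $[s,u]$ is nothing more than computing $\ker A_q(v)$, and the three bullets of the statement should correspond one-to-one to the three possible ranks of this $2\times 2$ complex matrix. So my whole strategy is to set up the rank trichotomy for $A_q(v)$ and then translate each case.

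First I would dispose of the rank-zero situation: if $A_q(v_0)=0$, both equations hold identically in $(s,u)$, so every $[s,u]\in\mathbb{CP}^1$ gives a solution $([s,u],v_0)$, which is the first bullet and needs no computation. For the other two cases I would invoke the elementary dichotomy for a $2\times 2$ matrix: a nontrivial $(s,u)$ exists if and only if $D_q(v)=\det A_q(v)=0$, where $D_q$ is the determinant already recorded in~\eqref{determinantA}; and when moreover $A_q(v)\neq 0$, the matrix has rank exactly one, so $\ker A_q(v)$ is a single complex line, i.e. a \emph{unique} point of $\mathbb{CP}^1$. Applying this in the two regimes finishes the argument: if $D_q\equiv 0$, then every $v$ with $A_q(v)\neq 0$ is in the rank-one case and yields a unique $[s(v),u(v)]$ (second bullet); while if $D_q\not\equiv 0$, any projective solution forces $D_q(v)=0$, so $v$ must be one of the roots $v_1,\dots,v_d$, and at each such $v_k$ with $A_q(v_k)\neq 0$ the same rank-one reasoning produces a unique $[s_k,u_k]$ (third bullet).

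Since the core is linear algebra, I do not expect a genuine obstacle; the work is bookkeeping. The points demanding a little care are: verifying by direct expansion that $\det A_q(v)$ is the polynomial written in~\eqref{determinantA}; using the hypothesis that $P$ is not slice-constant to guarantee that $g,h,\hat g,\hat h$ are not all constant, so that the degree $d=\max_{q\in\HH}\deg D_q(v)$ is a meaningful (and positive) invariant; and recalling that the third bullet is tacitly placed in the generic situation $\deg D_q(v)=d$, so that listing $v_1,\dots,v_d$ with multiplicity genuinely exhausts the zero set of $D_q$. With these observations in place, the theorem reduces entirely to the rank trichotomy for $A_q(v)$.
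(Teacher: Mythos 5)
Your proof is correct and takes essentially the same approach as the paper: the paper likewise notes that a solution $[s,u]\in\mathbb{CP}^1$ of System~\eqref{system} can exist only where $D_q(v)=\det A_q(v)$ vanishes, handles $A_q(v_0)=0$ directly, and in the rank-one cases (rows linearly dependent with, say, the first row nonzero) exhibits your unique kernel line explicitly as $[s,u]=[\hat h(v)-\bar q_2,\ g(v)-q_1]$. Your side remarks---that non-slice-constancy makes $A_q(v)$ nonconstant, and that the third bullet tacitly assumes the generic situation $\deg D_q(v)=d$---also match the paper's own framing.
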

\begin{proof}
Note that the fact that $P$ is not slice-constant implies that $A_q(v)$ is not a constant matrix.	
In order to get solutions $[s,u] \in \mathbb{CP}^1$ of System \eqref{system}, the determinant $D_q(v)$ must vanish.
If there exists $v_{0}$ such that $A_q(v_{0})=0$, then it is easy to see that for any $[s,u]\in \mathbb{CP}^1$ the pair $([s,u],v_0)$ is a solution of System \eqref{system} in $\mathcal{Q}$.

Suppose now that $D_q(v)$ vanishes identically. This is equivalent to say that the rows {$(A_q)_1(v)=( g(v)-q_1,  -\hat h (v)+\bar q_2)$ and $(A_q)_2(v)=(h(v)-q_2,  -\bar q_1+\hat g (v))$} of $A_q(v)$ are identically linearly dependent. 
{  Suppose then that, without loss of generality, $(A_q)_1(v)\not \equiv (0,0)$. Then the unique solution of System \eqref{system} in $\mathbb{CP}^1\times \C$ is given by $([s(v),u(v)],v)=([\hat h (v)-\bar q_2, g(v)-q_1],v)$.
}

If otherwise $D_q(v)$ is not identically zero, then it has exactly $d$ roots $v_1, \ldots, v_d$ repeated according to their multiplicity. 
If $A_{q}(v_{k})\neq 0$ (supposing without loss of generality that $(A_q)_1(v_k)\neq (0,0)$) the pair $([\hat h (v_k)-\bar q_2, g(v_k)-q_1],v_k)$ is   a solution of System \eqref{system} in $\mathcal{Q}$.
\end{proof}
The previous result can be interpreted in terms of pre-images of slice-polynomial functions. In fact, in the following corollary, we give a complete description
of the simultaneous pre-image of a point $q\in\HH$ via a slice-polynomial function, its companion and its extension to the reals.

\begin{cor}\label{mainresult}
Let $P$ be a slice-polynomial function, not slice-constant, which splits on $\C_i^+$ as in Formula \eqref{standardsplitting}, 
	let $q$ be any quaternion and let $A_{q}(v)$, $D_q(v)$ and $d$ be defined as above. Then

	\begin{itemize}
	\item if there exists $v_{0}\in\mathbb{C}_{i}$ such that $A_q(v_{0})=0$, then 
			if $v_{0}\notin \R$, then $P(\SF_{v_0})\equiv P^\vee(\SF_{v_0}) \equiv q$, 
			otherwise, if $v_{0}\in\R$, then $P_{\R}(v_{0})=q$ and $P, P^{\vee}$ can be extended
			continuously to $v_{0}$ as $P(v_{0})=P^{\vee}(v_{0})=q$;

		\item if  $D_q(v)\equiv 0$ then there exist two real surfaces $ \Sigma, \Sigma^\vee\subset\HH\setminus\R$, both biholomorphic to $\C^+$, such that $P(\Sigma)\equiv P^\vee(\Sigma^\vee) \equiv q$;

			\item if  $D_q(v)\not \equiv 0$, let $v_1,\ldots, v_d$ be the roots of $D_q(v)$  repeated according to their multiplicity. If $A_q(v_{k})\neq 0$ for all $k$,  then \[\# \{P^{-1}(q)\}+\# \{{P^\vee}^{-1}(q)\}+\# \{P_\R^{-1}(q)\}=d.\] 
		
		
	\end{itemize}

\end{cor}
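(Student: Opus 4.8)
The plan is to deduce Corollary~\ref{mainresult} directly from Theorem~\ref{systempre-images}, translating ``solutions of System~\eqref{system}'' into ``simultaneous pre-images of $q$'' by means of the extended twistor lift $\mathcal{P}$ and the three commutative squares displayed above. The backbone is a bijective dictionary: a pair $([s,u],v)$ solves System~\eqref{system} exactly when the point $[s,u,sv,uv]\in\mathcal{Q}$ lies in $\mathcal{S}\cap\pi^{-1}(q)$, and the stratification $\mathcal{Q}=\mathcal{Q}^+\sqcup\mathcal{Q}_\R\sqcup\mathcal{Q}^-$ is governed precisely by whether $v\in\C^+$, $v\in\R$, or $v\in\C^-$. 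Since $\pi$ restricts to a bijection from each of $\mathcal{Q}^+,\mathcal{Q}^-$ onto $\HH\setminus\R$ and from $\mathcal{Q}_\R$ onto $\SF\times\R$, and since on these strata $\mathcal{P}$ coincides with $\tilde{P}_+$, $\tilde{P^\vee_-}$, $\tilde{P_\R}$ respectively, each solution with $v\in\C^+$ (resp. $\C^-$, $\R$) corresponds to exactly one pre-image of $q$ under $P$ (resp. $P^\vee$, $P_\R$).

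With this dictionary in hand I would run through the three alternatives of Theorem~\ref{systempre-images}. When $A_q(v_0)=0$ the entire fibre $\{[s,u]\}\times\{v_0\}$ solves; this fibre is the $\mathcal{P}$-lift of the sphere $\SF_{v_0}$ if $v_0\notin\R$, and of a real point if $v_0\in\R$. I would note that $A_q(v_0)=0$ forces the degeneracy system~\eqref{systemf2} at $v_0$, so $\SF_{v_0}$ is a degenerate sphere on which $P$ is constant, equal to $g(v_0)+h(v_0)j=q$; Proposition~\ref{degeneratecoincide} then puts $\SF_{v_0}$ in the degenerate set of $P^\vee$ as well, and the companion splitting of Remark~\ref{checkJ} identifies the common value, with the continuous extension across $v_0\in\R$ read off from~\eqref{pr}. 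When $D_q\equiv0$, Theorem~\ref{systempre-images} yields a one-parameter family $v\mapsto([s(v),u(v)],v)$; its trace in $\mathcal{Q}^+$ projects to a surface $\Sigma$ biholomorphic to $\C^+$ with $P\equiv q$, and its trace in $\mathcal{Q}^-$ to the analogous wing $\Sigma^\vee$ for $P^\vee$. Finally, when $D_q\not\equiv0$ the $d$ roots $v_1,\dots,v_d$ give $d$ solutions; sorting them by the location of $v_k$ in $\C^+$, $\C^-$ or $\R$ and invoking the dictionary produces the equality $\#\{P^{-1}(q)\}+\#\{{P^\vee}^{-1}(q)\}+\#\{P_\R^{-1}(q)\}=d$.

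The routine but delicate points are all about making this dictionary watertight. The chart omits the single unit $-i$ (the point $[0,1]\in\mathbb{CP}^1$), so I would work throughout with the homogenised form $\mathcal{P}[s,u,sv,uv]$ and check that the excluded fibre contributes neither a spurious nor a missing solution; and I would verify that distinct roots never collapse to the same quaternionic pre-image, so that ``$=d$'' holds as plain cardinalities — this is exactly where $A_q(v_k)\neq0$ and the genericity (simple roots of $D_q$, none real) enter. The step I expect to be the real crux, however, is pinning down the \emph{value} in the two degenerate bullets: confirming that $P^\vee$, whose splitting in Remark~\ref{checkJ} is expressed through the reflected argument $\bar v$, genuinely takes the value $q$ on $\SF_{v_0}$ and on $\Sigma^\vee$, rather than merely being constant there. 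This requires tracking carefully how the vanishing $A_q(v_0)=0$ at a single point propagates through the companion's conjugate-reflected splitting, and it is the one place where I would want to check the bookkeeping against an explicit example before trusting the general argument.
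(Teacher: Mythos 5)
Your overall route is the paper's own: you set up the same dictionary between solutions $([s,u],v)$ of System~\eqref{system} and points of $\mathcal{P}(\mathcal{Q})\cap\pi^{-1}(q)$, stratified according to $v\in\C^+$, $v\in\R$, $v\in\C^-$, and you push the three alternatives of Theorem~\ref{systempre-images} through it. Your treatment of the second bullet (the curve $v\mapsto([s(v),u(v)],v)$ tracing a holomorphic surface, cut by $\mathcal{Q}^{\pm}$ and projected by the stratum-wise bijection $\pi$) and of the third bullet (sorting the roots $v_k$ by half-plane; uniqueness of $[s_k,u_k]$ plus injectivity of $\pi$ on each stratum preventing collisions, with multiple roots absorbed into genericity) coincide with the paper's proof, and your extra care about the chart missing $[0,1]$ is handled, as you suggest, by the homogenised parameterisation.

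However, the step you flagged as ``the real crux'' and deferred to a bookkeeping check is a genuine gap, and the fallback you name does not close it. When $v_0\in\C^+$, the \emph{entire} fibre $\mathbb{CP}^1\times\{v_0\}$ lies in $\mathcal{Q}^+$ (if $s\neq0$ then $X_2/X_0=v_0\in\C^+$, and if $s=0$ then $X_3/X_1=v_0$), so the twistor dictionary yields $P(\SF_{v_0})\equiv q$ and is silent about $P^\vee$. Your algebraic patch --- $A_q(v_0)=0$ forces \eqref{systemf2} at $v_0$, then Proposition~\ref{degeneratecoincide} and Remark~\ref{checkJ} pin down the value of $P^\vee$ --- breaks exactly where you feared: degeneracy of $\SF_{v_0}$ for $P^\vee$ is \eqref{systemf2} evaluated at $\bar v_0$, and \eqref{systemf2} at $v_0$ does not propagate to $\bar v_0$ unless $\hat g,\hat h$ are the Schwarz reflections of $g,h$, i.e.\ unless $P$ extends across $\R$; the proof of Proposition~\ref{degeneratecoincide} asserts that the system ``holds for $v$ and $\bar v$'', which is precisely the unjustified jump. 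The explicit test you proposed to run would have exposed this: take $P=(q-i)*\ell_+ + (q+i)*\ell_-$, so that $P(\alpha+I\beta)=\alpha+I(\beta-1)$, with splitting data $g(v)=\hat g(v)=v-i$, $h=\hat h\equiv 0$. For $q=0$ one gets $A_0(v)=(v-i)\,\mathrm{Id}$, vanishing at $v_0=i\in\C^+$, and indeed $P(\SF_i)\equiv 0$; but $P^\vee(\alpha+I\beta)=\alpha+I(\beta+1)$, so $P^\vee(\SF_i)=\{2I\,:\,I\in\SF\}$ and in fact ${P^\vee}^{-1}(0)=\emptyset$. What your dictionary genuinely proves in the first bullet is: $P(\SF_{v_0})\equiv q$ if $v_0\in\C^+$; $P^\vee(\SF_{v_0})\equiv q$ if $v_0\in\C^-$ (then the fibre sits in $\mathcal{Q}^-$); and $P_\R(v_0)=q$ if $v_0\in\R$ --- not the symmetric claim for $P$ and $P^\vee$ simultaneously. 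Since the paper's own proof of this bullet leans on Proposition~\ref{degeneratecoincide} in the same way, your instinct to distrust the step rather than copy it was exactly right; but a complete proof must replace that bullet by the asymmetric statement above, not merely verify the published one.
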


\begin{proof}
Suppose first that there exists $q\in\mathbb\HH$ and $v_{0}\in\mathbb{\C}$ such that $A_q(v_{0})= 0$. As shown
in Theorem~\ref{systempre-images}, for any $[s,u]\in\mathbb{CP}^{1}$, $([s,u],v_{0})$ is a solution 
of the System~\eqref{system}. If $v_{0}\in\C\setminus\R$, since $\pi:\mathcal{Q}^{+}\to\HH\setminus\R$
and $\pi:\mathcal{Q}^{-}\to\HH\setminus\R$ are both biholomorphisms, thanks to Proposition~\ref{degeneratecoincide}, we have that
$\mathbb{S}_{v_{0}}=P^{-1}(q)={P^{\vee}}^{-1}(q)$. If $v_{0}\in\R$, then, thanks to Remark~\ref{parterealedegenere} and to Lemma~\ref{lemmaqr}, $\mathbb{S}\times \{v_{0}\}=P_{\R}^{-1}(q)$,
that is $P_{\R}(v_{0})=q$.

If now $D_q(v)\equiv 0$, then, for any $v\in\C$ such that $A_q(v)\neq 0$, there exists $[s(v),u(v)]\in\mathbb{CP}^{1}$, such that $([s(v),u(v)],v)$ is a solution of System \eqref{system}. Suppose for simplicity that the first line $(A_{q})_{1}(v)$ is not identically zero,
then, $([\tilde{h}(v)-\bar q_{2},g(v)-q_{1}],v)\subset\mathbb{CP}^{1}\times\C$ defines a holomorphic 
surface $\mathcal{S}$. 
If for some $v_{0}$ $A_{q}(v_{0})=0$, then thanks Theorem~\ref{systempre-images}, for all $[s,u]\in\mathbb{CP}^{1}$, $([s,u],v_{0})$ is a solution of the System~\eqref{system}, and so, in particular
the surface $([\tilde{h}(v)-\bar q_{2},g(v)-q_{1}],v)\subset\mathbb{CP}^{1}\times\C$ can be extended
to $v_{0}$ holomorphically by taking $([\tilde{h}(v_{0})-\bar q_{2},g(v_{0})-q_{1}],v_{0})$.
Now, since, again $\pi$ is a biholomorphism if restricted to $\mathcal{Q}^{+}$
or to $\mathcal{Q}^{-}$, then $\Sigma:=\pi(\mathcal{S}\cap\mathcal{Q}^{+})\subseteq P^{-1}(q)$ and 
$\Sigma^{\vee}:=\pi(\mathcal{S}\cap\mathcal{Q}^{-})\subseteq {P^{\vee}}^{-1}(q)$ are two holomorphic surfaces. 

In the remaining case, let $v_{1},\dots, v_{d}$ be the roots of $D_q(v)$ repeated according to their multiplicity.
Suppose that $A_{q}(v_{k})\neq 0$ for all $k$, then by Theorem~\ref{systempre-images}, there exists and is unique $[s_{k},u_{k}]\in\mathbb{CP}^{1}$ such that $\pi(\mathcal{P}[s_{k},u_{k},s_{k}v_{k},u_{k}v_{k}])=q$. But then,
depending whether $v_{k}$ belongs to $\C^{+}, \C^{-}$ or to $\R$, it will provide a pre-image
for $P, P^{\vee}$ or $P_{\R}$, respectively and these three options are obviously mutually exclusive.
\end{proof}
Thanks to the previous corollary we are able to give the following definition.

\begin{defi}\label{twdeg}
Let $P$ be a slice-polynomial function. We define the \textit{twistor degree} of $P$ as $\ddeg(P)=d=\deg D_q$,
where $D_q$ is defined as in Formula~\eqref{determinantA}.
\end{defi}


The two surfaces $\Sigma$ and $\Sigma^\vee$ in the statement of Corollary~\ref{mainresult} glue together at
the real line, in the following sense: the surface $\mathcal S$ defined in the proof of the Corollary parameterises
a holomorphic surface in $\mathbb{CP}^1\times\C\subset\mathcal Q$. But $\pi:\mathcal Q \to \HH$ is a 
class $\mathcal{C}^\infty$ map whose restrictions to $\mathcal{Q}^+$ and $\mathcal{Q}^-$ have $\Sigma$ and
$\Sigma^\vee$ as images, respectively and these, of course, glue together at their boundary, given by $\pi(\mathcal{S}\cap\mathcal{Q}_\R)$.

\begin{remark}\label{doubledegree}
Given a quaternionic polynomial $Q$ of degree $n$ we have that $\overline{\hat{g}(\bar v)}= g(v)$ and $\overline{\hat{h}(\bar v)}= h( v)$, hence $\ddeg(Q)=2n$. Moreover, thanks to these symmetries, to 
any solution $v\in\C_{i}^{+}$ of $D_{q}(v)=0$ corresponds another solution in $\C_{i}^{-}$, coherently 
with the fact that $Q^{\vee}=Q$. In particular, for this reason, it was not possible to see all this peculiar
aspects of the theory in the context of slice regular functions defined over the reals as in the case
studied in~\cite{gensalsto}.
\end{remark}

\begin{remark}\label{systemsliceconstant}
	If $P$ is a slice-constant function, then $g,\hat g,h,\hat h$ are constant functions and then System \eqref{system} reduces to
	\begin{equation*}
	\begin{cases}
	sA-uB=0\\
	sC-uD=0.
	\end{cases}
	\end{equation*}
fore some constants $A,B,C,D \in \C_i$. Hence the system has admissible solutions if and only if the corresponding determinant $D_q(v)=-AD+BC= 0$. In this case for any $v \in \C_i$ the unique solution is given by $[s,u]=[B,-A] \in \mathbb{CP}^1$.
\end{remark}

\begin{example}\label{esempicompleti}
Let us exhibit some examples of analysis of the System in~\ref{system}.
\begin{itemize}
\item Consider the slice-constant function $\ell_+$. For this function the splitting is given by $g\equiv 1$
and $\hat{g}\equiv h\equiv \hat {h}\equiv 0$. Therefore, for any $v\in \C_i$, the matrix $A_q$ is given by
\begin{equation*}
	A_q(v)=
\begin{pmatrix}
	 1-q_1 & -\bar q_2\\
	-q_2 & \bar q_1
\end{pmatrix},
\end{equation*}
and $D_q(v)=\bar q_1-|q_1|^2-|q_2|^2$.  This is a constant function and is equal to zero only when $\bar q_1\in\R$
and $q_1=|q|^2$. If $q=\alpha+I\beta$, then, translating the last condition, the image of $\ell_+$ can be expressed
as follows
$$
\ell_+(\HH\setminus\R):=\{\alpha+I\beta\in\HH\,|\, I\perp i,\, \alpha=\alpha^2+\beta^2\},
$$
representing, for any $I\in\SF$ a circle of radius $1/2$ passing through $0$ and $1$ with center in $1/2$.
As expressed in Remark~\ref{systemsliceconstant}, for any $q\in\ell_+(\HH\setminus\R)$ there is an element
$[s,u]\in\mathbb{CP}^1$ which solves System~\eqref{system} for any $v\in\C$. The last implies that
any $q\in\ell_+(\HH\setminus\R)$ has a semi-slice of pre-images.
\item We pass now to the function $q\ell_+$. This function splits as $g(v)=v$
and $\hat{g}\equiv h\equiv \hat {h}\equiv 0$. The matrix $A_q$ is given by
\begin{equation*}
	A_q(v)=
\begin{pmatrix}
	 v-q_1 & -\bar q_2\\
	-q_2 & \bar q_1
\end{pmatrix},
\end{equation*}
and $D_q(v)=v\bar q_1-|q_1|^2-|q_2|^2$. This is identically zero for $q=0$ and for this value, the system in Formula~\eqref{system} as a solution for $s=0$, showing that $q\ell_+|_{\C_i^+}\equiv 0$. For any $q\neq 0$ there exists and is 
unique the solution $([s,u],v)$ of System~\eqref{system}.
Notice moreover that, for $q=0$ it holds $A_{0}(0)\equiv 0$, therefore $q\ell_{+}$ can be defined in $0$
and $0$ is degenerate for the function.
\item We study now the slice-polynomial function $(q+j)*\ell_+=\ell_+v+\ell_-j$.
The matrix $A_q(v)$ for this function is the following
\begin{equation*}
	A_q(v)=
\begin{pmatrix}
	 v-q_1 & 1-\bar q_2\\
	-q_2 & \bar q_1
\end{pmatrix},
\end{equation*}
and $D_q(v)=v\bar q_1+q_2-|q_1|^2-|q_2|^2$. This polynomial is again identically zero for $q=0$ but also for $q=j$.
For these two values for any $v\in\C$ there exists $[s,u]\in\mathbb{CP}^1$ that solve the system in~\eqref{system}.
In particular, for any $v$, the point $q=0$ is obtained for $[1,v]\in\mathbb{CP}^1$, while $q=j$, for $[0,1]$. 
For this function $A_{q}(v)\neq 0$ for any $q\in\HH$.
\end{itemize}
\end{example}

	\begin{cor}\label{union}
Let $P$ be a slice-polynomial function, not slice-constant. Then
\[P(\HH\setminus \R)\cup P^\vee(\HH\setminus \R)\cup P(\R)=\HH.\]
	\end{cor}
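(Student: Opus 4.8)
The plan is to read the statement directly off the simultaneous pre-image analysis of Corollary~\ref{mainresult}. Fix an arbitrary $q\in\HH$ and form $A_q(v)$, its determinant $D_q(v)$ and the twistor degree $d=\ddeg(P)$, as in Formula~\eqref{determinantA} and Definition~\ref{twdeg}. Since the set $P(\HH\setminus\R)\cup P^\vee(\HH\setminus\R)\cup P_\R(\R)$ is exactly the set of quaternions admitting a pre-image under at least one of $P$, $P^\vee$, $P_\R$, and since all three functions take values in $\HH$, the inclusion $(\cdots)\subseteq\HH$ is trivial and it suffices to prove that for \emph{every} $q$ the trichotomy of Corollary~\ref{mainresult} produces at least one such pre-image.

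First I would treat the two degenerate branches. If $A_q(v_0)=0$ for some $v_0\in\C_i$, the first bullet of Corollary~\ref{mainresult} already places $q$ in $P(\HH\setminus\R)\cup P^\vee(\HH\setminus\R)$ (when $v_0\notin\R$) or in $P_\R(\R)$ (when $v_0\in\R$). If instead $A_q$ never vanishes but $D_q\equiv 0$, the second bullet furnishes a non-empty surface $\Sigma\subset\HH\setminus\R$ with $P(\Sigma)\equiv q$, so $q\in P(\HH\setminus\R)$. In the remaining branch $A_q$ is non-vanishing and $D_q\not\equiv 0$: here any root $v_k$ of $D_q$ yields, through the third bullet, a genuine pre-image of $q$ lying in $P$, $P^\vee$ or $P_\R$ according to whether $v_k\in\C_i^+$, $\C_i^-$ or $\R$. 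Collecting the three branches gives $\HH\subseteq P(\HH\setminus\R)\cup P^\vee(\HH\setminus\R)\cup P_\R(\R)$, hence the asserted equality.

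The step I expect to be the main obstacle is guaranteeing that this last branch actually produces a root: a non-zero complex polynomial has a zero if and only if it is non-constant, so I must rule out that $D_q$ is a non-zero \emph{constant}. This is precisely where non-slice-constancy has to enter. By Theorem~\ref{systempre-images}, $P$ not slice-constant forces $A_q(v)$ to be non-constant and $d\ge 1$; the delicate point is that, while $\deg D_q=d$ holds \emph{generically}, the leading coefficient of $D_q$ depends on $q$ and can drop the degree for special $q$, a priori even to $0$ (contrast Remark~\ref{doubledegree}, where for honest quaternionic polynomials the built-in symmetry rigidly forces $\deg D_q=2n$). To make the argument robust I would recast it geometrically: the extended twistor lift $\mathcal{P}:\mathcal{Q}\to\mathbb{CP}^3$ has image contained in an algebraic surface $\overline{\mathcal S}=\overline{\mathcal{P}(\mathcal{Q})}$, which is a hypersurface of degree $\ge 1$ exactly because $P$ is not slice-constant. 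Every twistor fibre $\pi^{-1}(q)$ is a projective line and therefore meets this hypersurface, so $\pi(\overline{\mathcal S})=\HP^1\supseteq\HH$; transporting this back through the three commutative diagrams defining $\mathcal{P}$ identifies $\pi(\overline{\mathcal S})\cap\HH$ with $P(\HH\setminus\R)\cup P^\vee(\HH\setminus\R)\cup P_\R(\R)$. The one point that then genuinely needs care is to verify that no fibre meets $\overline{\mathcal S}$ only in the ``boundary'' points of $\overline{\mathcal S}\setminus\mathcal{P}(\mathcal{Q})$, i.e. that every intersection point is recovered by a bona fide value of $P$, of the companion $P^\vee$, or of the real extension $P_\R$; this is the reason all three objects, and not $P$ alone, must appear in the statement.
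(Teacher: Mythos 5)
Your reduction to Corollary~\ref{mainresult} is exactly the paper's route: the paper's entire proof is your first three branches plus the assertion, made without further argument, that since $P$ is not slice-constant at least one of $g,\hat g,h,\hat h$ is non-constant and ``therefore, for any $q\in\HH$, $D_q(v)=0$ has at least a root''. In other words, the paper resolves by fiat precisely the step you single out as the main obstacle. Your skepticism is well founded, and your proposal does not close the hole either: the hypersurface recast ends with the admission that one still has to check that no twistor fibre meets $\overline{\mathcal{P}(\mathcal{Q})}$ only in points outside the actual image of the extended lift, and you never verify this. So, as written, your argument is incomplete at exactly the point you flag.

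The serious news is that this step is not merely unverified but false, so neither your sketch nor the paper's proof can be completed for \emph{every} $q$. Take the paper's own Example~\ref{esempicompleti}, $P(q)=q*\ell_+$, for which $D_q(v)=v\bar q_1-|q_1|^2-|q_2|^2$. For any $q=q_2j$ with $q_2\neq 0$ (so $q_1=0$) this is the non-zero \emph{constant} $-|q_2|^2$: no roots, and System~\eqref{system} becomes $sv+u\bar q_2=0$, $-sq_2=0$, which forces $[s,u]=[0,0]$. One can confirm directly that $q=j$ has empty pre-image under all three maps: writing $I=Ai+Bj+Ck$, every non-zero value of $P=\ell_+v$ has $i$-component $\tfrac{(1+A)\beta}{2}>0$, every non-zero value of $P^\vee=\ell_-\bar v$ has $i$-component $-\tfrac{(1-A)\beta}{2}<0$, and $P_\R(\alpha_I)=\alpha\tfrac{1-Ii}{2}$ equals $j$ for no $(\alpha,I)$. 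Geometrically this is your flagged failure mode realised: here $\overline{\mathcal{P}(\mathcal{Q})}$ is the hyperplane $\{X_3=0\}$, the fibre over $j$ meets it only at $[0,1,-1,0]$, and that point is not a value of $\mathcal{P}$ --- it arises only from the indeterminacy point $([0,1],[0,1])$ of the rational extension $([s,u],[t,v])\mapsto[st,ut,sv,0]$. Thus the union misses the whole punctured plane $\{cj:\ c\in\C_i\setminus\{0\}\}$, and the corollary can only survive in a generic form (equality off a proper real Zariski closed subset of $\HH$, where $\deg D_q$ drops to a non-zero constant --- consistent with the ``generically'' in Theorem~\ref{mainresultintro}) or after taking closures. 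Credit to you for isolating the exact weak joint; the missing idea is that it needs a counterexample, not a repair.
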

\begin{proof}
It is a direct consequence of Theorem~\ref{systempre-images} and Corollary~\ref{mainresult}, keeping in mind that, if $P$ is not slice-constant, 
then at least one among $g, \hat{g}, h$ and $\hat{h}$ is not a constant polynomial. Therefore, for any $q\in\HH$
$D_q(v)=0$ has at least a root and the System~\eqref{system} can be solved.
\end{proof}

Since the lift $\mathcal{P}$ of a slice-polynomial function is parameterised by polynomial functions, then its image lies
on an algebraic surface $\mathcal{K}$. The twistor projection has $\mathbb{CP}^1$ as fibres and
therefore, they generically intersect $\mathcal{K}$ in $\deg\mathcal{K}$ points.
Hence we have the following corollary.

\begin{cor}\label{surf}
Let $P$ be a slice-polynomial function, which splits on $\C_i$ as in Formula~\eqref{standardsplitting}
and let $d=\ddeg(Q)$ be its twistor degree. 
Assume that the extended twistor lift $\mathcal{P}$ of $P$ is generically $k:1$ on its image, then
 the image of $\mathcal{P}$ lies in an algebraic surface of degree $d/k$.
\end{cor}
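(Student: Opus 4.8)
The plan is to run a double count of the points of $\mathcal{Q}$ that the extended lift $\mathcal{P}$ maps into a single generic twistor fibre, comparing an intersection-theoretic count on the target side with the root-count of Corollary~\ref{mainresult} on the source side. First I would fix notation: as already observed before the statement, $\mathcal{P}$ is parameterised by polynomial functions, so its image $\mathcal{S}=\mathcal{P}(\mathcal{Q})$ is Zariski dense in an irreducible algebraic surface $\mathcal{K}\subset\mathbb{CP}^3$ (irreducible because $\mathcal{Q}\simeq\mathbb{CP}^1\times\mathbb{CP}^1$ is). Write $m=\deg\mathcal{K}$; the goal is to show $m=d/k$, where $d=\ddeg(P)$.

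Next I would exploit the linearity of the twistor fibration. From System~\eqref{eqfiber} every fibre $\pi^{-1}(q)$ is cut out by equations linear in the homogeneous coordinates $X_0,\dots,X_3$, hence is a projective line in $\mathbb{CP}^3$. By B\'ezout's theorem a generic such line meets the degree-$m$ surface $\mathcal{K}$ transversally in exactly $m$ distinct points, and for $q$ outside a proper Zariski-closed set these $m$ points all lie in the dense image $\mathcal{S}$ and avoid every lower-dimensional exceptional locus; in particular $\pi^{-1}(q)$ is not contained in $\mathcal{K}$, a phenomenon confined to the twistor discriminant locus. The $k:1$ hypothesis then says each of these $m$ points has exactly $k$ preimages under $\mathcal{P}$, so that $\#\{X\in\mathcal{Q}:\mathcal{P}(X)\in\pi^{-1}(q)\}=k\,m$ for generic $q$.

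The other side of the count is Corollary~\ref{mainresult}. A point $X=[s,u,sv,uv]\in\mathcal{Q}$ satisfies $\mathcal{P}(X)\in\pi^{-1}(q)$ exactly when $([s,u],v)$ solves System~\eqref{system}, so the cardinality just computed is the number of solutions of that system. By Theorem~\ref{systempre-images}, for generic $q$ — so that $D_q\not\equiv0$ has $d$ simple roots $v_1,\dots,v_d$ with $A_q(v_k)\neq0$ — each root yields a unique $[s_k,u_k]$, giving exactly $d$ solutions (these are precisely the pre-images distributed among $P$, $P^\vee$ and $P_\R$, whose total is $d$). Intersecting the two genericity conditions, each being the complement of a proper real Zariski-closed subset and hence of dense intersection, I would equate the counts to obtain $k\,m=d$, i.e. $\deg\mathcal{K}=d/k$.

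The hard part will be the genericity bookkeeping of the middle step: one must produce a single nonempty open set of $q$ on which \emph{simultaneously} the line $\pi^{-1}(q)$ is transverse to $\mathcal{K}$ in $m$ points of $\mathcal{S}$, each covered exactly $k$ times, while $D_q$ has $d$ simple roots with $A_q$ nonvanishing. Controlling the loci where the fibre could meet $\mathcal{K}$ along its singular or non-parameterised part, escape to $v=\infty$, or lie entirely inside $\mathcal{K}$ is what requires care; once these are excluded, the remaining content is just the clean equality $k\deg\mathcal{K}=d$.
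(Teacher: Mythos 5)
Your proposal is correct and is essentially the paper's own argument: the paper proves the corollary by exactly this degree count, noting that a generic twistor fibre meets $\mathcal{K}$ in $\deg\mathcal{K}$ points and composing the $k:1$ map $\mathcal{P}$ with $\pi|_{\mathcal{K}}$ to match the generic root count $d$ from Theorem~\ref{systempre-images}, summarised in a commutative diagram with the remark that the thesis is then immediate. Your version merely spells out the B\'ezout and genericity bookkeeping that the paper leaves implicit.
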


\begin{proof}
The thesis is trivial and we just add the following synthesising graph.
$$
\begindc{\commdiag}[50]
\obj(-10,10)[aa]{$\mathcal{Q}$}
\obj(10,10)[bb]{$\mathcal{K}$}
\obj(10,0)[dd]{$\HH$}
\mor{aa}{bb}{$\mathcal{P}$}
\mor{aa}{bb}{$k:1$}[-1,0]
\mor{aa}{dd}{$d:1$}[-1,0]
\mor{bb}{dd}{$\pi$}
\mor{bb}{dd}{$\frac{d}{k}:1$}[-1,0]
\enddc\quad
$$
\end{proof}

\begin{remark}
The last corollary clarifies some issues of the twistor interpretation of slice regularity. First of all, thanks to 
Remark~\ref{doubledegree}, given any quaternionic polynomial $Q$ of degree $n$, one has that 
$d=\ddeg(Q)=2n$. Therefore, if the extended lift $\mathcal{Q}$ of $Q$ is generically $1:1$ and it has image on an algebraic surface $\mathcal{K}$, then $\deg\mathcal{K}=d=2n$. Hence, 
given an algebraic surface $\mathcal{K}\subset \mathbb{CP}^{3}$ of odd degree, it is not possible
to parameterise it by means of the twistor lift of a slice regular function in a generically injective way
(see e.g.~\cite[Sections 6 and 7]{gensalsto} where a quaternionic polynomial of degree 2 allows
to study a rational quartic scroll).
Moreover, the content of the present section completely solves the problem exposed
 in~\cite[Remark 17]{AAtwistor} where it was pointed out that it is not sufficient to consider the twistor lift of 
 a slice regular function to parameterise even simple algebraic surfaces. In particular, the \textit{ghost function} 
 proposed in~\cite[Remark 17]{AAtwistor} is what we call {\em companion}.
\end{remark}

\begin{remark}
Most of the techniques introduced in this section can be applied to a generic
slice regular function $f:\HH\setminus\R\to\HH$ for the study of its pre-images obtaining general results.
For instance the Open Mapping Theorem does not imply that the image of a slice regular function defined on
a domain without real points is an open set (see~\cite[Remark 5.2]{AAproperties}). However
 Corollary~\ref{union} shows that the union of the three images $P(\HH\setminus \R)\cup P^\vee(\HH\setminus \R)\cup P(\R)$ is an open set. {This property is likely to be generalised to the case of slice regular functions.
This study goes beyond the aims of the present work but it would be really interesting to explore it in the future.}
\end{remark}

\section{Discriminant locus of a cubic scroll}
The aim of this section is to describe the discriminant locus of a cubic scroll in $\mathbb{CP}^3$ by means of the introduced techniques
on slice-polynomial functions.
In~\cite{AAtwistor} it is proved that the family of surfaces that can be parameterised by the twistor lift of a slice regular function consists
only of scrolls (i.e.: surfaces ruled by lines). Let us study, as a meaningful example, the cubic scroll $\mathcal{C}\subset\mathbb{CP}^3$ defined by the set of points $[X_0,X_1,X_2,X_3]\in \mathbb{CP}^3$ satisfying the equation
\begin{equation}\label{scroll}
X_0X_3^2+X_1^2X_2=0.
\end{equation}
In the Cayley classification~\cite{AC1, AC2, AC3}, the cubic $\mathcal{C}$ is denoted by $S(1,1)$ (or by $S(1,1,3)$). 
Using the notation $m_{ij}:=\{[X_0,X_1,X_2,X_3]\,|\,X_i=0=X_j\}$, we have that $\mathcal{C}$ is a non-normal surface, singular along the line $m_{13}$
and it has two cuspidal points at $A=[1,0,0,0]$ and $B=[0,0,1,0]$. It has two directrix lines, namely $m_{13}$ and $m_{02}$.

In~\cite{AAtwistor} the first author constructs a slice regular function whose twistor lift parameterises $\mathcal{C}$. The mentioned function is one of the slice-polynomial functions introduced in Example~\ref{firstexample}:
$$
R(q)=-q^2\ell_+ +q\ell_-=\ell_+(-v^2)+\ell_-(\bar v).
$$
By the second expression of $R$ we immediately see that its splitting of $R$ with respect to $\C_i$ is given by the functions $g(v)=-v^2$, $\hat{g}(v)=v$ and $h=\hat{h}\equiv 0$.
Therefore its twistor degree is $\widetilde{\deg}(R)=3$ and the extended twistor lift $\mathcal{R}:\mathcal{Q}\to\mathbb{CP}^3$ of $R$ is given by
$$
\mathcal{R}[s,u,sv,uv]=[s,u,-sv^2,uv].
$$
Considering the homogenisation with respect to the variable $v$, the map $\mathcal{R}$ si given by:
$$
\mathcal{R}([s,u],[t,v])=\mathcal{R}[st,ut,sv,uv]=[st^2,ut^2,-sv^2,uvt].
$$
In the same spirit of~\cite[Theorem 7.1]{gensalsto} we state the following.
\begin{theorem}\label{birat}
The mapping $\mathcal{R}$ is a birational equivalence between $\mathcal{Q}$ and $\mathcal{C}$.
In fact, $\mathcal{R}$ maps $\mathcal{Q}$ onto $(\mathcal{C}\setminus m_{01})\cup\{B\}$ and is injective on the complement of
$m_{13}\setminus\{A\}$ where it has $2$ pre-images except for the point $B$, where $\mathcal{R}^{-1}(B)\simeq \mathbb{CP}^1$.
\end{theorem}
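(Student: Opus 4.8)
The plan is to establish birationality by exhibiting an explicit rational inverse, and then to read off both the image and the fibre structure from a direct case analysis of the homogeneous parameterisation
\[
\mathcal{R}([s,u],[t,v])=[st^2,ut^2,-sv^2,uvt].
\]
First I would check that the image lands on $\mathcal{C}$: substituting $X_0=st^2$, $X_1=ut^2$, $X_2=-sv^2$, $X_3=uvt$ into $X_0X_3^2+X_1^2X_2$ produces $su^2v^2t^4-su^2v^2t^4=0$. For the inverse I would use the assignment
\[
\Phi:[X_0,X_1,X_2,X_3]\longmapsto\bigl([X_0,X_1],[X_1,X_3]\bigr),
\]
a morphism on the dense open set $\{X_1\neq 0\}$ of $\mathcal{C}$. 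Composing gives $\mathcal{R}\circ\Phi=[X_0X_1^2,\,X_1^3,\,-X_0X_3^2,\,X_1^2X_3]$, and replacing $-X_0X_3^2$ by $X_1^2X_2$ through the defining equation of $\mathcal{C}$ rewrites this as $X_1^2\,[X_0,X_1,X_2,X_3]$. Hence $\mathcal{R}\circ\Phi=\mathrm{id}$ on $\mathcal{C}\cap\{X_1\neq 0\}$, so $\mathcal{R}$ and $\Phi$ are mutually inverse birational maps.

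Next I would pin down injectivity and the image off $\{X_1=0\}$. If $([s,u],[t,v])$ is sent to a point with $X_1=ut^2\neq 0$, then $u\neq 0$ and $t\neq 0$, and the ratios $X_0/X_1=s/u$ and $X_3/X_1=v/t$ recover $([s,u],[t,v])=([X_0,X_1],[X_1,X_3])$ uniquely; thus $\mathcal{R}$ restricts to a bijection from $\{u\neq 0,\ t\neq 0\}$ onto $\mathcal{C}\cap\{X_1\neq 0\}$, which together with the first paragraph makes it an isomorphism of these dense opens. On $\mathcal{C}$ the condition $X_1=0$ forces $X_0X_3^2=0$, so $\mathcal{C}\cap\{X_1=0\}=m_{01}\cup m_{13}$ with $m_{01}\cap m_{13}=\{B\}$; in particular $\mathcal{C}\cap\{X_1\neq 0\}=\mathcal{C}\setminus(m_{01}\cup m_{13})$, where $\mathcal{R}$ is already injective.

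It then remains to analyse the locus $\{X_1=0\}$ by hand. A preimage of a point with $X_1=0$ must satisfy $ut^2=0$ and $uvt=0$, which splits into two cases. If $t=0$ then $[t,v]=[0,1]$ and $\mathcal{R}([s,u],[0,1])=[0,0,-s,0]$, which equals $B$ as soon as $s\neq 0$; hence the set-theoretic fibre over $B$ is $\{([s,u],[0,1]):s\neq 0\}$, whose closure in $\mathcal{Q}$ is $\mathbb{CP}^1\times\{[0,1]\}\simeq\mathbb{CP}^1$, the one missing point $([0,1],[0,1])$ being the base point of $\mathcal{R}$. If instead $u=0$ then $[s,u]=[1,0]$ and the image is $[t^2,0,-v^2,0]$, sweeping out all of $m_{13}$; solving $[t^2,-v^2]=[X_0,X_2]$, that is $(t/v)^2=-X_0/X_2$, gives exactly two points $[t,v]$ for a general point of $m_{13}$ (those with $X_0,X_2\neq 0$), the single point $([1,0],[1,0])$ over $A=[1,0,0,0]$, and reproduces $B$ when $[X_0,X_2]=[0,1]$. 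Thus $m_{13}\subset\mathcal{R}(\mathcal{Q})$, whereas a point of $m_{01}$ can be reached only through the case $t=0$, which yields nothing but $B$; therefore $m_{01}\cap\mathcal{R}(\mathcal{Q})=\{B\}$.

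Assembling the three cases yields precisely the statement: $\mathcal{R}(\mathcal{Q})=(\mathcal{C}\setminus m_{01})\cup\{B\}$; the map is injective over $\mathcal{C}\setminus(m_{13}\setminus\{A\})$ (one preimage over $\mathcal{C}\setminus(m_{01}\cup m_{13})$ and over $A$, none over $m_{01}\setminus\{B\}$); it is two-to-one over $m_{13}\setminus\{A,B\}$; and $\mathcal{R}^{-1}(B)\simeq\mathbb{CP}^1$. The hard part will be the bookkeeping at the base point $([0,1],[0,1])$: set-theoretically the fibre over $B$ is only an affine line, and one must pass to its closure --- equivalently, resolve this single indeterminacy point --- to obtain the $\mathbb{CP}^1$ of the statement. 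Analogous care is needed at the two branch points $A$ and $B$ of the double cover of $m_{13}$, where the two sheets come together.
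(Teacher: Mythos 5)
Your proof is correct and establishes exactly the claims of the theorem, but it organises the argument differently from the paper, and more economically. The paper inverts $\mathcal{R}$ only on the locus where all four coordinates are nonzero (via $s/u=X_0/X_1$ and $v/t=X_1X_2/(X_0X_3)$) and must then examine the four lines $m_{01},m_{02},m_{13},m_{23}$ and the four points $A$, $B$, $C=[0,0,0,1]$, $D=[0,1,0,0]$ one by one; your single chart $\{X_1\neq 0\}$, with $\Phi=([X_0,X_1],[X_1,X_3])$ and the verification $\mathcal{R}\circ\Phi=X_1^2\cdot\mathrm{id}$ via the defining cubic, absorbs $m_{02}\setminus\{C\}$, $m_{23}\setminus\{A\}$ and $D$ into the generic case, so that only $\mathcal{C}\cap\{X_1=0\}=m_{01}\cup m_{13}$ requires hand analysis. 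Your route also repairs two small inaccuracies in the paper's own proof: over $m_{13}$ the paper writes $(v/t)^2=X_2/X_0$, dropping the sign coming from $X_2=-sv^2$ (your $(t/v)^2=-X_0/X_2$ is the correct relation; the count of two preimages is unaffected), and the paper deduces $\mathcal{R}^{-1}(B)\simeq \mathbb{CP}^1$ from the observation that $t=0$ imposes ``no condition on $[s,u]$'', silently including the point $([0,1],[0,1])$, which is precisely the unique base point of $\mathcal{R}$ that you isolate. As you note, the set-theoretic fibre over $B$ is the affine line $\{([s,u],[0,1]):s\neq 0\}$, and the $\mathbb{CP}^1$ of the statement is its closure --- equivalently the fibre of the morphism obtained by blowing up that one indeterminacy point, whose exceptional curve maps bijectively onto $m_{01}$, which also explains geometrically why $m_{01}\setminus\{B\}$ is absent from the image. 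What each version buys: the paper's chart-by-chart discussion records explicit parameterisations of all four coordinate lines, which are reused later in the discriminant analysis (cf.\ Remark~\ref{lines}), whereas your decomposition minimises the casework and makes both the birationality and the indeterminacy structure of $\mathcal{R}$ transparent.
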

\begin{proof}
First of all, if $X_0\neq 0$, $X_1\neq 0$, $X_2\neq 0$ and $X_3\neq 0$, we can define
$$
\frac{s}{u}=\frac{X_0}{X_1},\qquad\mbox{and}\qquad\frac{v}{t}=\frac{X_1X_2}{X_0X_3}
$$
and $[X_0,X_1,X_2,X_3]$ has a unique inverse image. The other cases are the following:
\begin{itemize}
\item if $X_0=0$, then $X_1^2X_2=0$, obtaining the lines $m_{01}$ and $m_{02}$.
\item If $X_1=0$, then $X_0X_3^2=0$, obtaining the lines $m_{01}$ and $m_{13}$.
\item If $X_2=0$, then $X_0X_3^2=0$, obtaining the lines $m_{02}$ and $m_{23}$.
\item If $X_3=0$, then $X_1^2X_2=0$, obtaining the lines $m_{13}$ and $m_{23}$.
\end{itemize}
The four lines $m_{01}, m_{02}, m_{13}, m_{23}$ intersect at $A,B, C=[0,0,0,1]$ and $D=[0,1,0,0]$ (see Figure~\ref{fourlines}).
The line $m_{02}\setminus\{C,D\}$ can be obtained for $s=0$ and, in this case the map $\mathcal{R}$
can be inverted by setting 
$$
\frac{v}{t}=\frac{X_1}{X_3}.
$$
The point $D$ is obtained for $s=0=v$ and so it has just a pre-image, while the point $C$ has no pre-images
since $X_0=X_1=0$ forces $t=0$.
The line $m_{13}\setminus\{A,B\}$ can be obtained for $u=0$ and the parameterisation gives two pre-images by 
$$
\left(\frac{v}{t}\right)^2=\frac{X_2}{X_0}.
$$ 
The point $A$ is obtained for $u=0=v$ and so it has only a pre-image, while the point $B$ can
be obtained for $t=0$ and since there are no condition on $[s,u]$, then $\mathcal{R}^{-1}(B)\simeq \mathbb{CP}^1$.
The line $m_{23}\setminus\{A,D\}$ can be obtained for $v=0$ and each point is given by
$$
\frac{u}{s}=\frac{X_1}{X_0}.
$$
Lastly, the line $m_{01}$ would be obtained for $t=0$, but this forces $X_3=0$. Therefore $m_{01}$ is not
in the image of $\mathcal{R}$ except for the point $B$.
\begin{figure}[ht]
\includegraphics[width=0.50\linewidth, height=5cm]{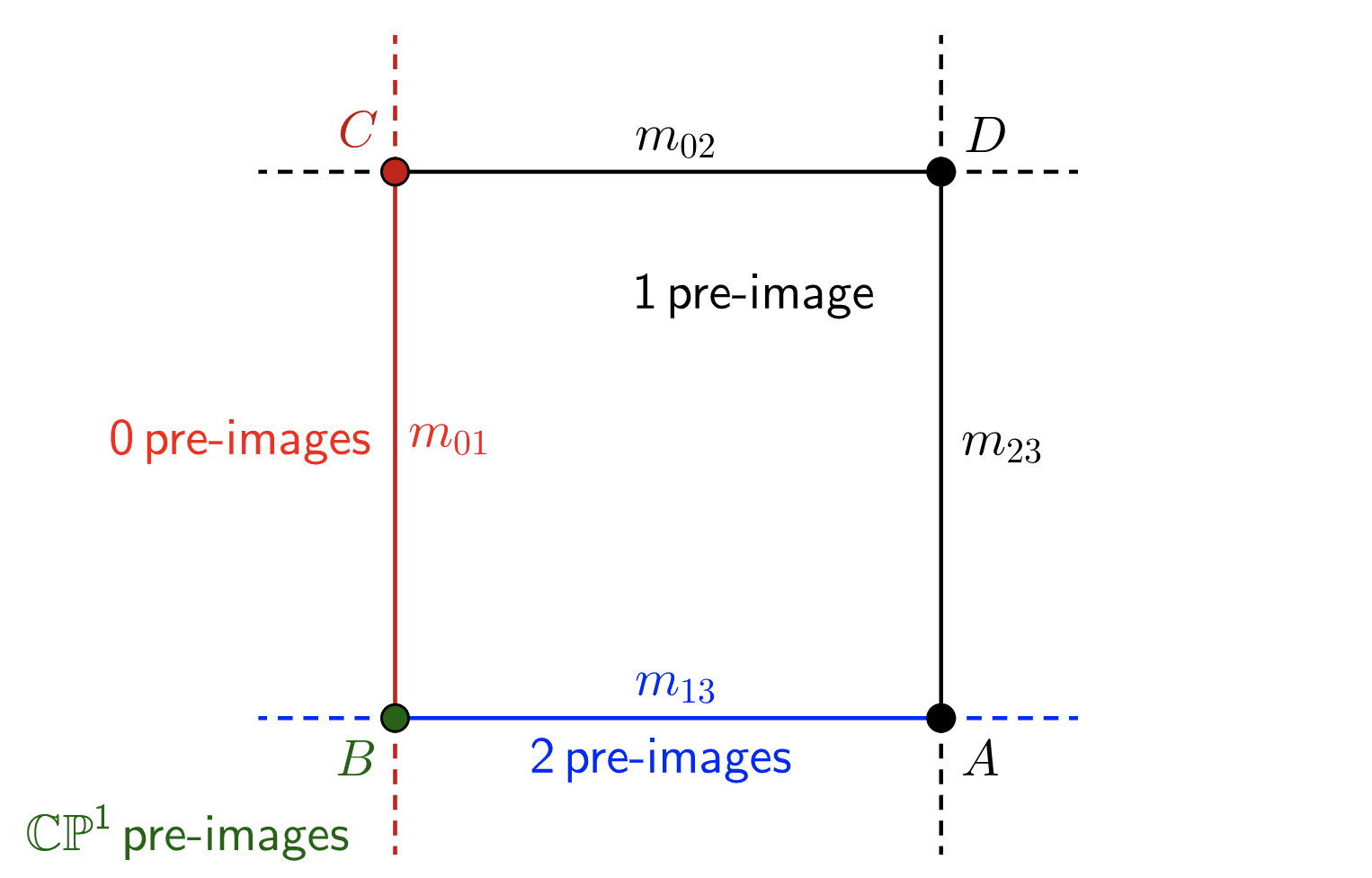}
\caption{In black the set of points with one pre-image, in blue the set of points with 2 pre-images, in red the set of points with no pre-image and in green the point $B$ with a $\mathbb{CP}^1$ of pre-images}
\label{fourlines}
\end{figure}\\
\end{proof}

{Given an algebraic surface  $\mathcal{K}$ in $\mathbb{CP}^3$ of degree $d$, its intersection with a twistor fiber consists in exactely $d$ points except at points belonging to the so called discriminant locus, where it can be either of lesser cardinality or it can be an entire fiber $\mathbb{CP}^1$. More precisely, we recall the following definition.
}

\begin{defi}
Let $\mathcal{K}$ be a surface of degree $d$ in $\mathbb{CP}^3$.
We define the \textit{discriminant locus} of $\mathcal{K}$ as the following set
$$
Disc(\mathcal{K}):=\{q\in\SF^4\simeq\mathbb{HP}^1\,|\, \#(\pi^{-1}(q)\cap\mathcal{K})\neq d\}.
$$
We denote by $G_0$ the subset of $Disc(\mathcal{K})$ such that for any $q\in G_0$, the twistor fibre $\pi^{-1}(q)\subset\mathcal{K}$
and, for any $n<d$ we define the following sets
$$
G_n:=\{q\in\SF^4\simeq\mathbb{HP}^1\,|\, \#(\pi^{-1}(q)\cap\mathcal{K})=n\}.
$$
Hence $Disc(\mathcal{K})=G_0\cup G_1\cup \dots\cup G_{d-1}$.
In the particular case in which $d=3$, we have $Disc(\mathcal{K})=G_0\cup G_1\cup G_2$ and the set $G_1$ is called the set of \textit{triple points} and $G_{2}$ the set of \textit{double points}.
\end{defi}

Now, in view of the interest related to the discriminante locus of an algebraic surface in the twistor space (see~\cite{AAtwistor, armstrong, APS, AS, gensalsto, salamonviac}),
we are going to describe it for the cubic scroll $\mathcal{C}$. To do that we will exploit the new techniques on slice regularity concerning
slice-polynomial functions and their pre-images. Thanks to Theorem~\ref{birat}, we know that the parameterisation $\mathcal{R}$ is  $1:1$ outside
 a Zarisky-closed subset of $\mathcal{C}$, namely $m_{01}\cup m_{13}$. Therefore, outside this set, we can describe the discriminant locus of $\mathcal{C}$ by means of Theorem~\ref{systempre-images}. 
The set $m_{01}\cup m_{13}$ will be studied by its own in the following remark.

\begin{remark}\label{lines}
{Recalling the definition of $\pi$, it is immediate to see that the line $m_{01}$ projects to $\infty$, which entails that $m_{01}$ is a twistor fiber and that $\infty$ belongs to $G_{0}$. 
} 

Consider now the intersection between $m_{13}$ and the twistor fibers in~\eqref{eqfiber}. 
We have that $X_{2}=X_{0}q_{1}$ and $0=X_{0}q_{2}$ implying $q_{2}=0$.
Hence $m_{13}$ projects on $\hat{\C}_{i}$. 
Since $m_{13}\subset\pi^{-1}(\hat{\C}_{i})\cap \mathcal{C}$ and $\mathcal{R}^{-1}$ is not defined on $m_{13}$
 we study here separately this set of fibers.

Consider the intersection between the twistor fibre \eqref{eqfiber} above $[1,q_1+q_2j]\in\HH\mathbb{P}$ and the cubic $\mathcal{C}$, 
\begin{equation}\label{intersection}
X_{0}^{3}q_{2}^{2}+2X_{0}^{2}X_{1}\bar q_{1}q_{2}+X_{0}X_{1}^{2}(\bar q_{1}^{2}+q_{1})-X_{1}^{3}\bar q_{2}=0.
\end{equation}
For any $q\in\HH$ such that $q_{2}=0$, Equation~\eqref{intersection}
becomes $X_{0}X_{1}^{2}(\bar q_{1}^{2}+q_{1})=0$. 
The set of points $q_{1}$ such that $\bar q_{1}^{2}+q_{1}=0$ clearly consists of  elements in $G_{0}$, $q_{1}\in\{0,-1,1/2+\sqrt{3}/2,1/2-\sqrt{3}/2\}\subset G_{0}$.
Therefore, for any $q_{1}$ such that $(\bar q_{1}^{2}+q_{1})\neq 0$, there are two solutions, namely $X_{0}=0$ and $X_{1}=0$ with multiplicity two and so $(\hat{\C}_{i}\setminus G_{0})\subset G_{2}$.
\end{remark}

Having analysed the two lines $m_{01}$ and $m_{13}$, we now pass to look at the discriminant
locus of $\mathcal{C}$ by means of the parameterisation $\mathcal{R}$.

First we recall some fundamental notion on cubic polynomials. Given any complex polynomial of degree $3$,
$p(x)=ax^{3}+bx^{2}+cx+d$, its discriminant is given by the following quantity
$$
\Delta_{p}=18abcd - 4b^3d + b^2c^2 - 4ac^3 - 27a^2d^2.
$$
The polynomial $p(x)$ has a multiple root if and only if $\Delta_{p}=0$. Moreover, if we denote by $\Delta_{p}'$ the quantity
$$
\Delta_{p}'=b^2 - 3ac,
$$
we have that $p$ has a triple root if and only if $\Delta_{p}=\Delta_{p}'=0$.

Let us begin our analysis of $Disc(\mathcal C)$ by the set of triple points.

\begin{prop}\label{triple}
Let $\mathcal{C}\subset \mathbb{CP}^3$ be the cubic of equation~\eqref{scroll}. Then the set of triple points $G_{1}$ of $\mathcal{C}$
is empty.
\end{prop}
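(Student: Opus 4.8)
The plan is to compute $\#(\pi^{-1}(q)\cap\mathcal{C})$ directly from the intersection equation~\eqref{intersection} and to check that it can never equal $1$. Restricting the defining cubic of $\mathcal{C}$ to a twistor fibre produces the binary cubic form~\eqref{intersection} in $[X_0,X_1]\in\mathbb{CP}^1$; since a binary cubic has exactly one distinct root only when that root has multiplicity three, a quaternion $q=q_1+q_2j$ belongs to $G_1$ precisely when~\eqref{intersection} has a single triple root (and is not identically zero, which would put $q$ in $G_0$). I would therefore split the discussion according to whether $q_2$ vanishes.

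If $q_2=0$, that is $q\in\hat{\C}_i$, I invoke Remark~\ref{lines}: there~\eqref{intersection} degenerates to $X_0X_1^2(\bar q_1^2+q_1)=0$, whose zero set is either the whole fibre (when $\bar q_1^2+q_1=0$, so $q\in G_0$) or the two \emph{distinct} points $[1,0]$ and $[0,1]$ (so $q\in G_2$). In neither situation is the intersection a single triple point.

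For $q_2\neq 0$ the fibre point at infinity $X_0=0$ does not lie on $\mathcal{C}$ (that would force $q_2=0$), so every intersection point has $X_0\neq 0$ and~\eqref{intersection} becomes the genuine cubic
\[
p(x)=-\bar q_2\,x^3+(\bar q_1^2+q_1)\,x^2+2\bar q_1 q_2\,x+q_2^2,\qquad x=\tfrac{X_1}{X_0},
\]
with nonvanishing leading coefficient. Requiring a triple root (equivalently $\Delta_p=\Delta_p'=0$) is the same as the factorisation $p(x)=-\bar q_2(x-x_0)^3$, and matching the four coefficients gives
\[
3\bar q_2 x_0=\bar q_1^2+q_1,\qquad -3\bar q_2 x_0^2=2\bar q_1 q_2,\qquad \bar q_2 x_0^3=q_2^2 .
\]
The decisive step is to eliminate $x_0$ from this overdetermined system while using the conjugation relations $q_1\bar q_1=|q_1|^2$ and $q_2\bar q_2=|q_2|^2\ge 0$. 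Dividing the last two equations yields $x_0=-3q_2/(2\bar q_1)$; comparing with $x_0=(\bar q_1^2+q_1)/(3\bar q_2)$ from the first gives $2\bar q_1(\bar q_1^2+q_1)=-9|q_2|^2$, while substituting the first into the second gives $(\bar q_1^2+q_1)^2=-6\bar q_1|q_2|^2$. Combining these two forces first $\bar q_1^2=3q_1$; feeding this back (so $\bar q_1^2+q_1=4q_1$) collapses the relation $2\bar q_1(\bar q_1^2+q_1)=-9|q_2|^2$ into $8|q_1|^2=-9|q_2|^2$. As the left-hand side is non-negative and the right-hand side non-positive, both must vanish, i.e.\ $q_1=q_2=0$; but this contradicts $q_2\neq 0$ (and $q=0$ already lies in $G_0$ by Remark~\ref{lines}). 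The degenerate sub-cases $\bar q_1=0$ and $\bar q_1^2+q_1=0$ are immediate, since each forces $x_0=0$ and hence $q_2=0$ through the third equation.

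I expect the only real work to be this elimination: the reduction to ``$p$ has a triple root'' is automatic, but the computation must be organised so that the quaternionic reality constraints $|q_1|^2,|q_2|^2\ge 0$ enter. Indeed, over arbitrary complex coefficients the three coefficient equations would be solvable; it is precisely the facts that $\bar q_1,\bar q_2$ are the conjugates of $q_1,q_2$ and that $|q_2|^2\ge 0$ that turn the eliminated relation into the sign-definite identity $8|q_1|^2=-9|q_2|^2$ and thereby yield the contradiction. Hence no quaternion produces a triple root, and $G_1=\emptyset$.
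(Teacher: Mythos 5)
Your proof is correct, but it follows a genuinely different route from the paper's. The paper counts the intersection $\pi^{-1}(q)\cap\mathcal{C}$ through the slice-polynomial machinery: it uses the birational parameterisation $\mathcal{R}$ (Theorem~\ref{birat}) and the determinant cubic $D_q(v)=v^3-v^2\bar q_1+vq_1-|q_1|^2-|q_2|^2$ in the base variable $v$, then applies the classical triple-root criterion $\Delta_{D_q}=\Delta'_{D_q}=0$, decomposes into real and imaginary parts with $q_1=x+iy$, $z=|q_2|$, solves $\Delta'_{D_q}=0$ explicitly (the hyperbola-meets-two-lines picture, giving the four candidates $q_1\in\{0,3,-3/2\pm 3\sqrt{3}/2\,i\}$), and checks case by case that $\Delta_{D_q}=0$ then reduces to quartics in $z$ with no real roots. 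You instead bypass the parameterisation entirely: you restrict the surface equation directly to the fibre, getting the cubic $p(x)$ in the fibre coordinate $x=X_1/X_0$ (after correctly disposing of $X_0=0$ when $q_2\neq 0$, and of the $q_2=0$ case via the same computation as Remark~\ref{lines}), impose the factorisation $p(x)=-\bar q_2(x-x_0)^3$, and eliminate $x_0$ to reach the sign-definite identity $8|q_1|^2=-9|q_2|^2$, a one-line contradiction. Your elimination is sound, including the degenerate sub-cases $\bar q_1=0$ and $\bar q_1^2+q_1=0$; it is a pleasant cross-check that your intermediate relation $\bar q_1^2=3q_1$ is precisely the paper's condition $\Delta'_{D_q}=0$, even though the two cubics live in different variables. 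What each approach buys: yours is more elementary and shorter, needing neither Theorem~\ref{birat} nor any discriminant computation; the paper's pays the cost of the explicit $\Delta_{D_q}$ because that same quantity and System~\eqref{systemdisc} are reused immediately afterwards in Propositions~\ref{fibre} and~\ref{double} to determine $G_0$ and $G_2$, so the work is amortised over the whole description of $Disc(\mathcal{C})$. One trivially fixable omission: $G_1\subset\mathbb{S}^4$, so you should also note that $q=\infty$ is excluded because its fibre $m_{01}$ is contained in $\mathcal{C}$, placing $\infty$ in $G_0$ (Remark~\ref{lines}, which you already cite).
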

\begin{proof}
Having the explicit parameterisation $\mathcal R$, for any $q\in \HH$, we compute, according to Formula \eqref{determinantA}, the associated determinant 
$$
D_{q}(v)=v^{3}-v^{2}\bar q_{1}+vq_{1}-|q_{1}|^{2}-|q_{2}|^{2}.
$$
For this polynomial the two quantities $\Delta_{D_{q}}$ and $\Delta_{D_{q}}'$ are respectively
$$
-\Delta_{D_{q}}= 4q_1^3+8|q_1|^4+4\bar q_1^3|q_1|^2+36|q_1|^2|q_2|^2+4\bar q_1^3|q_2|^2+27|q_2|^4
,\textrm{\ and \ } \Delta_{D_{q}}'=\bar q_{1}^{2}-3q_{1}.
$$
Decomposing the two equations $\Delta_{D_{q}}=0$ and $\Delta_{D_{q}}'=0$ in their real and imaginary parts 
and setting $z:=|q_2|$ and $q_1=x+iy$, we obtain the following two systems:
\begin{equation}\label{systemdisc}
\Delta_{D_{q}}=0 \Leftrightarrow
  \begin{cases}
  27z^4+z^2(36(x^2+y^2)+4x(x^2-3y^2))
  +4x(x^2-3y^2)(x^2+y^2+1)+8(x^2+y^2)^2=0\\
  4y(3x^2-y^2)(1-(x^2+y^2)-z^2)=0.
 \end{cases}
\end{equation}
and
\begin{equation*}
\Delta_{D_{q}}'=0 \Leftrightarrow
 \begin{cases}
  x^{2}-y^{2}-3x=0\\
  2xy+3y=0.
   \end{cases}
\end{equation*}
The last system represents the intersection of a hyperbola with the product of two lines (see Figure~\ref{hyperbola}).
\begin{figure}[ht]
\centering
\includegraphics[width=0.3\linewidth, height=5cm]{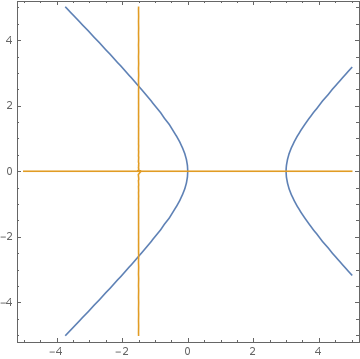}
\caption{The set $\Delta_{D_{p}}'=0$ is given by the intersection of the blue
hyperbola $x^{2}-y^{2}-3x=0$ with the two yellow lines  
$2xy+3y=0$.}
\label{hyperbola}
\end{figure}
The solutions of the equation $\Delta_{D_{p}}'=0$ are then $q=q_{2},3+q_{2},-3/2\pm3\sqrt{3}/2i+q_{2}$
for any $q_{2}\in\C$.

We study the set of triple points by substituting these four solutions in System \eqref{systemdisc}.
\begin{itemize}
\item For $q_{1}=0$, the discriminant $\Delta_{D_q}$ vanishes only for $q_{2}=0$, but the point $q=0$
belongs to $G_{0}$.
\item For $q_{1}=3$, the equation $\Delta_{D_{q}}=0$ becomes 
$$
27 z^4+432 z^2+1728=0,
$$
that has no real solution.
\item For $q_{1}=-3/2\pm3\sqrt{3}/2i$, it holds $|q_{1}|=3$ and $(-3/2\pm3\sqrt{3}/2i)^{3}=27$,
therefore, in both cases, the equation $\Delta_{D_{q}}=0$ becomes, as before, 
$$
27 z^4+432 z^2+1728=0,
$$
which, again, has no real solution.
\end{itemize}
Therefore, we have proved that $G_{1}=\emptyset$.
\end{proof}

We now pass to the study of twistor fibers. 

\begin{prop}\label{fibre}
Let $\mathcal{C}\subset \mathbb{CP}^3$ be the cubic of equation~\eqref{scroll}. Then the set of twistor fibres $G_{0}$ of $\mathcal{C}$
is equal to
$$
G_0=\left\{0,-1,\frac{1}{2}+i\frac{\sqrt{3}}{2},\frac{1}{2}-i\frac{\sqrt{3}}{2},\infty\right\}\subset\hat{\C}_i.$$
\end{prop}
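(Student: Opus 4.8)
The plan is to decide, for each $q\in\HH$, whether the entire twistor fibre $\pi^{-1}(q)$ is contained in $\mathcal{C}$, using the parameterisation $\mathcal{R}$ and the machinery of Theorem~\ref{systempre-images}. Since $R$ splits on $\C_i$ with $g(v)=-v^2$, $\hat g(v)=v$ and $h=\hat h\equiv 0$, the associated determinant
\[
D_q(v)=v^3-\bar q_1 v^2+q_1 v-|q_1|^2-|q_2|^2
\]
is \emph{monic} of degree $3$, hence $D_q\not\equiv 0$ for every $q$. Consequently only the first and third bullets of Theorem~\ref{systempre-images} can occur, and the only way to obtain a whole $\mathbb{CP}^1$ of solutions of System~\eqref{system} over a fixed $q$ (rather than finitely many) is the first bullet, namely $A_q(v_0)=0$ for some $v_0$. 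For finite $q$ the map $\mathcal{R}$ covers the fibre (Theorem~\ref{birat}, as the fibre is disjoint from $m_{01}$, which lies over $\infty$), so $\pi^{-1}(q)\subset\mathcal{C}$ holds if and only if $A_q(v_0)=0$ for some $v_0$.

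The next step is to solve $A_q(v_0)=0$ explicitly. For $R$ one has
\[
A_q(v)=\begin{pmatrix} -v^2-q_1 & \bar q_2\\ -q_2 & v-\bar q_1\end{pmatrix},
\]
so the vanishing of all four entries forces $q_2=0$, $v_0=\bar q_1$ and $v_0^2=-q_1$, i.e.\ $q_2=0$ together with $\bar q_1^2+q_1=0$. This is consistent with the alternative, purely intersection-theoretic route: a fibre lies in $\mathcal{C}$ exactly when the binary cubic~\eqref{intersection} in the homogeneous fibre coordinates $[X_0,X_1]$ vanishes \emph{identically}, and imposing the vanishing of all its coefficients $q_2^2$, $2\bar q_1 q_2$, $\bar q_1^2+q_1$, $-\bar q_2$ yields the same two conditions $q_2=0$ and $\bar q_1^2+q_1=0$. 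In particular every finite point of $G_0$ lies in $\C_i$.

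It then remains to solve $\bar q_1^2+q_1=0$ on $\C_i$. Writing $q_1=x+iy$ and separating real and imaginary parts gives
\[
\begin{cases} x^2-y^2+x=0\\ y(1-2x)=0,\end{cases}
\]
whose solutions are $q_1\in\left\{0,\,-1,\,\tfrac{1}{2}+i\tfrac{\sqrt{3}}{2},\,\tfrac{1}{2}-i\tfrac{\sqrt{3}}{2}\right\}$, that is, $0$ together with the three cube roots of $-1$. These four points are precisely the finite elements of $G_0$, and they all lie on $\hat{\C}_i$.

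Finally, the point $\infty$ must be treated by hand, since $\mathcal{R}$ does not cover $m_{01}$: by Remark~\ref{lines} the line $m_{01}$ is itself a twistor fibre, it satisfies~\eqref{scroll} identically (as $X_0=X_1=0$), and $\pi(m_{01})=\infty$, so $\infty\in G_0$. Collecting the finite points with $\infty$ gives
\[
G_0=\left\{0,\,-1,\,\tfrac{1}{2}+i\tfrac{\sqrt{3}}{2},\,\tfrac{1}{2}-i\tfrac{\sqrt{3}}{2},\,\infty\right\}\subset\hat{\C}_i,
\]
as claimed. I expect the only genuine subtlety to be conceptual rather than computational: one must use that $D_q$ is always a true cubic to rule out the intermediate ($D_q\equiv 0$) bullet of Theorem~\ref{systempre-images}, insist on the \emph{identical} vanishing of the fibre-cubic (all coefficients, not merely a repeated root), and handle $\infty$ separately because it escapes the affine parameterisation $\mathcal{R}$.
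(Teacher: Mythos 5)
Your proof is correct and follows essentially the same route as the paper's: both solve $A_q(v_0)=0$ for the lift $\mathcal{R}$ to obtain $q_2=0$ and $\bar q_1^2+q_1=0$ (hence $q_1\in\{0,-1,\tfrac12\pm i\tfrac{\sqrt3}{2}\}$), both rule out further fibres by observing that $D_q(v)=v^3-\bar q_1v^2+q_1v-|q_1|^2-|q_2|^2$ is monic and so never the zero polynomial, and both obtain $\infty\in G_0$ separately from the line $m_{01}$ as in Remark~\ref{lines}. Your extra cross-check via the identical vanishing of the binary cubic~\eqref{intersection} is exactly the computation the paper performs in Remark~\ref{lines}, and the sign discrepancy in the second column of your $A_q(v)$ relative to the paper's display is a harmless convention (it flips $D_q$ by a sign without changing its roots or the vanishing conditions).
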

\begin{proof}
In Remark~\ref{lines} we have already seen that $\{0,-1,\frac{1}{2}+i\frac{\sqrt{3}}{2},\frac{1}{2}-i\frac{\sqrt{3}}{2},\infty\}\subset G_{0}$. We now show that there are no other elements in $G_{0}$ using the parameterisation given by $\mathcal{R}$.
For any $q\in \HH$, the matrix $A_q$ associated with $\mathcal R$, is given by
\begin{equation*}
	A_q(v)=
\begin{pmatrix}
	 -v^{2}-q_1 &  -\bar q_2\\
	-q_2 & \bar q_1-v
\end{pmatrix},
\end{equation*}
therefore $A_{q}\equiv 0$ if and only if $q_{2}=0$ and the following system of equations
\begin{equation*}
\begin{cases}
 -v^{2}-q_1=0\\
  \bar q_1-v=0,
\end{cases}
\end{equation*}
is satisfied, i.e. if and only if $q_{1}+\bar q_{1}^{2}=0$ and $v^{2}+\bar v=0$. The first of the two equations gives
$q_{1}\in\{0,-1,1/2\pm\sqrt{3}/2\}$ and the same values for $v$. 
Thanks to Theorem~\ref{systempre-images}, for any $q_{1}\in\{0,-1,1/2\pm\sqrt{3}/2\}$ the system related to the matrix $A_{q}$ has a set of solutions biholomorphic to $\mathbb{CP}^{1}$, implying that $\{0,-1,1/2\pm\sqrt{3}/2\}\subset G_{0}$. The determinant of $A_{q}$ is given (as in proof of Proposition~\ref{triple}) by
$$
D_{q}(v)=v^{3}-v^{2}\bar q_{1}+vq_{1}-|q_{1}|^{2}-|q_{2}|^{2}.
$$
Since $D_{q}$  is never the zero polynomial, then the only other element that possibly lie in $G_{0}$ is $\infty$
which was already found in Remark~\ref{lines}.
\end{proof}

We conclude the study of $Disc (\mathcal C)$ by describing the set of double points.
\begin{prop}\label{double}
Let $\mathcal{C}\subset \mathbb{CP}^3$ be the cubic of equation~\eqref{scroll}. Then the set of double points $G_{2}$ of $\mathcal{C}$
is homeomorphic to a $2$-sphere with six handles pinched at $G_{0}$ without the attaching points, i.e.
$$
G_{2}=\left(\hat{\C}_i\bigcup_{\substack{P=0,\infty\\ z_m^3=-1,\, z_m\in\C_i}}\Sigma_{P,z_m}\right)\setminus G_0,
$$
where $\Sigma_{P,z_m}$ denotes a handle pinched to $\hat{\C}_i$ at $P$ and $z_m$. 
\end{prop}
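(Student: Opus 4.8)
The plan is to identify $G_2$ with the locus where the cubic $D_q(v)=v^{3}-v^{2}\bar q_1+vq_1-|q_1|^2-|q_2|^2$ from Formula~\eqref{determinantA} acquires a repeated root, and then to read off its topology from the real and imaginary parts of the discriminant. First I would reduce the problem to this discriminant locus. By Theorem~\ref{birat} the parameterisation $\mathcal R$ is generically one-to-one, failing to be injective only over $m_{01}\cup m_{13}$, whose $\pi$-image is contained in $\hat\C_i\cup\{\infty\}$; hence for $q$ with $q_2\neq 0$ the cardinality $\#(\pi^{-1}(q)\cap\mathcal C)$ equals the number of \emph{distinct} roots of $D_q$, by Theorem~\ref{systempre-images} and Corollary~\ref{mainresult}. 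Since Proposition~\ref{triple} shows there are no triple points (so no $q$ off the special lines yields a triple root) and Proposition~\ref{fibre} determines $G_0$, a point $q$ with $q_2\neq0$ lies in $G_2$ precisely when $D_q$ has a simple double root, i.e. when $\Delta_{D_q}=0$. Combining this with the inclusion $(\hat\C_i\setminus G_0)\subset G_2$ from Remark~\ref{lines}, I obtain
\[
G_2=\big(\hat\C_i\cup\{\Delta_{D_q}=0,\ q_2\neq0\}\big)\setminus G_0 ,
\]
so it remains to determine the real surface $\{\Delta_{D_q}=0,\ q_2\neq0\}$ and to recognise it as six handles.

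Next I would exploit that $D_q$ depends on $q_2$ only through $|q_2|^2$, so the discriminant locus is invariant under rotating the phase of $q_2$; writing $q_1=x+iy$ and $z=|q_2|$ I would work in the coordinates $(x,y,z)$ and recover the surface in $\HH$ by revolving each solution curve along the circle $|q_2|=z$. As recorded in System~\eqref{systemdisc}, the imaginary part of $\Delta_{D_q}$ factors as $4y(3x^2-y^2)\big(1-(x^2+y^2)-z^2\big)$, splitting the analysis into three cases: the sphere $x^2+y^2+z^2=1$, and the six rays $y(3x^2-y^2)=0$, i.e. $\arg(q_1)\in\{k\pi/3\}$, along which $q_1^{3}\in\R$.

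For the sphere I would substitute $z^2=1-(x^2+y^2)$ into the real part of $\Delta_{D_q}$; writing $q_1=re^{i\theta}$ this reduces to $8r^{3}\cos3\theta=r^{4}+18r^{2}-27$, and since $F(r)=(r^{4}+18r^{2}-27)/(8r^{3})$ satisfies $F'(r)=(r^{2}-9)^{2}/(8r^{4})\ge0$, it is strictly increasing on $(0,1)$ with $F(1)=-1$; hence $\cos3\theta=F(r)$ has no solution for $r<1$ and the sphere only touches the base $z=0$ at the cube roots of $-1$, contributing nothing new. On the six rays $\RRe(q_1^{3})=\pm r^{3}$: the three with $q_1^{3}>0$ make the real part of $\Delta_{D_q}$ a sum of nonnegative terms, admitting no solution with $z>0$; the three with $q_1^{3}<0$, i.e. the directions of the cube roots $z_m$ of $-1$, turn the real part into the quadratic $27(z^{2})^{2}+4r^{2}(9-r)z^{2}-4r^{3}(r-1)^{2}=0$, whose product of roots is non-positive, so there is a unique positive root $z^{2}(r)$ for every $r>0$, $r\neq1$, vanishing exactly at $r=0$, at $r=1$, and as $r\to\infty$.

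Finally I would assemble the picture. On each cube-root ray the arc $r\in(0,1)$ is pinched at $q=0$ and at $q_1=z_m$, while the arc $r\in(1,\infty)$ is pinched at $q_1=z_m$ and, after compactification in $\SF^4$, at $q=\infty$; revolving along $|q_2|=z$ collapses the two endpoints (where $z=0$) to points and turns each arc into a tube pinched to $\hat\C_i$ at the two corresponding elements of $G_0$. This produces exactly the handles $\Sigma_{P,z_m}$ with $P\in\{0,\infty\}$ and $z_m^{3}=-1$, two for each of the three cube roots, hence six in total; together with the base $\hat\C_i$ arising from $q_2=0$, and after deleting the attaching points $G_0$, this yields the stated homeomorphism type. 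I expect the main obstacle to be the case analysis itself—in particular the monotonicity argument ruling out the sphere and the careful reading of the revolved arcs as genuine handles pinched exactly at $G_0$ (including the behaviour at $r=1$ and at infinity)—together with verifying that each double root of $D_q$ indeed yields two distinct points of $\pi^{-1}(q)\cap\mathcal C$ rather than one collapsed by the non-injectivity of $\mathcal R$.
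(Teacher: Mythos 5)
Your proposal is correct and takes essentially the same route as the paper's proof: reduce membership in $G_2$ for $q_2\neq 0$ to the vanishing of $\Delta_{D_q}$ (via Theorem~\ref{birat}, Theorem~\ref{systempre-images} and Propositions~\ref{triple} and~\ref{fibre}), split System~\eqref{systemdisc} according to the factorisation of its imaginary part, and revolve the curves on the cube-root rays into the six handles; your polar-coordinate packaging (the monotone $F(r)$ ruling out the sphere, $\cos 3\theta=\pm1$ on the six rays) is equivalent to the paper's Cartesian cases $y=0$, $y^2=3x^2$, $y^2=1-x^2-z^2$, and your coefficient $4r^2(9-r)$ is in fact the correct one (the paper's Equation~\eqref{eqquinti3} misprints $4x^2(4+x)$ for $4x^2(9+x)$, with no effect on its conclusions). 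The only slip is your claim that the positive root $z^2(r)$ of $27z^4+4r^2(9-r)z^2-4r^3(r-1)^2=0$ vanishes as $r\to\infty$ --- it actually grows like $\tfrac{4}{27}r^3$ --- but this is harmless, since the unbounded arc is pinched at $\infty$ simply because $|q|\to\infty$ in the one-point compactification $\SF^4$, exactly as you then correctly conclude.
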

\begin{proof}

First of all, thanks to Remark~\ref{lines}, the set $(\C_{i}\setminus G_0)\subset G_{2}$. Then, since the set of triple points is empty, it is sufficient to study the zero set of the discriminant of
$D_{q}$, i.e. the set of solutions of System~\eqref{systemdisc}.
Since the second equation of the system factorises into three simpler parts, let
us study them separately.

\noindent
\textbf{First case: $y=0$.} We get the following equation
\begin{align}
p_{1}(x,z)= 
27z^{4}+z^{2}[4x^{2}(4+x)]+4x^{3}(x+1)^{2}=0.\label{eqquinti3}
\end{align}
This equation defines a curve which intersects the $x$-axis only at $(x,z)=(0,0),(-1,0)$ (and at $\infty$).
By Equation~\eqref{eqquinti3}, it is not difficult to see that the curve is contained in the half-plane $x\leq 0$.
Now for $x\leq 0$ we can solve Equation~\eqref{eqquinti3} for $z^{2}$, obtaining
$$
z^{2}=\frac{2}{27}(-x^{2}(4+x)\pm\sqrt{x^{4}(4+x)^{2}-27x^{3}(x+1)^{2}}).
$$
Since for $x\leq0$,  $-x^{2}(4+x)\leq\sqrt{x^{4}(4+x)^{2}-27x^{3}(x+1)^{2}}$ and equality holds for
$x=0,-1$, then, the only admissible solution is
$$
z^{2}=\frac{2}{27}(-x^{2}(4+x)+\sqrt{x^{4}(4+x)^{2}-27x^{3}(x+1)^{2}})=f(x),
$$
For $x\leq 0$, the function $f(x)$ appearing on the right hand side of the previous equation is always non-negative, differentiable for any $x\neq 0, -1$ where it is equal to $0$ and it diverges at $+\infty$ for $x$ that goes to $-\infty$.
The curve $p_{1}(x,z)=0$ is then given by the union of the graphs of $\pm\sqrt{f(x)}$, for $x\leq 0$ (see Figure~\ref{fig:subim1}).
\begin{figure}[h]
\includegraphics[width=0.35\linewidth]{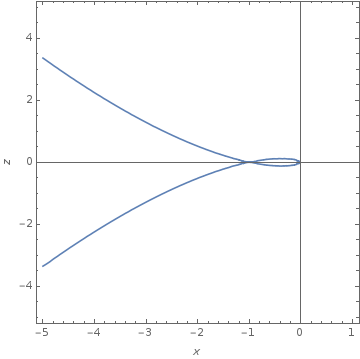} 
\caption{$\{p_{1}(x,z)=0\}$}
\label{fig:subim1}
\end{figure}

\noindent
\textbf{Second case: $y^{2}=3x^{2}$.} The set of points satisfying $y^{2}-3x^{2}=0$ is the union of the two lines $y=\pm\sqrt{3}x$ in the $xy$-plane. These are the images of the line $y=0$ under the rotations around the $z$-axis of $2\pi/3$ and of $4\pi/3$. Notice that the first equation in System~\eqref{systemdisc}
is invariant under these two rotations. Therefore the set of solutions of $\Delta_{p}=0$ for $y^{2}-3x^{2}=0$
consists exactly of two copies of the curve $p_{1}(x,z)=0$ obtained by rotations of angles $2\pi/3$ and $4\pi/3$ around the $z$-axis. In particular these curves intersect the $xy$-plane at the origin and at 
$(1/2,-\sqrt{3}/2)$ and $(1/2,\sqrt{3}/2)$, respectively.

\noindent
\textbf{Third case: $y^{2}=1-x^{2}-z^{2}$.} In this case, System~\eqref{systemdisc} reduces to 
$$
z^{4}-4(5+6x)z^{2}-8(4x^{3}-3x+1)=0.
$$
Solving for $z^{2}$ we get
$$
z^{2}=2(5+6x\pm\sqrt{(3+2x)^{3}}).
$$
For any $|x|<1$, $5+6x-\sqrt{(3+2x)^{3}}\leq0$ and equals zero only for $x=1/2$, giving again the points $(1/2,\pm \sqrt{3}/2)$ in the $xy$-plane.
On the other hand  $0\leq 5+6x+\sqrt{(3+2x)^{3}}\leq1-x^{2}$ if and only if $x=-1$, corresponding to the already found point $(-1,0)$ in the $xy$-plane.
Thus this case does not give any new contribution to the set of double points.

Recall now that the variable $z$ equals $|q_{2}|$.  Therefore the three curves found previously in the first and second cases, represent in fact three surfaces in $\SF^{4}$: $p_1(x,|q_2|)=0$ is hence a sphere with an equator collapsed to a point, 
intersecting $\hat{\C}_{i}$ only at $0, -1$ and at $\infty$. 
Removing these three points its connected components (two handles) are both homeomorphic to a cylinder. The other four handles are obtained by rotation around the $z$-axis of $2\pi/3$ and $4\pi/3$. 


\end{proof}

Collecting the results contained in Remark~\ref{lines} and in Propositions~\ref{triple}, \ref{fibre} and~\ref{double},
we can state the main theorem of this section.

\begin{theorem}\label{discriminant}
The discriminant locus of the cubic scroll $\mathcal{C}\subset \mathbb{CP}^3$ of equation~\eqref{scroll} is given by the singular surface 
$$
Disc(\mathcal{C})=\Sigma=\hat{\C}_i\bigcup_{\substack{P=0,\infty\\ z_m^3=-1, z_m\in\C_i}}\Sigma_{P,z_m},
$$
where $\Sigma_{P,z_m}$ denotes an handle pinched to $\hat{\C}_i$ at $P$ and $z_m$. 
In particular we have that 
$$
G_0=\left\{0,-1,\frac{1}{2}+i\frac{\sqrt{3}}{2},\frac{1}{2}-i\frac{\sqrt{3}}{2},\infty\right\}\subset\C_i,\quad G_1=\emptyset,\quad G_2=\Sigma\setminus G_0.
$$
\end{theorem}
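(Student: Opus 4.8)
The plan is to prove the theorem as a collection result, exploiting the stratification of the discriminant locus forced by $\deg\mathcal{C}=3$. Since a generic twistor fibre meets a cubic in exactly three points, we have $Disc(\mathcal{C})=G_0\cup G_1\cup G_2$, so it suffices to identify the three strata separately and then check how they fit together. The subtlety governing the whole argument is that the birational parameterisation $\mathcal{R}$ of Theorem~\ref{birat} is only generically $1{:}1$, failing precisely along $m_{01}\cup m_{13}$; hence the fibres projecting onto $\pi(m_{01}\cup m_{13})=\{\infty\}\cup\hat{\C}_i$ must be treated by hand, as in Remark~\ref{lines}, while every other fibre is controlled by the matrix $A_q(v)$, its determinant $D_q(v)$, and Theorem~\ref{systempre-images}.

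First I would dispose of $G_0$ and $G_1$. By Proposition~\ref{fibre}, the only fibres entirely contained in $\mathcal{C}$ lie over the five points $\{0,-1,\frac{1}{2}\pm i\frac{\sqrt3}{2},\infty\}\subset\hat{\C}_i$: the four finite ones are exactly the solutions of $q_1+\bar q_1^2=0$ with $q_2=0$, detected by the vanishing $A_q\equiv 0$, and $\infty$ comes from $m_{01}$. This gives $G_0$ explicitly. Next, Proposition~\ref{triple} yields $G_1=\emptyset$: a triple point would force $\Delta_{D_q}=\Delta_{D_q}'=0$ simultaneously, and substituting the four families of solutions of $\Delta_{D_q}'=0$ into $\Delta_{D_q}=0$ produces either a point already in $G_0$ or the equation $27z^4+432z^2+1728=0$, which has no real root $z=|q_2|$. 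Thus $Disc(\mathcal{C})=G_0\cup G_2$.

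It then remains to assemble $G_2$ and close it up. Proposition~\ref{double} identifies $G_2$ with the zero locus of $\Delta_{D_q}$ outside $G_0$ and---after setting $z=|q_2|$, so that each planar curve in the $(x,z)$-plane sweeps a surface of revolution in $\SF^4$---describes it as the sphere $\hat{\C}_i$ carrying six handles, each pinched to $\hat{\C}_i$ at two points, with the attaching points deleted. Concretely, the three factors of the second equation of System~\eqref{systemdisc} (namely $y=0$, $y^2=3x^2$, and $y^2=1-x^2-z^2$) yield the basic curve $p_1=0$ together with its two rotations by $2\pi/3$ and $4\pi/3$ about the imaginary axis, the third factor contributing nothing new. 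Each such curve, being a pinched sphere touching $\hat{\C}_i$ at $0,-1,\infty$, splits upon deleting these three points into two cylindrical handles, one joining the cube root of $-1$ to the pole $0$ and one joining it to $\infty$; the two rotations carry $-1$ to the other two cube roots of $-1$, so altogether one obtains the six handles $\Sigma_{P,z_m}$ with $P\in\{0,\infty\}$ and $z_m^3=-1$. Since $G_1=\emptyset$ and $G_2=\Sigma\setminus G_0$, the full locus is $Disc(\mathcal{C})=G_0\cup G_2=\Sigma$: adding back the points of $G_0$ reattaches every handle at its two pinch points.

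The genuine obstacle is entirely contained in Proposition~\ref{double}, i.e.\ in translating the algebra of $\Delta_{D_q}=0$ into the stated topology. One must verify that the surface of revolution generated by $p_1(x,|q_2|)=0$ is indeed a pinched sphere meeting $\hat{\C}_i$ only at $0,-1,\infty$, that the rotational invariance of the first equation of System~\eqref{systemdisc} legitimately transports this curve onto the loci $y^2=3x^2$, and that the would-be third family collapses onto already-found points. Once this topological bookkeeping is in place, the present theorem is a direct recollection of Remark~\ref{lines} and of Propositions~\ref{triple}, \ref{fibre} and~\ref{double}.
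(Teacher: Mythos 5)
Your proposal is correct and takes essentially the same route as the paper, whose proof of Theorem~\ref{discriminant} consists precisely in collecting Remark~\ref{lines} together with Propositions~\ref{triple}, \ref{fibre} and~\ref{double}, all of which you summarise accurately. One small caveat on wording: $G_2$ is not literally the vanishing locus of $\Delta_{D_q}$ away from $G_0$ --- for $q_2=0$ one has $D_q(v)=(v-\bar q_1)(v^2+q_1)$, generically with three distinct roots, so the stratum $\hat{\C}_i\setminus G_0$ enters $G_2$ only through the $2{:}1$ behaviour of $\mathcal{R}$ along $m_{13}$ (Remark~\ref{lines}) --- but since you flag those fibres as treated by hand, exactly as the paper does, this does not affect the argument.
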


\begin{figure}[ht]
\centering
\includegraphics[scale=0.35]{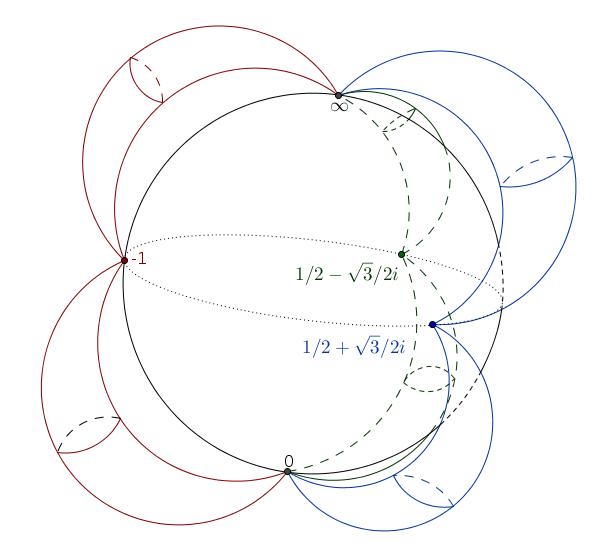}
\caption{The set $\Sigma\subset\SF^4$. The big black round sphere represents $\hat{\C}_{i}$. The two red handles represent $\Sigma_{0,-1}$ and $\Sigma_{\infty,-1}$. The two green handles represent $\Sigma_{0,1/2-\sqrt{3}/2}$ and $\Sigma_{\infty,1/2-\sqrt{3}/2}$. The two blue handles represent $\Sigma_{0,1/2+\sqrt{3}/2}$ and $\Sigma_{\infty,1/2+\sqrt{3}/2}$}
\end{figure}

\subsection{A first description of the OCS's induced by $\mathcal{C}$}
We will now give some qualitative results on the nature of the 3 OCS's induced by the cubic scroll $\mathcal{C}$.

A slightly different approach in twistor theory relies on the definition of twistor fibers by means
of a map $j$ which swaps any OCS $J$ into $-J$. In our case $j$ is the transformation defined on 
$\mathbb{CP}^{3}$ induced, via $\pi$, by the quaternionic multiplication by $j$ on $\mathbb{HP}^{1}$.
To be more precise $j:\mathbb{CP}^{3}\to\mathbb{CP}^{3}$ is the map defined as
\begin{equation*}
j:[X_{0},X_{1},X_{2},X_{3}]\mapsto[-\bar X_{1},\bar X_{0},-\bar X_{3},\bar X_{2}].
\end{equation*}
This map $j$ is an antiholomorphic involution and it has no fixed points. 
Starting with such a map $j$, one can recover the twistor fibres in the following way: consider a point $X\in\mathbb{CP}^{3}$
and the unique projective line $l$ connecting $X$ and $j(X)$. All the lines constructed in this way form the set of twistor fibres. In particular, a line $l$ in $\mathbb{CP}^{3}$ is a fibre for $\pi$ if and only if $l=j(l)$.

%
%
Thanks to the explicit description of the map $j$, we are able to give a first
qualitative result on the three OCS's induced by the cubic $\mathcal{C}$.

\begin{prop}\label{jC}
Let $\mathcal{C}\subset \mathbb{CP}^3$ be the cubic scroll of equation~\eqref{scroll}. Then the intersection of $\mathcal C$
with $j(\mathcal C)$ consists of the following set
\begin{equation*}
j(\mathcal{C})\cap\mathcal{C}= m_{02}\cup m_{13} \cup \pi^{-1} (G_{0}).
\end{equation*}
where, in particular $j(m_{02})=m_{13}$. 
\end{prop}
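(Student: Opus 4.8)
The plan is to turn the geometric statement into an explicit system of two cubic equations and solve it by hand. First I would determine the equation cutting out $j(\mathcal{C})$. Since $j$ is an involution, a point $[X_0,X_1,X_2,X_3]$ lies in $j(\mathcal{C})$ if and only if $j[X_0,X_1,X_2,X_3]=[-\bar X_1,\bar X_0,-\bar X_3,\bar X_2]$ lies in $\mathcal{C}$; substituting into $X_0X_3^2+X_1^2X_2=0$ and taking complex conjugates shows that $j(\mathcal{C})$ is the cubic $\{X_1X_2^2+X_0^2X_3=0\}$. Thus $j(\mathcal{C})\cap\mathcal{C}$ is exactly the common solution set of
\begin{equation*}
X_0X_3^2+X_1^2X_2=0,\qquad X_1X_2^2+X_0^2X_3=0.
\end{equation*}

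For the inclusion $\supseteq$ I would argue on each of the three pieces. The two directrix lines are handled by direct substitution: on $m_{02}=\{X_0=X_2=0\}$ and on $m_{13}=\{X_1=X_3=0\}$ both cubics vanish identically, and a one-line computation gives $j[0,X_1,0,X_3]=[-\bar X_1,0,-\bar X_3,0]$, so $j(m_{02})=m_{13}$. For $\pi^{-1}(G_0)$ I would invoke the characterization of twistor fibres recalled just above the statement: every fibre $\ell$ satisfies $\ell=j(\ell)$. By definition of $G_0$ (computed explicitly in Proposition~\ref{fibre}) each such fibre is contained in $\mathcal{C}$, hence $\ell=j(\ell)\subseteq j(\mathcal{C})$ as well, so $\ell\subseteq\mathcal{C}\cap j(\mathcal{C})$.

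The substantial part is the reverse inclusion, which I would obtain by solving the system above through a case analysis on the vanishing of $X_0$ and $X_1$. If $X_0=0$ the two equations reduce to $X_1^2X_2=0$ and $X_1X_2^2=0$, forcing $X_1=0$ or $X_2=0$, i.e.\ the lines $m_{01}$ and $m_{02}$. If $X_1=0$ (with $X_0\neq0$) both equations force $X_3=0$, giving $m_{13}$. In the remaining chart $X_0\neq0$, $X_1\neq0$ I would normalize $X_0=1$, solve the second equation for $X_3=-X_1X_2^2$, and substitute into the first to obtain $X_1^2X_2(X_2^3+1)=0$. Since $X_1\neq0$ this leaves $X_2=0$, which yields $X_3=0$ and hence $m_{23}$, or $X_2^3=-1$, which yields one line for each cube root $\zeta$ of $-1$.

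Finally I would identify the lines produced with twistor fibres over $G_0$. Using System~\eqref{eqfiber} one checks that $m_{01}=\pi^{-1}(\infty)$ and $m_{23}=\pi^{-1}(0)$, while for a cube root $\zeta$ of $-1$ the line $\{[1,X_1,\zeta,-X_1\zeta^2]\}$ is the fibre $\pi^{-1}(\zeta)$; here the key identity is $-\zeta^2=\bar\zeta$, which holds because $|\zeta|=1$ forces $\zeta^2=-\zeta^{-1}=-\bar\zeta$. Since the cube roots of $-1$ are exactly $-1$ and $\tfrac12\pm i\tfrac{\sqrt3}{2}$, these fibres together with $\pi^{-1}(0)$ and $\pi^{-1}(\infty)$ account for all of $\pi^{-1}(G_0)$, and collecting the cases gives $j(\mathcal{C})\cap\mathcal{C}=m_{02}\cup m_{13}\cup\pi^{-1}(G_0)$. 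The main obstacle is the bookkeeping in this last step: correctly matching each line from the algebraic case analysis to its twistor fibre, in particular verifying the identity $-\zeta^2=\bar\zeta$ so that the lines over the cube roots of $-1$ are genuinely fibres of $\pi$ lying over $G_0$ rather than spurious components of the intersection.
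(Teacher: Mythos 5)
Your proposal is correct and follows essentially the same route as the paper's proof: both identify $j(\mathcal{C})$ with the cubic $\{X_1X_2^2+X_0^2X_3=0\}$, handle the coordinate lines by direct substitution, and in the remaining chart eliminate a variable to reduce the intersection to a cube-root condition (you normalize $X_0=1$ and get $X_2^3=-1$; the paper solves for $X_0$ and gets $X_3^3=-X_1^3$, which is the same three lines). Your explicit matching of those lines with the fibres of \eqref{eqfiber} via $\bar\zeta=-\zeta^2$ is a slightly more careful bookkeeping of what the paper just states as the projections onto $-1$ and $\tfrac12\pm\tfrac{\sqrt3}{2}i$, but it is not a different argument.
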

\begin{proof}
By the defintion of the map $j$, we have that $[X_0,X_1,X_2,X_3]\in j( \mathcal{C})\cap\mathcal{C}$ if and only if 
\begin{equation*}
\begin{cases}
X_{0}X_{3}^{2}+X_{1}^{2}X_{2}=0\\
X_{0}^{2}X_{3}+X_{1}X_{2}^{2}=0.
\end{cases}
\end{equation*}
It is immediate to verify that the lines $m_{01}, m_{02}, m_{13}$ and $m_{23}$ belong to the intersection $j(\mathcal{C})\cap\mathcal{C}$.
The fact that $j(m_{02})=m_{13}$, $\pi(j(m_{02}))=\pi(m_{13})=\hat{\C}_{i}$, $\pi(m_{01})=\infty\in\SF^{4}$ and $\pi(m_{23})=0$ is trivial.
Suppose now that $X_{n}\neq 0$ for all $n=0,1,2,3$, then from the first equation we have that
\begin{equation}\label{eq1}
X_{0}=-\frac{X_{1}^{2}X_{2}}{X_{3}^2}.
\end{equation}
Hence, from the second equation we obtain
\begin{equation*}
X_{1}X_{2}^{2}[X_{3}^{3}+X_{1}^{3}]=0\quad\Leftrightarrow\quad X_{3}^{3}=-X_{1}^{3}. 
\end{equation*}
Therefore we obtain the three planes defined by the following equations
\begin{equation*}
X_{3}=-X_{1},\qquad X_{3}=\left(\frac{1}{2}-\frac{\sqrt{3}}{2}i\right)X_{1},\qquad X_{3}=\left(\frac{1}{2}+\frac{\sqrt{3}}{2}i\right)X_{1}.
\end{equation*}
Substituting the last three equations in Formula~\eqref{eq1}, we get the three lines projecting
on $-1, \frac{1}{2}\pm\frac{\sqrt{3}}{2}i\in\SF^{4}$ via $\pi$:
\begin{equation*}
m_{-1}:\begin{cases}
X_{3}=-X_{1}\\
X_{2}=-X_{0}
\end{cases},\quad m_{\frac 1 2-\frac {\sqrt{3}}{ 2}i}:\begin{cases}
X_{3}=\left(\frac{1}{2}-\frac{\sqrt{3}}{2}i\right)X_{1}\\
X_{2}=-\left(\frac{1}{2}-\frac{\sqrt{3}}{2}i\right)^{2}X_{0}
\end{cases}
,\quad m_{\frac 1 2+\frac {\sqrt{3}}{ 2}i}:\begin{cases}
	X_{3}=\left(\frac{1}{2}+\frac{\sqrt{3}}{2}i\right)X_{1}\\
	X_{2}=-\left(\frac{1}{2}+\frac{\sqrt{3}}{2}i\right)^{2}X_{0}.
\end{cases}
\end{equation*}

\end{proof}
A picture of the set $j(\mathcal{C})\cap\mathcal{C}$ is given in Figure~\ref{jay}, where it is also highlighted
its twistor projection.

\begin{figure}[ht]
\includegraphics[width=0.3\linewidth]{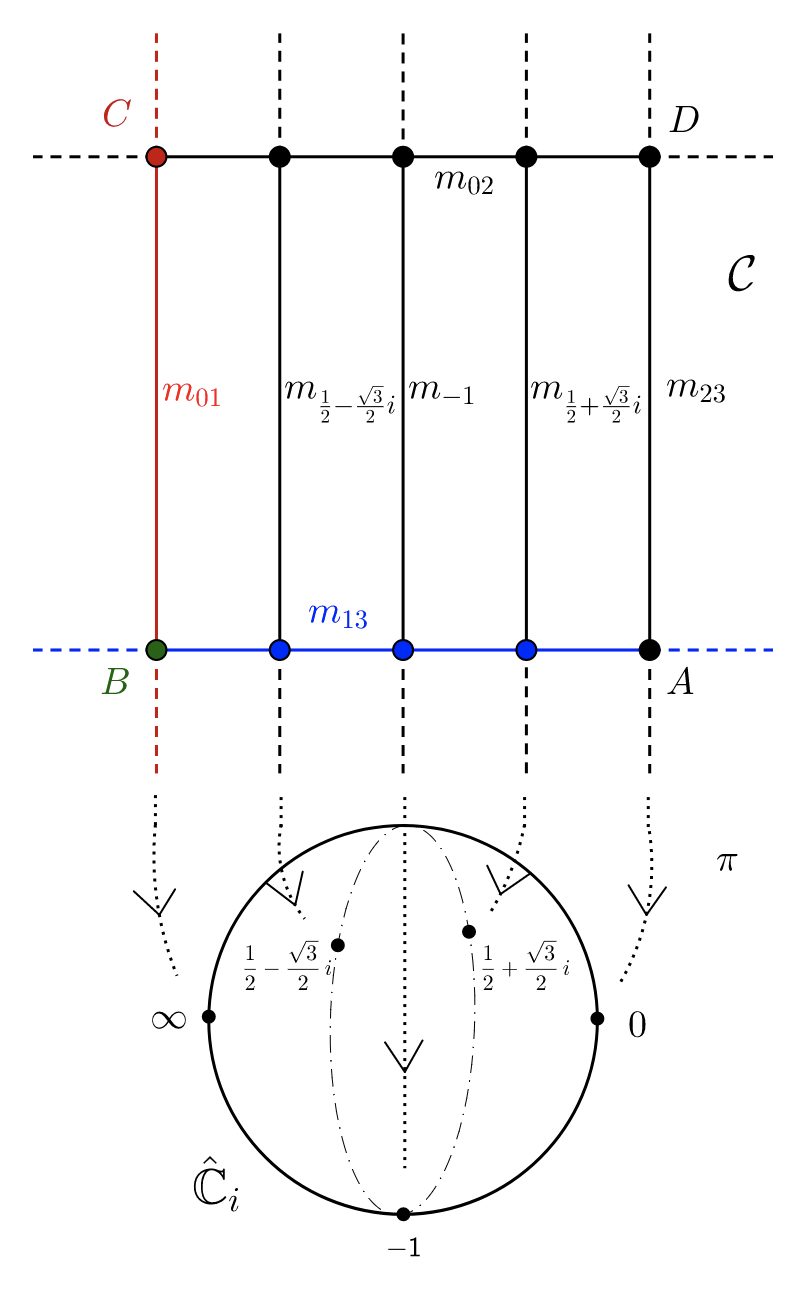}
\caption{The set $j(\mathcal{C})\cap\mathcal{C}$ and its twistor projection.}
\label{jay}
\end{figure}

In the language of independent OCS's, the last proposition has as main implication the following result.


\begin{cor}\label{3ocs}
The manifold $\SF^{4}\setminus \Sigma$ admits 3 non-constant strongly independent OCS's. 
\end{cor}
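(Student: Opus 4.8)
The plan is to realise the three structures as the three local sheets of $\mathcal{C}$ over $\SF^4\setminus\Sigma$ and to read off their mutual independence directly from Proposition~\ref{jC}. I would start from the general correspondence (see~\cite[Section 2.6]{salamonviac}): a complex surface in $\mathbb{CP}^3$ transverse to the fibres of $\pi$ and locally a single-valued graph induces an OCS on the corresponding subdomain of $\SF^4$. By Theorem~\ref{discriminant} the cubic $\mathcal{C}$ has degree $3$ and $Disc(\mathcal{C})=\Sigma$; since the locus of fibres contained in $\mathcal{C}$ (namely $G_0$), the triple locus $G_1=\emptyset$, and the tangency locus $G_2$ (where two of the three intersection points collapse) are all contained in $\Sigma$, over each $p\in\SF^4\setminus\Sigma$ the fibre $\pi^{-1}(p)$ meets $\mathcal{C}$ transversally in three distinct points. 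Thus near each such $p$ the surface $\mathcal{C}$ is the union of three single-valued graphs, each giving an OCS; denote these by $J_1,J_2,J_3$. By construction each $J_k$ is the push-forward of the slice structure $\mathbb{J}$ through the branch of $R$, $R^\vee$ or $R_\R$ whose pre-image realises the corresponding point of $\mathcal{C}\cap\pi^{-1}(p)$, as in Corollary~\ref{mainresult}.

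The core of the argument is strong independence, which follows cleanly from Proposition~\ref{jC}. Recall that $j$ is the fibrewise antipodal involution, so $\pi\circ j=\pi$ and $j(X)$ encodes the OCS opposite to the one encoded by $X$. Fix $p\in\SF^4\setminus\Sigma$ and let $X_1,X_2,X_3$ be the three points of $\mathcal{C}\cap\pi^{-1}(p)$, with $J_k$ attached to $X_k$. For $i\neq k$ the equality $J_i(p)=J_k(p)$ forces $X_i=X_k$, which is impossible; and $J_i(p)=-J_k(p)$ forces $X_i=j(X_k)$, in which case $X_i\in\mathcal{C}\cap j(\mathcal{C})$ and hence $p\in\pi(\mathcal{C}\cap j(\mathcal{C}))$. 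By Proposition~\ref{jC} together with $\pi\circ j=\pi$ one has $\pi(\mathcal{C}\cap j(\mathcal{C}))=\pi(m_{02})\cup\pi(m_{13})\cup G_0=\hat{\C}_i\subseteq\Sigma$, contradicting $p\notin\Sigma$. Therefore $J_i(p)\neq\pm J_k(p)$ for every $p$ and every $i\neq k$, which is exactly strong independence; in particular the three structures are pairwise distinct.

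It remains to check that each $J_k$ is non-constant, and this is where I expect the only real friction. Writing $J_k(p)$ as the push-forward above, at $p$ it equals left multiplication by the imaginary unit $I_{p_k}=\IIm(p_k)/|\IIm(p_k)|$ of the corresponding pre-image $p_k$. If $J_k$ were constant, equal to multiplication by a fixed $I_0$, on a non-empty open set $V\subseteq\SF^4\setminus\Sigma$, then every $p\in V$ would be the image under $R$ (respectively $R^\vee$, $R_\R$) of a point of the fixed semi-slice $\C_{I_0}^+$, so $V$ would be contained in $R(\C_{I_0}^+)$ (respectively its analogue); but the latter is the image of a polynomial map of a real surface and hence has empty interior in $\SF^4$, a contradiction. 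Hence all three structures are non-constant, and Corollary~\ref{3ocs} follows. The delicate points to treat with care are the precise identification $J_i=-J_k\Leftrightarrow X_i=j(X_k)$ (that is, that $j$ is genuinely the fibrewise antipodal map) and the transversality statement that the tangency locus is exactly $G_2\subseteq\Sigma$, so that the three branches are honest local sections away from $\Sigma$; once these are in place the rest is formal.
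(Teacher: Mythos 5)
Your proposal is correct and takes essentially the same route as the paper: the three OCS's come from Theorem~\ref{discriminant} (each fibre over $\SF^4\setminus\Sigma$ meets $\mathcal{C}$ transversally in three distinct smooth points, giving three local graphs), and strong independence is deduced from Proposition~\ref{jC} via the observation that $\mathcal{C}\cap j(\mathcal{C})$ projects into $\hat{\C}_i\subseteq\Sigma$, exactly as in the paper's proof. Your extra argument for non-constancy (a constant $J_k$ on an open set would force that set into the image of a fixed semi-slice under a polynomial map, which has empty interior) is a point the paper leaves implicit, and it is sound.
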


\begin{proof}
Thanks to Theorem~\ref{discriminant}, the cubic $\mathcal{C}$ induces three
OCS's on $\SF^{4}$ outside $\Sigma$. Moreover, thanks to Proposition~\ref{jC}, outside of $m_{02}\cup m_{13}=\pi^{-1}(\hat{\C}_{i})$, the intersection between $\mathcal{C}$ and $j(\mathcal{C})$ is empty, meaning that
if $J$ is an OCS induced by $\mathcal{C}$, then $-J$ (that would be obtained by $j(\mathcal{C})$),
is none of the other 2 possible OCS's induced by the cubic.
\end{proof}

Thanks to~\cite[Theorem 22]{AAtwistor}, if $q=q_{1}+q_{2}j\in\SF^{4}\setminus\Sigma$ and if 
$$
\{x_{n}=\alpha_{n}+I_{n}\beta_{n}, \, n=1,2,3\}=\pi(\mathcal{R}^{-1}(\pi^{-1}(q))),
$$
then the three OCS's are defined to be
$$
\mathbb{J}_{n,q}(v)=I_{n}v,
$$
where $v\in T_q(\SF\setminus\Sigma)\simeq\HH$ and $I_{n}v$ denote the quaternionic multiplication. 
It is in general difficult to write an explicit expression of the OCS's $\mathbb{J}_{n}$, but at 
a fixed point $q$ the matrix representing $\mathbb{J}_{n,q}$ can be computed as in~\cite[Remark 1]{AAtwistor}, by knowing the $[s,u]$-coordinate of the element $\mathcal{R}^{-1}(\pi^{-1}(q))$.
%
%

\begin{remark}
As a consequence of Proposition~\ref{jC}, on the discriminant locus $\Sigma=Disc(\mathcal{C})$ the three OCS's of Corollary~\ref{3ocs}
collapse into two that are independent outside $\hat{\C}_{i}$ where they coincide with the two
constant OCS's $\pm\mathbb{J}_{i}$ defined as multiplication by $\pm i$.
\end{remark}

	The non-singular cubic 
	$$
	\mathcal{C}':\quad X_0X_3^2+X_3X_0^2+X_1X_2^2+X_2X_1^2=0
	$$ 
	studied in~\cite{APS, AS} is engineered so to have the ``twistor geometry [...] as simple as possible''  among all the non-singular cubics (see~\cite{AS}).
	This is done by maximising the number of twistor lines on a generic non-singular cubic surface $\mathcal{K}\subset\mathbb{CP}^3$ and the
	the size of the group of symmetries preserving $\mathcal{K}$ and the twistor fibration.
	Thanks to~\cite[Theorem 1]{APS}, a non-singular cubic surface contains at most five twistor lines. Moreover, in the same paper 
	the authors prove the following facts:
	\begin{itemize}
		\item if a smooth cubic contains $5$ twistor lines, then the image of these lines under $\pi$ must all lie
		on a round $2$-sphere in $\SF^4$;
		\item given a line $m\in\mathbb{CP}^3$ and five points $\{p_1,\dots, p_5\}\subset m$, then,
		after having identified $m$ with the Riemann sphere,
		there exists a non-singular cubic surface containing $m$, $j(m)$ and the five twistor lines joining
		$p_i$ and $j(p_i)$ if and only if no four of the points $p_i$ lie on a circle under this identification.
	\end{itemize}
	Now, in~\cite[Section 5]{APS} the authors show that the the most symmetrical arrangement of $5$ points on
	a $2$-sphere such that no four of which lie on a circle, is given by the set of vertices of a triangular bipyramid:
	two poles and three cubic roots of $-1$ lying on the equator. The symmetry group of these five points is $\mathbb{Z}_3\times\mathbb{Z}_2$
	and up to conformal transformation of the $2$-sphere, it is possible to assume that these points are the elements of $G_0$ listed in our Proposition~\ref{lines}.
	Then if the $2$-sphere $\hat{\C}$ containing $G_0$, lifts, by means of $\pi$, to the two lines $m$ and $j(m)$ and
	we ask for the surface $\mathcal{K}$ to contain both $m$ and $j(m)$, then its symmetry group is give by 
	$\mathbb{Z}_3\times\mathbb{Z}_2\times\mathbb{Z}_2$, where the action of the last $\mathbb{Z}_2$ swaps $m$ and $j(m)$.
	Up to conformal transformations, the unique non-singular cubic surface having symmetry group $\mathbb{Z}_3\times\mathbb{Z}_2\times\mathbb{Z}_2$ is exactly $\mathcal{C}'$.
	The discriminant locus of this non-singular surface topologically is $(T_1\cup T_2)/ G_0$, where $T_1$ and $T_2$ are two $2$-dimensional
	tori passing trough $G_0$.
	
	It is interesting to notice that our example carries on part of this geometry. In fact, Theorem~\ref{discriminant} states that the symmetry group of the cubic scroll 
	$\mathcal{C}$ contains $\mathbb{Z}_3\times\mathbb{Z}_2$ acting on the twistor lines but not the second $\mathbb{Z}_2$ part.
	In fact, while $j(\mathcal{C}')=\mathcal{C}'$, in our case the intersection $j(\mathcal{C})\cap\mathcal{C}$
	is non trivial. 


The construction that we presented and that develops the fascinating interplay between slice regularity and twistor geometry, still has some
aspects that need to be better understood.
Given a slice-polynomial function $P$ (or more generally a slice regular function), the relation between its singular set (i.e.: the set of points where its differential is not injective) and the discriminant locus of the surface containing the image of its extended twistor lift it is not completely clear.
In fact, given a slice regular function $f$, the sets of its wings and degenerate spheres are contained in the singular locus of $f$ but while 
degenerate spheres correspond to 
twistor fibres contained in the
image of $\mathcal{P}$, on the other hand the same is not true, in general, for wings.
Consider for instance the slice-polynomial function $P(q)=(q+j)*\ell_+=\ell_+v+\ell_-j$.
As shown in Example~\ref{esempicompleti} we have that $P|_{\C_{i}^{-}}=j$. Its extended twistor lift is equal to
$\mathcal{P}([s,u],v)=[s,u,sv-u,0]$ and its image lies in the hyperplane $\mathcal{H}:=\{X_{3}=0\}$. The hyperplane $\mathcal{H}$ was extensively studied in~\cite{AAtwistor}.
In particular $\#(\pi^{-1}(j)\cap\mathcal{H})=1$ while $\pi^{-1}(0)\cap\mathcal{H}\simeq \mathbb{CP}^{1}$. In fact the pre-image of $j$ by $\pi\circ\mathcal{P}$
consists of $([0,1],v)\subset\mathbb{CP}^{1}\times \C_{i}$, while the pre-image of $0$ consists of the surface $([1,v],v)\subset\mathbb{CP}^{1}\times \C_{i}$ for any $v\in\C_{i}$. From this point of view, $j$ is not
a twistor fiber because the projection of its pre-image on the first component is just the point $[0,1]$,
while the projection of the pre-image of $0$, $[1,v]$, covers $\mathbb{CP}^{1}$ minus a point.

%
%
%
%
%
%
%
%

\bibliographystyle{amsplain}

\end{document}